\makeatletter\@addtoreset{equation}{section}\makeatother
\newtheorem{thm}{Theorem}[section] 
\newtheorem{lem}[thm]{Lemma}  
\newtheorem{cor}[thm]{Corollary} 
\newtheorem{hyp}{Hypothesis}
\newtheorem{defn}[thm]{Definition}
\newtheoremstyle{named}{}{}{\itshape}{}{\bfseries}{.}{.5em}{\thmnote{#3's }#1}
\theoremstyle{named}
\theoremstyle{definition}
\newtheorem{rmk}{Remark}
\newcommand{\unstar}{\textbf{u}_n^*}
\newcommand{\spec}{\sigma}
\newcommand{\R}{\mathbb{R}}
\newcommand{\N}{\mathbb{N}}
\newcommand{\C}{\mathbb{C}}
\newcommand{\comment}[1]{}
\newcommand{\me}{\mathrm{e}}
\newcommand{\mbi}{\mathbf{i}}
\newcommand{\drm}{\mathrm{d}}
\title{Persistence of steady-states for dynamical systems on large networks}
\author[1]{Jason J. Bramburger}
\author[2,3]{Matt Holzer}
\author[2]{Jackson Williams} 
\affil[1]{\small Department of Mathematics and Statistics, Concordia University, Montr\'eal, QC, Canada}
\affil[2]{\small Department of Mathematical Sciences, George Mason University, Fairfax, VA, USA }
\affil[3]{\small Center for Mathematics and Artificial Intelligence (CMAI), George Mason University, Fairfax, VA, USA}
\date{}
\begin{document}

\maketitle

\begin{abstract}
The goal of this work is to identify steady-state solutions to dynamical systems defined on large, random families of networks. We do so by passing to a continuum limit where the adjacency matrix is replaced by a non-local operator with kernel called a graphon. This graphon equation is often more amenable to analysis and provides a single equation to study instead of the infinitely many variations of networks that lead to the limit.  Our work establishes a rigorous connection between steady-states of the continuum and network systems.   Precisely, we show that if the graphon equation has a steady-state solution whose linearization is invertible, there exists related steady-state solutions to the finite-dimensional networked dynamical system over all sufficiently large graphs converging to the graphon.  
The proof involves setting up a Newton--Kantorovich type iteration scheme which is shown to be a contraction on a suitable metric space. Interestingly, we show that the first iterate of our defined operator in general fails to be a contraction mapping, but the second iterate is proven to contract on the space. We extend our results to show that linear stability properties further carry over from the graphon system to the graph dynamical system. Our results are applied to twisted states in a Kuramoto model of coupled oscillators, steady-states in a model of neuronal network activity, and a Lotka--Volterra model of ecological interaction.        
\end{abstract}

\section{Introduction}\label{sec:intro}

In this paper we are concerned with steady-state solutions to nonlinear differential equations defined on networks. In particular, we study discrete reaction-diffusion-type systems of the form
\begin{equation} \label{eq:main}
    \frac{\drm u_i}{\drm t}= f(u_i)+\frac{1}{n}\sum_{j = 1}^n A_{ij}D(u_i,u_j), \qquad i = 1,\dots, n. 
 \end{equation}
Here $u_i = u_i(t)$ denotes the dynamics of the $i$th component (or agent, node, species, depending upon the application) of the system, $f(u_i)$ describes the component-specific internal kinetics, and $D(u_i,u_j)$ details the manner in which interaction between individual components affects the dynamics. Applications giving rise to systems of the form \eqref{eq:main} abound. A famous and motivating example is the Kuramoto model for synchronization of coupled oscillators \cite{kuramoto84,SyncBasin}, while other examples arise in areas such as power networks \cite{Braess1}, neuroscience \cite{amari77,ermentrout98}, biological pattern formation \cite{othmer71}, and ecology \cite{volterra39}, to name only a few.  
 
Essential to many of these examples is that the individual components may not be coupled identically, but have interaction patterns that can be described by a network. In \eqref{eq:main} such interaction networks are described by the matrix $A = [A_{ij}]_{1 \leq i,j \leq n}$. If $A_{ij}\neq 0$ then the $i$th and $j$th components interact with a strength given by the value of $A_{ij}$. Research related to \eqref{eq:main} typically requires some regularity, simplicity, or symmetry in the matrix $A$ for analytical results to be obtainable. When this structure is lacking -- as is the case for networks described by random graphs -- analysis  of differential equations like \eqref{eq:main} is challenging even at the level of computing the existence and stability of steady-state equilibrium solutions.

To better understand dynamical systems of the form \eqref{eq:main} on very large networks, i.e. $n\gg 1$, one may formally let $n \to \infty$ and attempt to analyze the resulting limiting system to gain insight into the macroscopic behavior of \eqref{eq:main}. The result of taking $n \to \infty$ in \eqref{eq:main} is non-local models taking the form
\begin{equation} \label{eq:main2}
    \frac{\partial u}{\partial t}(t,x)= f(u(t,x)) + \int_0^1 W(x,y)D(u(t,x),u(t,y))\, \drm y,  
\end{equation}
which have been widely studied to gain insight into the role that the network topology plays in the system dynamics when the network is large. Here $u(t,x)$ is a function of the (normalized) latent space $[0,1]$ and $W(x,y)$ is a symmetric, almost-everywhere continuous function that represents the probability of a connection between a node located at $x$ and one at $y$. The formulation in \eqref{eq:main2} is often employed to approximate the dynamics of \eqref{eq:main} when $A$ describes a network.  For example, $W(x,y)=p$ represents an Erd\H{o}s-R\'eyni random graph where the probability of connections between nodes is a fixed constant $p \in [0,1]$, while if $W = W(|x-y|)$ then the probability of connection between nodes is determined by the distance between the nodes in the latent space and $W$ is an abstraction of a ring network. It is often the case that the non-local equation \eqref{eq:main2} is more amenable to analysis than the discrete version \eqref{eq:main} as it provides a single deterministic model against infinitely-many random discrete systems of different sizes. Our goal in the current study is to transfer existence and stability results for steady-state solutions of the limiting non-local model \eqref{eq:main2} back down to the network models \eqref{eq:main} that are large enough to be considered close to the limit. The result of our analysis of \eqref{eq:main2} provides information regarding important dynamical features of infinitely many systems of the form \eqref{eq:main} with large $n$ and varying network topologies.  

The function $W(x,y)$ in \eqref{eq:main2} is known as a {\em graphon} and has been derived as a natural graph limit for sequences of graphs where convergence is measured with respect a metric known as the cut norm; see \cite{borgs08,borgs11,lovasz06,lovasz12} and our brief review presented in Section~\ref{sec:Graphons}. The language and tools from graphon theory are germane to our work herein since probabilistic statements regarding the convergence of sequences of random graphs to their graphon limits are now well-understood \cite{lovasz06,tropp12,vizuete2021laplacian}. Most importantly, while convergence in the cut norm can be difficult to interpret mathematically, it captures the intuitive essence of growing graph structures that is observed using pixel plots of the associated adjacency matrices, as illustrated in Figure~\ref{fig:ER}. This is in contrast to the traditional vector norms that measure element-wise differences in matrices which cannot compare adjacency matrices of different sizes nor remain small when only a single edge is added or subtracted from a very large graph.

The emergence of the non-local equation \eqref{eq:main2} featuring the graphon $W(x,y)$ as a continuum limit of \eqref{eq:main} has been a focus of research for the past decade. Notably, closeness of the solutions to \eqref{eq:main} and \eqref{eq:main2} as initial value problems was proven in \cite{medvedev14}. The goal of the present work is complementary in the sense that we will establish that stable steady-states for the non-local model \eqref{eq:main2} persist as stable steady-states in the discrete model \eqref{eq:main} if $A$ and $W(x,y)$ are close as operators in an appropriate sense to be made precise later. The use of graphons in the analysis of network dynamical systems has also grown in the past decade.  We note contributions to the study of coupled oscillators \cite{chiba19,Nonex,wrightex},  mean field games \cite{caines21,carmona22,parise23}, pattern formation \cite{bramburger2021pattern}, epidemics \cite{delmas22}, control theory \cite{gao19}, power networks \cite{kuehn19}, opinion dynamics \cite{ayi21,bonnet22} and Kuramoto models with higher order interactions \cite{bick22}.  We remark that in many of these works the focus is on the analysis of the system \eqref{eq:main2}, while connections to the discrete system \eqref{eq:main} are provided through numerical investigations. Within the context of mean field games, rigorous results connecting the existence of Nash equilibria in the graphon limit to approximate Nash equilibria in the finite-dimensional graph case have been obtained \cite{carmona22,parise23}. 

\begin{figure}[t] 
    \centering
    \includegraphics[width = \textwidth]{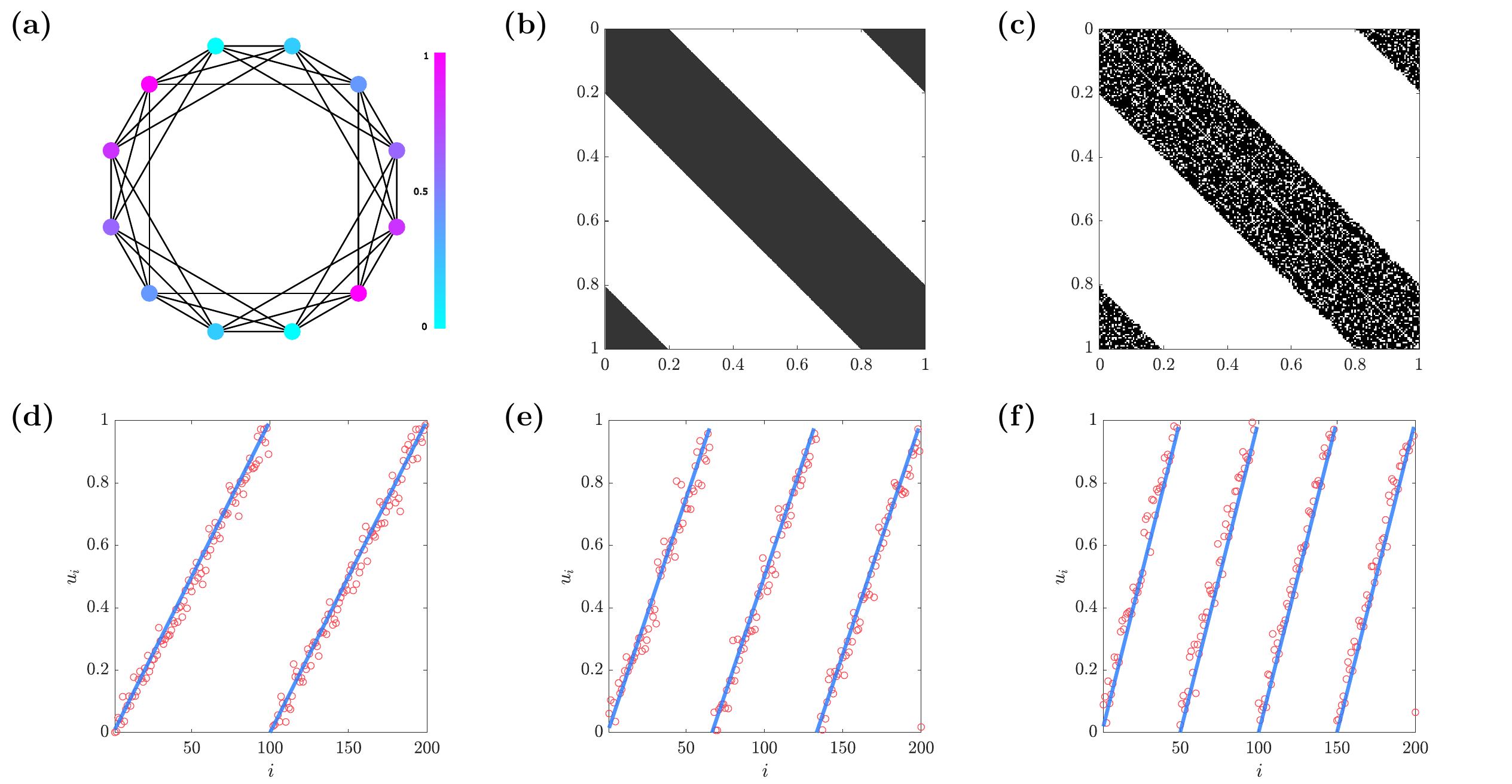}
\caption{(a) A cartoon of a 2-twisted state over a ring network with next-nearest-neighbor connections. (b) Contour plot of the small-world graphon \eqref{eq:IntroGraphon} with $\alpha = 0.2$. (c) the pixel plot of the adjacency matrix of a random network built from the graphon according to \eqref{eq:IntroProb}. Twisted states for the Kuramoto model of coupled oscillators in both the graphon (blue solid line) and random graph model (red dots) are provided with (d) $m = 2$, (e) $m = 3$, and (f) $m = 4$ twists. 
}
\label{twisted_state_graphs}
\end{figure}  

To illustrate our main results, let us consider the Kuramoto model for coupled oscillators \cite{kuramoto84}. In this case the reaction and interaction functions in \eqref{eq:main2} assume the forms  
\begin{equation}
     f(u)=0, \quad D(u(x),u(y))=\sin(2\pi(u(y)-u(x))).
\end{equation}
A special type of solution to the Kuramoto system are {\em $m$-twisted states}, taking the form $u(x,t) = m x$ (typically understood modulo 1 as $u$ represents a phase in this model) for an $m \in \mathbb{Z}$. For ring graphons, the existence and stability of twisted states in \eqref{eq:main2} was established in \cite{wrightex,SyncBasin}. Thus, a question one can ask is whether these twisted state solutions persist as solutions to the discrete system \eqref{eq:main} with a large number of vertices $n \gg 1$ and the $A_{ij}=A_{ji}\in \{ 0,1\}$ being independent random variables generated by the limiting graphon according to
\begin{equation}\label{eq:IntroProb}
    \mathbb{P}\left(A_{ij}=1\right)=W\left(\frac{i-1}{n},\frac{j-1}{n}\right).   
\end{equation}
Our work herein answers this question in the affirmative with high probability. Figure~\ref{twisted_state_graphs} provides numerical illustrations of our results using the same small-world graphon employed in the study \cite{SyncBasin}: 
\begin{equation}\label{eq:IntroGraphon}
    W(x,y) = \begin{cases}
        \frac{1}{2\pi \alpha}, & \min\{|x - y|,1 - |x - y|\} \leq \alpha, \\
        0, & \mathrm{otherwise}.
    \end{cases}
\end{equation}
Moreover, we show that stability properties are also inherited by the persisting solutions in the discrete system. One further arrives at the conclusion that these steady-states are robust with respect to large scale re-wiring of the network structure, so long as the graphs are sufficiently close to the limiting graphon.

Our proof of the persistence of solutions down from the continuum limit follows from an application of the contraction mapping theorem to a Newton-type operator. Central to this task is the identification of an appropriate Banach space on which this operator will act. The Lebesgue spaces, $L^p(\mathbb{R})$, are natural candidates but turn out to be insufficient for our purposes. For any $p \in [1,\infty)$ these spaces are not closed under pointwise multiplication, and so we cannot prove that the right-hand-side of \eqref{eq:main2} is well-defined as an operator on $L^p$ for $p < \infty$. For this reason we pivot to spaces of piece-wise continuous functions equipped with the supremum, or $L^\infty$, norm (see the definition of $X_n$ in Section~\ref{sec:proof1}).  However, our candidate operator is not, generally speaking, a contraction in these spaces.  This can be traced to the fact that the cut-norm cannot be used to control the  $L^\infty\to L^\infty$ operator norm of graphon adjacency operators $ v\mapsto \int_0^1 W(x,y)v(y)\mathrm{d}y$, which are critical to our analysis. Remarkably, it turns out that the second iterate of this operator is a contraction. This is a somewhat unusual property, but we demonstrate that it arises naturally from the definition of the cut-norm and the form of the integral operators arising in the linearization of \eqref{eq:main2} near the steady-state solution. In this way we provide unique analytical methods to arrive at our results in this manuscript.
 
This paper is organized as follows. In Section~\ref{sec:Graphons} we review graphons and describe how they can be used to generate families of discrete graphs which converge to the graphon in the cut norm as the number of vertices increases without bound. Our main results are presented in Section~\ref{sec:mainresults}. In Section~\ref{sec:proof1} we prove our first main result which states that a solution to the continuum problem defined with a graphon persists as a solution to the discrete problem on sufficiently large graphs. Then, in Section~\ref{sec:proof2}, we prove our second result which states that these nearby solutions maintain the stability of the original solution to the continuum model. Specifically, that their eigenvalues lie arbitrarily close to those of the linearization for the continuous problem when the underlying network is sufficiently large. In Section~\ref{sec:examples} we consider several examples where our theory applies. These examples include the aforementioned Kuramoto coupled oscillator model, as well as the Wilson-Cowan model of neuron activity, and Lotka-Volterra models describing ecological competition/cooperation. Finally, we conclude with Section~\ref{sec:discussion}, a discussion on future research related to our results and examples.

\section{Graphs and Graphons}\label{sec:Graphons}

In this paper we demonstrate a useful application of graphons as a tool to analyze dynamical systems on large graphs. Throughout this section we aim to build an understanding of graphons as limiting objects for sequences of graphs on $n$ vertices as $n\to \infty$. We show how such sequences can be generated both deterministically and randomly by starting with a graphon and building a sequence for which it is the limit. What follows is only a limited review of graphons, while the expository works \cite{janson2010graphons,lovasz12} are recommended for readers who would like to understand them further.

\subsection{Graphons and their norms}\label{sec:GraphonIntro}

A \emph{graphon} $W$ is a symmetric, Lebesgue-measurable function mapping $[0,1]^2$ to $[0,1]$. Boundedness of $W$ guarantees that $W \in L^p = L^p([0,1])$ for every $p \in [1,\infty]$, however the space of graphons is typically endowed with a more appropriate metric called the cut norm \cite{borgs08,borgs11,frieze99,lovasz06}, defined by  
\begin{equation}\label{cutnorm}
    \|W\|_\square = \sup_{S,T} \bigg| \int_{S\times T} W(x,y) \, \drm x \drm y \bigg|
\end{equation}
where $S$ and $T$ are measurable subsets of $[0,1]$. The cut norm is a weaker norm than the $L^p$ norms in the sense that $\|W\|_\square \leq \|W\|_p$ for all $p \in [1,\infty]$. As we will see below, the cut norm plays a critical role in graph limit theory as it can be used to interpret the geometric properties of graphs that are not captured well by the $L^p$ norms.

On a technical level, the cut norm has been shown to be equivalent to various operator norms.  For our purposes here, we will mainly use the following version
\begin{equation*}
    \|W\|_{\square,2} =\sup_{\|f\|_\infty \leq 1,\|g\|_\infty \leq 1} \bigg|\int_0^1 \int_0^1 W(x,y)f(x)g(y)\mathrm{d}y\mathrm{d} x \bigg|,
\end{equation*}
which is equivalent to the cut-norm in (\ref{cutnorm}) in the sense that $\|W\|_\square \leq \|W\|_{\square,2} \leq 4 \|W\|_\square$; see Appendix E of \cite{janson2010graphons} for reference.  We also note that the cut norm is often expressed in terms of the operator norm of $T_W:v\to \int_0^1 W(x,y)v(y)\mathrm{d}y$, which is the graphon analog of how the adjacency matrix acts on a vector. Precisely, it holds that (see \cite[Lemma E.6]{janson2010graphons}),  
\begin{equation} \label{eq:TWcutnormbounds}
    \|W\|_{\square,2} \leq \|T_W\|_{L^p\to L^q} \leq \sqrt{2} \|W\|_{\square,2}^{\mathrm{min}(1-1/p,1/q)} 
\end{equation}
for all $p,q \in [1,\infty]$. To reiterate the discussion from the introduction we see that when $p=q=\infty$, the $L^\infty\to L^\infty$ operator norm of $T_W$ cannot be controlled by the cut-norm.

\subsection{Constructing finite graphs from graphons}\label{sec:GraphonExamples}

The goal of this subsection is to review how  graphons can be used as a tool to generate both deterministic and random finite graphs. To start, fix any $n\geq 1$ and partition the interval $[0,1]$ into sub-intervals using the points $x_i=\frac{i-1}{n}$ with $i = 1,\dots,n$. Then, a graphon $W$ leads to a deterministic graph with $n$ vertices represented by the weighted adjacency matrix $A = [A_{i,j}]_{1 \leq i,j\leq n}$ with (undirected) edge weights
\begin{equation}\label{DetAdj}
    A_{i,j} = W(x_i,x_j), \qquad i,j = 1,\dots, n.
\end{equation}
Alternatively, we may construct a random graph from $W$ by assigning edges at random with the probability of connection between vertex $i$ and $j$ given by $W(x_i,x_j)$. Precisely, the associated adjacency matrix $A = [\xi_{i,j}]_{1 \leq i,j\leq n}$ has elements that are independent Bernoulli random variables $\xi_{i,j} = \xi_{j,i}$ with distribution 
\begin{equation}\label{RandomGraph}
    \mathbb{P}(\xi_{i,j} = 1) = 1 - \mathbb{P}(\xi_{i,j} = 0) = W(x_i,x_j), \qquad \forall i > j
\end{equation}
and the diagonal elements fixed as $\xi_{i,i} = 0$. Notice that the resulting random graph is not weighted as the adjacency matrix only encodes whether a connection is present or not.

While our results will apply equally to both deterministic and random graphs, it will be the latter that will be the primary focus of our applications. Several classes of popular random graph models can be described in terms of graphons. The following list enumerates some of these models.

\begin{figure}[t]
    \centering
    \includegraphics[width = \textwidth]{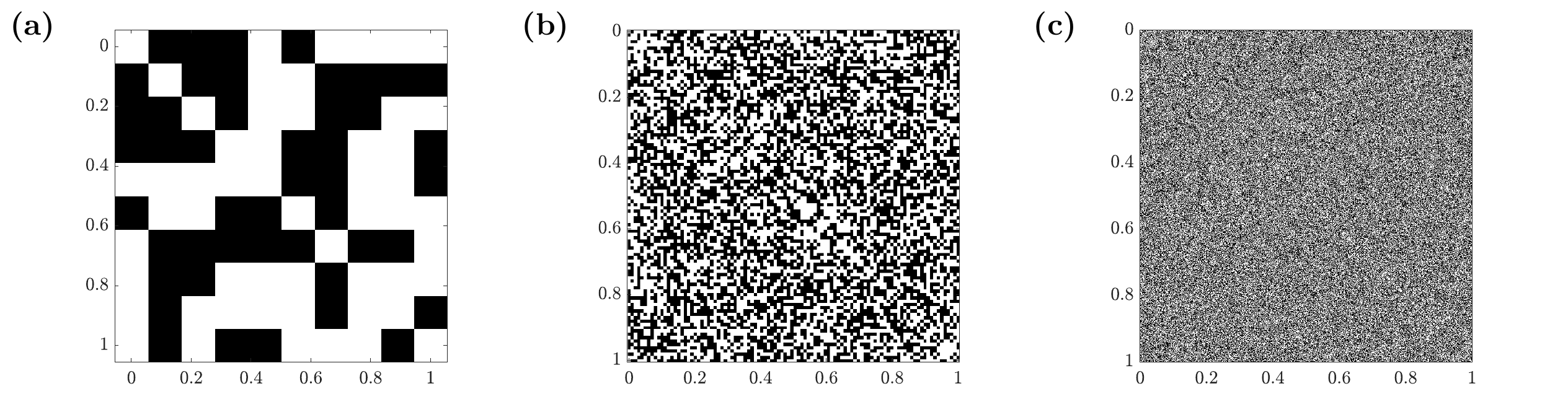}
    \caption{Pixel plots of Erd\H{o}s-R\'enyi random graphs on $n = 10, 100,$ and $1000$ vertices generated by the graphon $W(x,y) = 1/2$ with black representing an edge and white representing no edge. Visually they appear to approach a solid gray state of value 1/2, which is the correct intuition for the cut norm in this case, but not the standard $L^p$ norms.}
    \label{fig:ER}
\end{figure}
  
\begin{enumerate}
    \item {\bf Erd\H{o}s-R\'eyni} networks are perhaps the simplest random graph model, where edges are assigned independently with some fixed probability $p \in [0,1]$. Such models can be generated by a constant graphon $W(x,y)=p$. Figure~\ref{fig:ER} provides pixel plots of random graphs with $n = 10,100$, and $1000$ vertices generated using $W(x,y) = 1/2$. They visually approach a near solid gray state, providing the intuition for convergence in the cut norm, as we will discuss in the next subsection.   
    \item {\bf Ring} networks correspond to periodic arrays of nodes where edges are assigned with a probability that depends only on the distance between nodes. A ring graphon is defined by a piecewise continuous, $1$-periodic function $R:[0,1] \to [0,1]$ such that $W(x,y)=R(|x-y|)$ for all $x,y \in [0,1]$. Ring graphons have a Fourier series representation of the form 
    \begin{equation}
        W(x,y)=\sum_{k\in\mathbb{Z}} c_k \me^{2\pi \mbi k(x-y)}, \qquad c_k = c_{-k} \in \R.
    \end{equation}
    An important sub-class of ring networks are {\bf Watts-Strogatz} or {\bf small-world} networks which satisfy 
    \begin{equation}\label{SmallWorldGraphon}
    W(x,y) = \begin{cases}
        p & \text{for } |x - y|\leq \alpha \text{ or } 1-|x - y| \leq \alpha\\
        q & \text{otherwise}
    \end{cases}
    \end{equation}
    with parameters $\alpha,p,q \in [0,1]$. The corresponding Fourier series of $W$ has coefficients 
    \begin{equation}
        c_k = \begin{cases}
            2\alpha p + (1 - 2\alpha)q & k = 0,\\
            \bigg(\frac{p - q}{\pi k}\bigg)\sin(2\pi k \alpha) & k \neq 0,
        \end{cases} 
    \end{equation}
    which will be of use in our examples in Section~\ref{sec:examples}.
    \item A {\bf bipartite} network seeks to divide vertices into two groups, with probability of connections between vertices in different groups given by $p \in [0,1]$ and no intragroup connections. The corresponding graphon divides $[0,1]$ into $[0,\alpha]$ and $(\alpha,1]$, for some $\alpha \in (0,1)$, and takes the form
    \begin{equation}
        W(x,y) = \begin{cases}
            p & \mathrm{if}\ \min\{x,y\}\leq \alpha,\ \max\{x,y\} > \alpha, \\
            0 & \mathrm{otherwise}.
        \end{cases} \label{eq:bipartite}
    \end{equation}
    That is, vertices $i$ and $j$ are connected with probability $p$ if $x_i \in [0,\alpha]$ and $x_j \in (\alpha,1]$, while no edge is present when $x_i$ and $x_j$ both belong to $[0,\alpha]$ or $(\alpha,1]$. One may extrapolate and define {\bf multipartite} networks by partitioning the interval $[0,1]$ into multiple subintervals and providing probabilities of connections between distinct subintervals.  
\end{enumerate}

\subsection{Step graphons}\label{sec:StepGraphons}

\begin{figure}[t]
    \centering
    \begin{minipage}{.2\textwidth}
        \centering        \includegraphics[width=\textwidth]{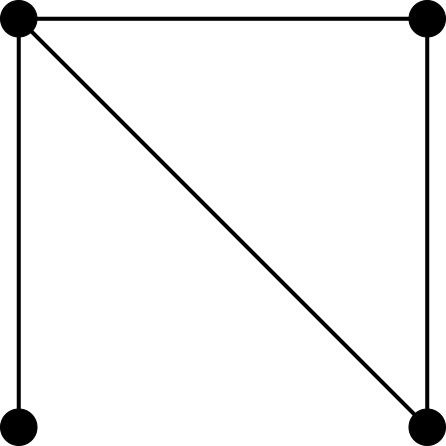}
    \end{minipage}    
    \begin{minipage}{0.32\textwidth}
        \centering
        \[\begin{pmatrix}
            0 & 1 & 1 & 1\\
            1 & 0 & 1 & 0\\
            1 & 1 & 0 & 0\\
            1 & 0 & 0 & 0
        \end{pmatrix}\]
    \end{minipage}
    \begin{minipage}{0.28\textwidth}
        \centering
        \includegraphics[width = \textwidth]{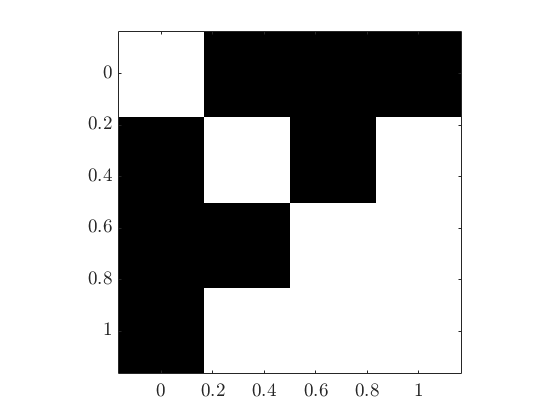}
    \end{minipage}
    \caption{A graph on 4 vertices (left) can be encoded as an adjacency matrix (center) which is used to define a step graphon (right). This step graphon representation is really a pixel plot of the adjaceny matrix, showing values of 0 in white and 1 in black.}
    \label{fig:Ex}
\end{figure}

In order to facilitate the forthcoming analysis, we need a way to compare discrete finite graphs with a continuous graphon. As we will review in this subsection, the pixel plot of an adjacency matrix naturally leads to a step-function representation of a finite graph as a graphon. This means that graphs can be represented in three equivalent ways: 1) geometrically as a collection of vertices and edges, 2) through the adjacency matrix, and 3) as a step-graphon. We refer the reader to Figure~\ref{fig:Ex} which provides these three equivalent presentations of a finite graph using a simple example. 

To be more precise, for a graph on $n$ vertices its adjacency matrix $A = [A_{i,j}]_{1 \leq i,j \leq n}$ can be used to make a graphon $W_n$ which is a step function over the domain $I_n\times I_n$ where $I_n = \{[(i-1)/n,i/n): i = 1,2,\dots,n\}$. Its values are defined as 
\begin{equation}\label{StepGraphon}
    W_n(x,y) = A_{i,j}\ \text{ for } \  (x,y)\in [(i-1)/n,i/n)\times  [(j-1)/n,j/n).
\end{equation}
By expressing finite graphs as step-graphons one is then able to compare their distances in the cut or operator norms. In particular, our main results in the following section require a sequence of step graphons converging to a given graphon in the cut norm. Let us now briefly comment on when this hypothesis can be guaranteed to hold using both the deterministic and random adjacency matrices coming from a single graphon. 

We begin with the deterministic case. For a graphon $W$, we have shown how one can generate a weighted deterministic graph on $n\geq 1$ vertices by sampling the graphon via \eqref{DetAdj}. The corresponding step graphon, here denoted $W_n^d$, is then constructed as in \eqref{StepGraphon}. Thus, $W_n^d$ is simply a step function approximation of $W$. If $W$ is almost everywhere continuous (as in all examples herein), the dominated convergence theorem gives that 
\begin{equation}
    \lim_{n \to \infty}\|W_n^d - W\|_1 \to 0.
\end{equation}
As presented in Section~\ref{sec:GraphonIntro} we have that $\|W\|_\square \leq \|W\|_1$ for all graphons $W$, and so we find that $\|W_n^d - W\|_\square \to 0$ as $n \to \infty$ for the case of deterministic weighted graphs generated by an almost everywhere continuous graphon $W$.

The case of random graphs is not nearly as straightforward, but we will again demonstrate that convergence of the step graphons to the generating graphon in the cut norm can be obtained {\em with high probability}\footnote{ {\em with high probability} means that the probability of the event occurring goes to $1$ as $n\to\infty$}. Following \eqref{RandomGraph}, we generate a random graph on $n$ vertices and denote $W_n^r$ to be its corresponding step graphon, again via \eqref{StepGraphon}. Notice now that $W_n^r$ only takes on values of $0$ or $1$, but nothing in between, and so in general we do not have pointwise convergence of $W_n^r$ to $W$ as $n \to \infty$. Nonetheless, \cite[Lemma~10.16]{lovasz12} provides the following useful result.

\begin{lem}[\cite{lovasz12}]\label{lem:RandConvergence}
    Let $n \geq 1$ and let $W$ be a graphon. Then with probability at least $1 - \mathrm{exp}(-n/\log(n))$ we have 
    \begin{equation}\label{CutConvergence}
        \|W_n^r - W\|_\square \leq \frac{22}{\sqrt{\log(n)}}.
    \end{equation}
\end{lem}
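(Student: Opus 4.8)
The plan is to decompose the error into a purely \emph{edge-randomness} contribution and a deterministic \emph{sampling/approximation} contribution, estimate each, and then promote these estimates to a high-probability bound via a bounded-differences (Azuma--McDiarmid) argument. Write $W_n^d$ for the deterministic step graphon with block values $W(x_i,x_j)$ as in \eqref{DetAdj}, so that by the triangle inequality
$\|W_n^r - W\|_\square \le \|W_n^r - W_n^d\|_\square + \|W_n^d - W\|_\square$.
The second term is deterministic and, for the almost-everywhere continuous graphons considered throughout the paper, tends to $0$ exactly as discussed above for $W_n^d$; for a \emph{general} measurable graphon one instead works with the cut metric $\delta_\square$ and the node-randomized model, and it is there that the rate $1/\sqrt{\log n}$ is born. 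The first term carries all of the Bernoulli randomness of the $\xi_{i,j}$ and, as I explain next, is in fact of smaller order $O(n^{-1/2})$.

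First I would reduce the cut norm of a step-graphon difference to a finite combinatorial maximum. If $U$ is a step function with constant value $U_{i,j}$ on $[(i-1)/n,i/n)\times[(j-1)/n,j/n)$, then for measurable $S,T$ one computes $\int_{S\times T}U = \tfrac1{n^2}\sum_{i,j}U_{i,j}\,s_i t_j$ with $s_i = n|S\cap I_i|\in[0,1]$ and $t_j = n|T\cap I_j|\in[0,1]$; this bilinear form over the cube $[0,1]^n\times[0,1]^n$ is maximized at a vertex, so $\|U\|_\square = \tfrac1{n^2}\max_{A,B\subseteq\{1,\dots,n\}}\big|\sum_{i\in A}\sum_{j\in B}U_{i,j}\big|$. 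Applying this with $U = W_n^r - W_n^d$, whose off-diagonal block values $\eta_{i,j}=\xi_{i,j}-W(x_i,x_j)$ are independent (over $i<j$), mean zero, and bounded in $[-1,1]$ (the $n$ diagonal blocks contribute only $O(1/n)$ and are discarded), a Hoeffding bound gives $\mathbb{P}\big(|\sum_{i\in A,j\in B}\eta_{i,j}|>t\big)\le 2\exp(-ct^2/n^2)$ for each fixed pair $(A,B)$, and a union bound over the $4^n$ pairs shows $\|W_n^r-W_n^d\|_\square = O(n^{-1/2})$ off an event of probability $\le\exp(-c'n)$. In particular this contribution is of smaller order than $22/\sqrt{\log n}$, and its exceptional probability is negligible next to $\exp(-n/\log n)$.

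The crux is producing the $1/\sqrt{\log n}$ rate for the remaining term. For a merely measurable $W$ one passes to $\delta_\square$ and the node-randomized graphon $W[\mathbf{U}]$: by the weak regularity lemma of Frieze--Kannan, $W$ lies within $O(1/\sqrt{\log k})$ in cut norm of a step function with $k$ parts, and concentration of the empirical distribution of $\mathbf{U}$ over those $k$ parts controls $\delta_\square(W[\mathbf{U}],W)$ up to an additional $O(\sqrt{k/n})$; balancing $k$ against $n$ yields $\mathbb{E}\,\delta_\square(W[\mathbf{U}],W)=O(1/\sqrt{\log n})$, which is precisely the graphon sampling lemma underlying \cite[Lemma~10.16]{lovasz12}. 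The appearance of $\log n$ rather than a power of $n$ is intrinsic — it mirrors the known (un)improvability of the weak regularity lemma — and this is the one genuinely non-elementary ingredient; for the a.e. continuous graphons in the applications it is replaced by the soft estimate $\|W_n^d-W\|_\square\le\|W_n^d-W\|_1\to 0$, at the cost of an explicit rate.

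Finally I would run the concentration step. Viewing $f:=\|W_n^r-W\|_\square$ (or $\delta_\square(W_n^r,W)$) as a function of the independent coordinates — the $n$ node labels and the $\binom n2$ edge indicators — relabelling one node perturbs the step graphon on one row and one column of blocks, a set of measure $O(1/n)$, changing $f$ by $O(1/n)$, while flipping one edge changes $f$ by $O(1/n^2)$; hence $\sum(\text{bounded differences})^2 = O(1/n)$ and McDiarmid's inequality gives $\mathbb{P}(f>\mathbb{E}f+t)\le\exp(-cnt^2)$. Combining the previous two steps, $\mathbb{E}f\le C_0/\sqrt{\log n}$, so choosing $t$ a suitable multiple of $1/\sqrt{\log n}$ with $\mathbb{E}f+t\le 22/\sqrt{\log n}$ produces a failure probability $\le\exp(-cn/\log n)$, matching the stated bound after adjusting constants. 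The main obstacle is the third step — the logarithmic sampling rate — everything else being a routine union-bound-plus-concentration argument.
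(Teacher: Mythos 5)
The paper does not actually prove this lemma --- it is imported verbatim as \cite[Lemma~10.16]{lovasz12} --- so your proposal is best judged against the standard proof in that reference, and it is a faithful reconstruction of it: the split $\|W_n^r-W\|_\square\le\|W_n^r-W_n^d\|_\square+\|W_n^d-W\|_\square$, the reduction of the cut norm of a step function to a maximum over the $4^n$ pairs of vertex subsets followed by Hoeffding and a union bound (giving $O(n^{-1/2})$ for the edge randomness), the weak-regularity-based sampling lemma producing the intrinsic $1/\sqrt{\log n}$ rate, and the Azuma/McDiarmid step yielding the $\exp(-cn/\log n)$ failure probability are exactly the ingredients of Lovász's argument. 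You also correctly flag the one genuine subtlety that the paper glosses over: Lovász's Lemma~10.16 bounds the cut \emph{distance} $\delta_\square$ for the node-randomized sample $W[\mathbf{U}]$, whereas the statement here is phrased for the cut \emph{norm} with the deterministic grid points $x_i=(i-1)/n$; for a general measurable $W$ the deterministic term $\|W_n^d-W\|_\square$ need not enjoy the $1/\sqrt{\log n}$ rate (or even converge) without the a.e.\ continuity used elsewhere in Section~\ref{sec:StepGraphons}, and your remark that one must either pass to $\delta_\square$ or restrict to the a.e.\ continuous graphons of the applications is the honest way to close that gap. Minor quibbles only: in the Hoeffding step the symmetry $\eta_{ji}=\eta_{ij}$ means each independent variable appears with coefficient up to $2$ in $\sum_{i\in A,j\in B}\eta_{ij}$, which affects only constants; and in the paper's setup the node labels are not random, so the McDiarmid step needs only the $\binom{n}{2}$ edge coordinates unless one is proving the $\delta_\square$ version. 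None of this invalidates the argument.
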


Thus we see from the above lemma that $W_n^r$ converges to $W$ in probability and so $\|W_n^r - W\|_\square \to 0$ as $n \to \infty$ can be expected to hold with high probability for random graphs. Although the cut norm can be unintuitive on first glance, Figure~\ref{fig:ER} provides random realizations of Erd\H{o}s-R\'enyi random graphs with $n = 10, 100$, and $1000$ vertices that help to visualize convergence in the cut norm. For large $n$ the blending of black (1s) and white (0s) plateaus of the step graphon begin to resemble the limiting graphon $W(x,y) = 1/2$ that would appear uniformly gray. 

Another measure of convergence of graphon sequences is in the degree of the vertices of the graph. The degree is an important centrality measure which is the sum of all edge weights incident to a vertex. In the language of graphons, the degree of a graphon is a function $d_W:[0,1] \to [0,1]$, given by
\begin{equation}
    d_W(x) = \int_0^1 W(x,y)\drm y,
\end{equation}
for all $x \in [0,1]$. Our hypotheses below will require that the degree functions of sequences of step graphons converge uniformly to the degree of a graphon, and so we briefly comment on when this can be verified in practice. We begin by providing the result \cite[Lemma~3]{vizuete2021laplacian} and then discuss its ramifications for our assumptions presented in the next section.

\begin{lem}[\cite{vizuete2021laplacian}]\label{lem:DegreeConvergence}
    Let $W$ be a graphon such that $d_W(x) \geq d_0 > 0$ for all $x \in [0,1]$. There exists an $N\geq 1$ such that for all $n \geq N$, with probability at least $1 - \nu$ we have
    \begin{equation}\label{DegreeConvergence}
        \|d_{W_n^r} - d_{W_n^d}\|_\infty \leq \sqrt{\frac{\log(2n/\nu)}{nd_0}}.
    \end{equation}
\end{lem}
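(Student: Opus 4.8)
The plan is a concentration-of-measure estimate applied row by row to the random adjacency matrix, combined with a union bound over the $n$ rows; the graphon hypotheses enter only through the elementary facts that everything in sight is $[0,1]$-valued and that $0 < d_0 \le 1$. The starting point is that both $d_{W_n^r}$ and $d_{W_n^d}$ are piecewise constant on the partition $I_n$: from \eqref{StepGraphon} and the definition of the degree function, for $x \in [(i-1)/n, i/n)$ one has $d_{W_n^r}(x) = \frac1n\sum_{j=1}^n \xi_{i,j}$ and $d_{W_n^d}(x) = \frac1n\sum_{j=1}^n W(x_i,x_j)$, hence
\[
  \|d_{W_n^r} - d_{W_n^d}\|_\infty \;=\; \max_{1\le i\le n}\,\left|\frac1n\sum_{j=1}^n\bigl(\xi_{i,j} - W(x_i,x_j)\bigr)\right|.
\]

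First I would fix a row $i$ and separate the diagonal term: since $\xi_{i,i}=0$ deterministically while $W(x_i,x_i)\in[0,1]$, the $j=i$ summand is the constant $-W(x_i,x_i)$, which contributes at most $1/n$ in absolute value, whereas the summands $\xi_{i,j}-W(x_i,x_j)$ with $j\ne i$ are independent (they are drawn from the independent off-diagonal family defining the random graph), mean zero, and take values in an interval of length one. Hoeffding's inequality therefore gives, for every $t>0$,
\[
  \mathbb{P}\!\left(\left|\frac1n\sum_{j\ne i}\bigl(\xi_{i,j}-W(x_i,x_j)\bigr)\right|\ge t\right)\;\le\;2\exp(-2nt^2).
\]
(One may instead use a Bernstein/Chernoff bound with $\mathrm{Var}(\xi_{i,j})=W(x_i,x_j)(1-W(x_i,x_j))$; that is the natural route if one wants the lower bound $d_W\ge d_0$ to enter the estimate directly, at the cost of more delicate bookkeeping.) Taking a union bound over $i=1,\dots,n$ and solving $2n\exp(-2nt^2)=\nu$ for $t=t_n:=\sqrt{\log(2n/\nu)/(2n)}$ shows that, with probability at least $1-\nu$,
\[
  \|d_{W_n^r}-d_{W_n^d}\|_\infty \;\le\; t_n + \frac1n.
\]

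To reach the stated form, note that the degree function of a graphon is $[0,1]$-valued, so $0<d_0\le 1<2$ and hence $t_n\le\sqrt{\log(2n/\nu)/(nd_0)}$; moreover the slack $\sqrt{\log(2n/\nu)/(nd_0)}-t_n$ equals $\sqrt{\log(2n/\nu)/n}\,\bigl(1/\sqrt{d_0}-1/\sqrt{2}\bigr)$, a quantity of order $\sqrt{\log(n)/n}$ with a strictly positive coefficient, which dominates the $1/n$ correction once $n$ exceeds a threshold depending only on $d_0$. For such $n$ we obtain $t_n+1/n\le\sqrt{\log(2n/\nu)/(nd_0)}$, which is the claim, with $N$ taken to be that threshold.

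I do not anticipate a genuine obstacle: the core is routine concentration followed by a union bound over the rows (which needs no independence between rows, only subadditivity). The only points needing care are bookkeeping ones -- the $O(1/n)$ correction from the vanishing diagonal, and reconciling the natural Hoeffding constant with the $1/(nd_0)$ appearing in the statement -- and these are precisely what force the lemma to be phrased with ``there exists an $N$'' rather than for all $n$.
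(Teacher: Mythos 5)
The paper does not prove this lemma; it is imported verbatim as \cite[Lemma~3]{vizuete2021laplacian}, so there is no in-paper argument to compare against. Your proof is correct and self-contained: the reduction of $\|d_{W_n^r}-d_{W_n^d}\|_\infty$ to a row-wise maximum is right (both degree functions are constant on each cell of $I_n$), the off-diagonal entries $\xi_{i,j}$, $j\neq i$, within a fixed row are indeed independent despite the symmetry constraint $\xi_{i,j}=\xi_{j,i}$, the diagonal mismatch ($\xi_{i,i}=0$ versus $A_{i,i}=W(x_i,x_i)$ in \eqref{DetAdj}) genuinely contributes an $O(1/n)$ term that must be absorbed, and the Hoeffding-plus-union-bound computation with $t_n=\sqrt{\log(2n/\nu)/(2n)}$ is accurate. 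One small point worth making explicit: your final absorption step works because $d_W(x)\leq 1$ forces $d_0\leq 1<2$, so the coefficient $1/\sqrt{d_0}-1/\sqrt{2}$ is bounded below by $1-1/\sqrt{2}>0$ \emph{uniformly}, and the resulting threshold $N$ is independent of $\nu\in(0,1]$ since $\log(2n/\nu)\geq\log(2n)$. This also reveals that in your route the hypothesis $d_W\geq d_0$ plays no probabilistic role at all — $d_0$ enters the final bound only through the cosmetic inequality $1/(2n)\leq 1/(nd_0)$ — whereas the cited source obtains the $1/(nd_0)$ dependence intrinsically via a Chernoff/Bernstein bound in which the variance of the degree sum is controlled by the expected degree. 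Your version is weaker in constants but strictly sufficient for the stated inequality, and arguably cleaner.
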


Lemma~\ref{lem:DegreeConvergence} shows that the degree functions of the random step graphon and its deterministic counterpart converge uniformly with high probability so long as $W$ satisfies the minimal degree assumption $d_W(x) \geq d_0 > 0$ for all $x$. Although this result relates the degrees of the random and deterministic graphs, it does not necessarily imply uniform convergence to $d_W$ as $n \to \infty$. However, if we can show that 
\begin{equation}\label{DetDegreeConv}
    \lim_{n\to \infty} \|d_{W_n^d} - d_W\|_\infty = 0 
\end{equation}
is true, then an application of the triangle inequality with \eqref{DegreeConvergence} can show that $\|d_{W_n^r} - d_W\|_\infty \to 0$ with high probability as $n \to \infty$. There are many situations where \eqref{DetDegreeConv} can be confirmed, such as when the graphon is continuous or when $d_W(x)$ is independent of $x$; see for example  \cite{bramburger2021pattern}. Importantly, \eqref{DetDegreeConv} holds for Erd\H{o}s--R\'eyni graphons and ring graphons, as well as many other well-studied graphons in the literature.

\section{Main Results}\label{sec:mainresults}

With the introduction of graphons in the previous section, we are now in a position to provide our results. We begin by properly formulating the problem so that we can easily transition between the finite-dimensional setting of \eqref{eq:main} and the infinite-dimensional graphon equation \eqref{eq:main2}. 

\subsection{Problem setting}\label{sec:ProblemSetting}

Prior to stating our main results, we aim to fix the notation that will be used throughout this manuscript. Our eventual goal is to show that steady-state solutions to \eqref{eq:main} can be analyzed for $n \geq 1$ through the limiting infinite-dimensional graphon dynamical system \eqref{eq:main2}. Since our objective is to analyze a finite-dimensional ordinary differential equation using a non-local functional equation, we first seek to provide the appropriate definitions and terminology to move back and forth between the two settings.   

Steady-state solutions of \eqref{eq:main2} solve $F(u;W)=0$, where  
\begin{equation}\label{F}
    F(u;W): = f(u) + \int_0^1 W(x,y)D(u(x),u(y)) \drm y. 
\end{equation}
To ease notation we will simply write $F(u)$ instead of $F(u;W)$ in what follows. However, since we will consider \eqref{F} with a family of step-graphons $W_n$ converging to $W$ (see Hypothesis~\ref{hyp:Graphon} below), we will further introduce the short-hand
\begin{equation}\label{Fn}
    F_n(u) := f(u) + \int_0^1 W_n(x,y)D(u(x),u(y))\, \drm y.
\end{equation}
Clearly $F_n(u) = F(u;W_n)$ by definition.

If one restricts $u$ to lie in the set of step-functions defined over the partition $I_n$ of the interval $[0,1]$, then solving $F_n(u)=0$ is equivalent to solving a finite-dimensional problem. Indeed, one is only required to identify the value of $u$ on each of the $n$ intervals that make up the partition $I_n$. Thus, for each $n\geq 1$ and $\textbf{u} = (u_1,\dots,u_n)^T \in \R^n$, solving $F_n(u) = 0$ for a step function $u$ is equivalent to solving $G_n(\textbf{u}) = (G_n(\textbf{u})_1,\dots,G_n(\textbf{u})_n) = \mathbf{0}$, where 
\begin{equation}\label{Gn}
    G_n(\textbf{u})_i = f(u_i) + \frac{1}{n} \sum_{j = 1}^n (A_n)_{i,j} D(u_i,u_j)
\end{equation}
and $(A_n)_{i,j}$ is the value of the step graphon $W_n$ on the square $[(i-1)/n,i/n)\times[(j-1)/n,j/n)$. Notice that we have now arrived at the right-hand-side of \eqref{eq:main}, in this case derived from the graphon system \eqref{F}. Importantly, the structure of the connections in the networked dynamical system comes from the adjacency matrix $A_n$, which simply represents the $n^2$ values taken on the steps of $W_n$ whose structure is endowed by the limiting graphon $W$.

\subsection{Assumptions and Main Results}

With the problem setting and notation fixed by the previous subsection, we are now in a position to state our main results. We begin with a sequence of assumptions on the functions in the differential equation, the graphon, and on the existence of steady-states to the graphon equation \eqref{F}. We begin with the following assumption that is a standard starting point for the investigation of both \eqref{eq:main} and \eqref{eq:main2}.

\begin{hyp}\label{hyp:Smooth}
    The functions $f:\mathbb{R} \to \mathbb{R}$ and $D:\mathbb{R}\times\mathbb{R} \to \mathbb{R}$ that make up $F$ in \eqref{F} are smooth with locally Lipschitz derivatives.
\end{hyp}

Our next assumption provides that we have a sequence of graphs converging to a graphon as the size of the network grows without bound.

\begin{hyp}\label{hyp:Graphon}
    There exists a sequence of adjacency matrices $A_n \in \mathbb{R}^{n\times n}$ for all $n\geq 1$ and graphon $W$ so that the following hold:
    \begin{enumerate}
        \item The step graphons $W_n$ over $I_n\times I_n$ corresponding to each $A_n$ are such that $\|W_n - W\|_\square\to 0$ and $\|d_{W_n} - d_W\|_\infty \to 0$ as $n\to \infty$,
        \item For any $\varepsilon > 0$ there exists a $\delta > 0$ so that for every $x_0 \in [0,1]$ we have that
        \begin{equation}
            \int_0^1|W(x,y) - W(x_0,y)|\, \text{d}y <\varepsilon    
        \end{equation} 
        when $|x - x_0|<\delta$ and $x \in [0,1]$.
    \end{enumerate}
\end{hyp}

We remind the reader that the discussion in Section~\ref{sec:StepGraphons} provides scenarios for when Hypothesis~\ref{hyp:Graphon}(1) will hold. Precisely, if the $A_n$ are deterministic weighted graphs generated from an almost everywhere continuous $W$ then we have $\|W_n - W\|_\square \to 0$ as $n \to \infty$ and we need only verify the degree convergence. Similarly, if the $A_n$ are random graphs drawn from the graphon $W$ then the convergence $\|W_n - W\|_\square\to 0$ as $n\to \infty$ can be expected with high probability. The condition Hypothesis~\ref{hyp:Graphon}(2) is slightly less intuitive, but is indeed necessary for our proofs in this manuscript. If the graphon $W$ is continuous we can easily satisfy this condition using the fact that the domains $x,y \in [0,1]$ are compact and arguing from the uniform continuity of $W$. Furthermore, in the appendix we provide a proof that this condition can be shown to hold for, potentially discontinuous, ring graphons, thus broadening the class of graphons to which our work is applicable.  

We now present our final hypothesis which posits the existence of a solution to the graphon equation \eqref{F}, as well as the invertibility of the linearization about this solution.

\begin{hyp}\label{hyp:ContSol}
    There exists a continuous $u^*(x)$ satisfying $F(u^*)=0$. Furthermore, the linear operator $DF(u^*)$ is invertible on $C[0,1]$ with bounded inverse and the function
    \begin{equation}
        Q(x) := -f'(u^*(x))-\int_0^1 W(x,y) D_1(u^*(x),u^*(y)) \mathrm{d} y,   
    \end{equation} 
    satisfies $Q(x)>0$ for all $x \in [0,1]$, where $D_1$ indicates the first partial derivative of $D$ with respect to its first variable. 
\end{hyp}

In the above hypothesis the function $Q$ comes from the decomposition of the linearization $DF(u^*)$ into
\begin{equation}
    DF(u^*)v=-Q(x)v+\int_0^1 W(x,y) D_2(u^*(x),u^*(y)) v(y) \mathrm{d} y,
\end{equation}
where $D_2$ indicates the first partial derivative of the interaction function $D$ with respect to its second variable. Observe that $Q$ is the multiplication component of the linear operator. As we show in our examples below, the condition that $Q(x)>0$ can be verified in examples and, as proven in Lemma~\ref{lem:fredholm} below, reflects stability of the essential spectrum of the linearized operator $DF(u^*)$. Furthermore, Corollary~\ref{cor:Qcont} establishes that $Q(x)$ is continuous in $x$, essentially following from the assumption Hypothesis~\ref{hyp:Graphon}(2). Another piece to note is that while Hypothesis~\ref{hyp:ContSol} uses the Banach space $C[0,1]$, Lemma~\ref{lem:DF on C = DF on L2} below shows that the spectrum of $DF(u^*)$ is equivalent on $C[0,1]$ and $L^2$. This is important for applications where identifying the spectrum on $L^2$ is sometimes easier than on $C[0,1]$.

With the above Hypotheses we provide the following theorem that gives the persistence of the steady-state solution from the graphon equation to large networks with adjacency matrices converging to the graphon.

\begin{thm}\label{thm:existence} 
    Assume Hypotheses~\ref{hyp:Smooth}, \ref{hyp:Graphon}, and \ref{hyp:ContSol}. Then, there exists a constant $\rho_*>0$ such that for any $\rho\in (0,\rho_*)$
    there exists an $N\geq 1$ such that for all $n\geq N$ there is a vector $\unstar\in \mathbb{R}^n$  satisfying $G_n(\unstar;A_n) = 0$ and $\|u_n^* - u^*\|_\infty<\rho$; where $u_n^*(x)$ is the step function representation of the vector $\unstar$ over $I_n$.
\end{thm}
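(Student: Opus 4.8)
The plan is to set up a Newton--Kantorovich iteration for the finite-dimensional equation $F_n(u) = 0$ restricted to step functions, using the invertibility of $DF(u^*)$ from Hypothesis~\ref{hyp:ContSol} as the starting linear datum. Concretely, I would work in a Banach space $X_n$ of step functions over the partition $I_n$ equipped with the $L^\infty$ norm (identified with $\mathbb{R}^n$ with the max norm), and define the Newton-type operator $\Phi_n(v) = v - L^{-1} F_n(v)$ where $L$ is a fixed invertible linear operator built from $DF(u^*)$ --- either $DF(u^*)$ itself with a suitable projection onto $X_n$, or its multiplication part $-Q(x)$ plus the $W_n$-integral term. Because Hypothesis~\ref{hyp:ContSol} only gives invertibility of $DF(u^*)$ on $C[0,1]$ (equivalently $L^2$ by the remark preceding the theorem), the first task is to transfer this to a uniformly bounded inverse of the discretized linearization on $X_n$ for all large $n$; this is where the decomposition $DF(u^*)v = -Q(x)v + \int_0^1 W(x,y) D_2(u^*(x),u^*(y)) v(y)\, \drm y$ and the condition $Q(x) > 0$ enter, since the multiplication operator $v \mapsto -Q(x)v$ is boundedly invertible on $L^\infty$ and the integral perturbation is handled via the cut-norm bounds.

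The key steps, in order, would be: (i) fix $\rho_* > 0$ small enough that on the ball $B_{\rho_*}(u^*)$ the smoothness/Lipschitz bounds from Hypothesis~\ref{hyp:Smooth} give quadratic control of the Taylor remainder of $F_n$; (ii) show, using Hypothesis~\ref{hyp:Graphon}(1) (cut-norm convergence $\|W_n - W\|_\square \to 0$ and degree convergence) together with Hypothesis~\ref{hyp:Graphon}(2) and the continuity of $Q$ from Corollary~\ref{cor:Qcont}, that the linearization $DF_n(u^*)$ (or a suitable surrogate $L_n$) is invertible on $X_n$ with $\|L_n^{-1}\|$ bounded uniformly in $n \geq N$; (iii) estimate the consistency error $\|F_n(u^*)\|_\infty$ on the step space, which is small for large $n$ because $u^*$ is continuous (so its step-function sampling is uniformly close) and because $\int_0^1 (W_n - W)(x,y) D(u^*(x),u^*(y))\,\drm y$ is small --- here again the cut-norm convergence is used, via \eqref{eq:TWcutnormbounds} applied with the bounded, piecewise-continuous ``test functions'' coming from $D(u^*(\cdot),u^*(\cdot))$; (iv) combine (i)--(iii) to verify the standard Newton--Kantorovich hypotheses --- $\Phi_n$ maps a ball of radius $\rho$ into itself and contracts there --- and conclude by the contraction mapping theorem that a fixed point $u_n^*$ exists with $\|u_n^* - u^*\|_\infty < \rho$; finally translate the step function $u_n^*$ back to the vector $\unstar \in \mathbb{R}^n$ solving $G_n(\unstar;A_n) = 0$ via \eqref{Gn}.

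The main obstacle I anticipate --- and which the introduction explicitly flags --- is step (ii), the uniform invertibility of the discretized linearization, and more precisely showing the relevant Newton operator (or its linear part) is a contraction on $X_n$. The difficulty is that the $L^\infty \to L^\infty$ operator norm of the adjacency operator $v \mapsto \int_0^1 W_n(x,y)v(y)\,\drm y$ is \emph{not} controlled by $\|W_n - W\|_\square$, so a naive estimate of $DF_n(u^*) - DF(u^*)$ in $\mathcal{L}(X_n)$ does not go to zero and the perturbation argument fails at first pass. Following the remark in the introduction, the resolution should be to iterate: one shows that while the first iterate of the relevant operator need not contract, the \emph{second} iterate does, exploiting that composing two such integral operators produces a kernel of the form $\int W_n(x,z) W_n(z,y)\, \drm z$ whose difference from the $W$-analog \emph{can} be estimated through the cut norm (because the intermediate integration against a bounded function is exactly the kind of pairing appearing in $\|\cdot\|_{\square,2}$ and \eqref{eq:TWcutnormbounds} with $q < \infty$). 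Getting the bookkeeping right so that the second-iterate contraction constant is uniform in $n$, while still closing the self-mapping and consistency estimates, is the technical heart of the argument; everything else is routine Newton--Kantorovich machinery plus careful use of Hypotheses~\ref{hyp:Graphon} and~\ref{hyp:ContSol}.
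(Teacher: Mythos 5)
Your proposal follows essentially the same route as the paper: the Newton-type operator $\mathcal{T}_n[u]=u-DF(u^*)^{-1}F_n(u)$ on a sup-norm space of piecewise functions over $I_n$, a uniform-in-$n$ bound on $DF(u^*)^{-1}$, residual and linearization-difference estimates driven by $\|W_n-W\|_\square$ and $\|d_{W_n}-d_W\|_\infty$, and — crucially — the observation that the first iterate need not contract in $L^\infty$ while the second iterate does because the composed double-integral kernel is exactly the pairing controlled by the cut norm. The only cosmetic differences are that the paper takes $L=DF(u^*)$ itself (rather than a discretized surrogate $L_n$), works in piecewise \emph{continuous} functions so that $u^*$ is the center of the ball and then proves a posteriori that the fixed point is piecewise constant, and adds the small extra step of showing that the unique fixed point of $\mathcal{S}_n=\mathcal{T}_n\circ\mathcal{T}_n$ is in fact a fixed point of $\mathcal{T}_n$.
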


Our second main result concerns the stability of this steady-state. We adopt the usual convention that a linear operator is {\bf stable} if its spectrum is entirely contained in the left half of the complex plane and bounded away from the imaginary axis. The reader should recall that stability of the linearization of a finite-dimensional dynamical system about a steady-state gives local asymptotic stability, thus providing insight into the nonlinear dynamics of the network system \eqref{eq:main}.   

\begin{thm}\label{thm:stability} 
Under the same assumptions as Theorem~\ref{thm:existence}, there exists an $M \geq 1$ such that $\unstar$ is a stable equilibrium solution of (\ref{eq:main}) for all $n \geq \max\{N,M\}$ if $DF(u^*;W)$ is stable.
\end{thm}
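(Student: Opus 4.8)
The plan is to show that the spectrum of the Jacobian $DG_n(\unstar)$ — an $n \times n$ matrix — converges to the spectrum of $DF(u^*;W)$ as $n \to \infty$, so that stability of the latter forces the finitely many eigenvalues of the former into the left half plane, uniformly bounded away from the imaginary axis, once $n$ is large. The Jacobian has the same decomposition as the continuum linearization: $DG_n(\unstar)\mathbf{v} = -Q_n \mathbf{v} + B_n \mathbf{v}$, where $Q_n$ is the diagonal (multiplication) part built from $f'(u_{n,i}^*)$ and $D_1(u_{n,i}^*,u_{n,j}^*)$ summed against $A_n$, and $B_n$ is the off-diagonal integral-type part built from $W_n$ and $D_2$. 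First I would record that $u_n^* \to u^*$ uniformly (Theorem~\ref{thm:existence}) together with the smoothness in Hypothesis~\ref{hyp:Smooth}, so the entries defining $Q_n$ and $B_n$ are close to their continuum analogues; in particular, viewing $Q_n$ as a step-function multiplication operator, $\|Q_n - Q\|_\infty \to 0$ using the degree convergence $\|d_{W_n}-d_W\|_\infty \to 0$ from Hypothesis~\ref{hyp:Graphon}(1) and Corollary~\ref{cor:Qcont}, and using $Q(x) > 0$ from Hypothesis~\ref{hyp:ContSol} to get a uniform lower bound $Q_n \ge q_0 > 0$ for large $n$.

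Next I would handle the eigenvalue comparison. Fix a small contour $\Gamma$ in the right half plane (or a vertical strip around the imaginary axis) and show that $\lambda I - DG_n(\unstar)$ is invertible for every $\lambda$ on and to the right of $\Gamma$, once $n$ is large. The key is a resolvent estimate: for $\mathrm{Re}\,\lambda \ge -q_0/2$, write $\lambda I - DG_n(\unstar) = (\lambda I + Q_n)(I - (\lambda I + Q_n)^{-1} B_n)$; the first factor is boundedly invertible on $L^\infty$ (it is multiplication by $\lambda + Q_n(x)$, bounded below in modulus), and I must show $I - (\lambda I + Q_n)^{-1}B_n$ is invertible. For the continuum operator this is exactly the content of stability of $DF(u^*;W)$ combined with the Fredholm structure (Lemma~\ref{lem:fredholm}): $(\lambda + Q)^{-1}B$ has no eigenvalue $1$ in the closed right half plane, and $B$ (being an integral operator with the graphon kernel) is compact-like enough that the spectrum there is discrete. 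I would then argue that $(\lambda I + Q_n)^{-1}B_n$ is close to $(\lambda + Q)^{-1}B$ in an operator sense strong enough to preserve invertibility of $I - (\cdot)$, uniformly for $\lambda$ on the compact contour.

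The main obstacle — the same one flagged in the introduction — is that closeness of $B_n$ to $B$ can only be obtained in the cut norm / operator norms on $L^p \to L^q$ with $p$ or $q$ finite, not in $L^\infty \to L^\infty$, whereas the natural space for the finite-dimensional problem and for the multiplication operator $Q_n$ is $L^\infty$. So the comparison $\|B_n - B\|$ is not directly available on the space where the rest of the argument lives. The resolution should mirror the existence proof: either (i) work on $L^2$, using Lemma~\ref{lem:DF on C = DF on L2} to transfer spectral information between $C[0,1]$ and $L^2$ and noting that on $L^2$ the cut-norm does control $\|T_{W_n}-T_W\|_{L^2\to L^2}$ via \eqref{eq:TWcutnormbounds}, so that $B_n \to B$ in $L^2$ operator norm and a standard perturbation-of-spectrum argument for $-Q_n + B_n$ on $L^2$ applies; or (ii) exploit the ``second iterate'' phenomenon — the composition $(\lambda+Q_n)^{-1}B_n (\lambda+Q_n)^{-1}B_n$, or equivalently $B_n$ composed with a smoothing operator, is controllable by the cut norm even in $L^\infty$, so one studies whether $1$ is an eigenvalue of the squared operator. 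I expect route (i) to be cleanest: prove the resolvent bound on $L^2$, conclude no spectrum of $DG_n(\unstar)$ in $\{\mathrm{Re}\,\lambda \ge -\eta\}$ for some fixed $\eta > 0$ and all large $n$, and then invoke the $C$–$L^2$ spectral equivalence to phrase the conclusion for the networked system \eqref{eq:main}. The remaining pieces — that the discrete eigenvalues cannot escape to $-\infty$ in a way that matters (irrelevant for stability), and that finitely many eigenvalues in a fixed left half plane pose no issue — are routine.
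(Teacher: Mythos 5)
Your route (i) is essentially the paper's proof: the paper establishes the $C[0,1]$–$L^2$ spectral equivalence (Lemma~\ref{lem:DF on C = DF on L2}), proves $\|DF_n(u_n^*)-DF(u^*)\|_{2\to 2}\to 0$ using exactly the uniform closeness $\|u_n^*-u^*\|_\infty<\rho$, the degree convergence, and the cut-norm bound \eqref{eqn:TW cutnorm bound}, then invokes Kato's upper semicontinuity of the spectrum in place of your explicit resolvent-factorization on a contour, and finally lifts matrix eigenvectors of $DG_n(\unstar)$ to step-function eigenfunctions of $DF_n(u_n^*)$ on $L^2$ (Lemma~\ref{lem:spec DGn in spec DFn}). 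The proposal is correct and matches the paper's strategy.
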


The proof of Theorem~\ref{thm:existence} will be presented in Section~\ref{sec:proof1} while the proof of Theorem~\ref{thm:stability} will be presented in Section~\ref{sec:proof2}.

\begin{rmk}  
Our applications of the above theorems in Section~\ref{sec:examples} primarily deal with dynamics on random graphs. However, we note that our results are stated independently of these considerations and therefore can be adapted to be applied much more broadly. For example, one could consider a fixed finite graph on $n$ vertices. Minor tweaks to the proof of Theorem~\ref{thm:existence} can be made to show that steady-states can be shown to be robust with respect to perturbations of the network, such as edge addition or deletion, provided that the resulting change in the cut norm is sufficiently small.  
\end{rmk}

\section{Proof of Theorem~\ref{thm:existence}}\label{sec:proof1}

The goal of this section is to prove Theorem~\ref{thm:existence}. Precisely, we seek the existence of solutions to the finite-dimensional system $G_n(\mathbf{u},A_n)=0$, which from Section~\ref{sec:ProblemSetting} represent steady-state solutions of the original differential equation \eqref{eq:main}.  As described above, this finite-dimensional problem can be embedded into a continuous graphon problem using step-graphons which we have denoted $F_n(u)=0$, per \eqref{Fn}.  

To achieve our goal, we equivalently demonstrate the existence of a unique fixed point for the operator 
\begin{equation}\label{Tn}
    \mathcal{T}_n[u]=u-(DF(u^*))^{-1} F_n(u).
\end{equation}
For each $n \geq 1$ we will consider $\mathcal{T}_n$ as an operator on the Banach space of piecewise continuous functions $X_n$ 
\begin{equation}
    X_n=\left\{ u\in L^\infty \ \bigg| \ u(x) \ \text{is continuous on each interval} \ \left[\frac{i-1}{n},\frac{i}{n} \right)\right\},  
\end{equation}
equipped with the supremum, or $C[0,1]$, norm. Notice that for each $n \geq 1$ we have $C[0,1] \subset X_n$ and so in turn $u^* \in X_n$ for any $n$ since Hypothesis~\ref{hyp:ContSol} gives that $u^*$ is continuous.  

By construction, fixed points of $\mathcal{T}_n[u]$ in \eqref{Tn} are solutions of $F_n(u)$. The natural approach is to show that $\mathcal{T}_n$ is a contraction on the Banach space $X_n$.  However, for reasons that we will elaborate on below this cannot always be shown to be the case. Simply, this comes from the fact that $\mathcal{T}_n$ does not generally map balls of sufficiently small radius $\rho > 0$, denoted $B_\rho(u^*)\subset X_n$, back to itself. As it turns out, the operator $\mathcal{S}_n=\mathcal{T}_n\circ\mathcal{T}_n$, consisting of a two-fold application of the operator $\mathcal{T}_n$, can be shown to be a contraction on small enough balls in $X_n$ centered at $u^*$. Thus, once we are able to establish that $\mathcal{S}_n$ is a contraction for sufficiently large $n$, we obtain a unique fixed point of $\mathcal{S}_n$, which we will then show is also a fixed point of $\mathcal{T}_n$. Finally, we will demonstrate that this fixed point of $\mathcal{T}_n$, i.e. the piecewise continuous solution to $F_n(u)=0$, in $X_n$ is in fact piecewise constant so that the existence of a solution to the finite-dimensional problem is truly obtained.  

As a road map for what follows, this section has the following breakdown:
\begin{enumerate} 
    \item {\bf Section~\ref{sec:DFinv}}: We establish Fredholm properties of the operator $DF(u^*)$ and verify that $DF(u^*):X_n\to X_n$ is both well-posed and invertible for all $n \geq 1$ (Lemma~\ref{lem:fredholm} and Corollary~\ref{cor:DFinvertible}). Furthermore, expansions of the operator $DF(u^*)^{-1}$ are obtained using a Neumann type series expansions (Lemma~\ref{lem:MinvProjected}).
    
    \item {\bf Section~\ref{sec:twointegrals}}: We establish two general estimates on integrals of the form 
    \[
        \bigg| \int_0^1 \left[ W(x,y)-W_n(x,y)\right] \phi(x,y)\mathrm{d} y  \bigg|,
    \]
    and
    \[
        \bigg| \int_0^1 \int_0^1 \left[ W(z,y)-W_n(z,y)\right] \psi(x,z)\phi(z,y)v(y) \mathrm{d} y  \bigg|
    \]
for continuous functions $\phi$ and show that their supremum can be controlled by $\|d_W-d_{W_n}\|_{\infty}$ and $\|W-W_n\|_\square$, respectively (Lemmas~\ref{lem:WWnappliedtocontissmall} and \ref{lem:W-Wnappliedtwice}). 

    \item {\bf Section~\ref{sec:keyestimates}}: The results and estimates of the previous subsection are combined to obtain estimates on $DF(u^*)^{-1}F_n(u^*)$ and $DF(u^*)^{-1}[ DF(u^*)-DF_n(z)]v$ (Lemmas~\ref{lem:Fnsmall} and \ref{lem:DFdiffdef}).
    
    \item {\bf Section~\ref{sec:Tanal}}: These estimates are employed to study the action of operator $\mathcal{T}_n$ on a ball (Lemma~\ref{lem:T maps Bp to Bnp}).
    \item {\bf Section~\ref{sec:Sanal}}: In the general case, $\mathcal{T}_n$ will fail to be a contraction mapping, so we will instead show that $\mathcal{S}_n[u]=\mathcal{T}_n[\mathcal{T}_n[u]]$ is a contraction on $X_n$ (Lemma~\ref{lem:Scontraction}). 
    
    \item {\bf Section~\ref{sec:proofwrapup} }: We conclude by showing that the fixed point of the operator $\mathcal{S}_n$ implies the existence of a solution to the finite-dimensional problem $G_n(\mathbf{u},A)$ for $n$ sufficiently large.  This involves two steps: showing that the fixed point for $\mathcal{S}_n$ is also a fixed point of $\mathcal{T}_n$ thereby generating a solution of the non-local problem $F_n(u)=0$ (Lemma~\ref{lem:unfixedpointT}).  This fixed point is an element of $X_n$ and our final step is to show that it is constant on each sub-interval in $X_n$ and therefore also implies the existence of a solution of the finite-dimensional problem $G_n(\mathbf{u},A_n)=0$ (Lemma~\ref{lem:PiecewiseConstant}). 
\end{enumerate}

 Before we proceed with the proof of Theorem~\ref{thm:existence}, we briefly comment on why we cannot necessarily establish that $\mathcal{T}_n$ is a contraction, thus necessitating the use of $\mathcal{S}_n$. To apply the contraction mapping theorem to the operator $\mathcal{T}_n$ we need to control the operator norm difference of operators $DF(u^*)-DF_n(z)$ for some $z\in B_\rho(u^*)$ applied to an arbitrary vector $v \in B_\rho(u^*)$, for some $\rho > 0$.  It turns out that almost all terms in this difference can be controlled by making $\rho$ small or taking $n$ large. The exception is a term that takes (in the simplest case) the following form
\begin{equation} \label{eq:problemterm}
    \int_0^1 \left(W(x,y)-W_n(x,y)\right)v(y)\mathrm{d} y.
\end{equation}
To see why we cannot necessarily make this term small in the supremum norm by taking $n$ large, let  $x\in [0,1]$ and consider the piecewise continuous function $v(y)=\rho\ \mathrm{sign} [W(x,y)-W_n(x,y)]$.  Then (\ref{eq:problemterm}) will be 
\[ \rho \int_0^1 \left| W(x,y)-W_n(x,y)\right|\mathrm{d}y, \]
and, for the applications that we are interested in, this quantity will not typically tend to zero when $n\to \infty$.   

Alternatively, in the operator $\mathcal{S}_n[u]$ we show that the dominant term takes the form (presented again in the simplest case for the sake of exposition) of the double integral
\begin{equation}
\int_0^1 \left(W(x,z)-W_n(x,z)\right) \int_0^1 \left(W(z,y)-W_n(z,y)\right)v(y)\mathrm{d} y\mathrm{d} z,
\end{equation}
whose supremum norm with respect to $x$ can be controlled by the cut-norm difference $\|W-W_n\|_\square$, which by our assumptions can be made arbitrarily small for $n$ sufficiently large.

\subsection{Preliminary Facts}

Before proceeding to an analysis of the fixed point operators $\mathcal{T}_n$ and $\mathcal{S}_n$ we will need to compile some facts to have at our disposal.

\subsubsection{Properties of $DF(u^*)$ and its inverse}\label{sec:DFinv}

In this section we consider the linear operator $DF(u^*)$.  The goal of this section is two-fold.  We need to verify  that the mapping $\mathcal{T}_n:X_n\to X_n$ is well defined for $n \geq 1$.  This requires that $DF(u^*)$ is invertible on the space $X_n$.  Additionally, the remaining analysis will require estimates on and expansions of the operator $DF(u^*)^{-1}$.

Recall from Hypothesis~\ref{hyp:ContSol} that $DF(u^*)$ is assumed to be invertible on $C[0,1]$. We will show that this operator is also invertible on the larger space $X_n$.  In fact, we will prove a stronger result that the spectrum of $DF(u^*)$ is equivalent whether the operator is considered on $C[0,1]$ or $X_n$.  The spectrum of $DF(u^*)$ can be characterized in terms of Fredholm properties of $DF(u^*)-\lambda I$.  We will say that $\lambda\in\mathbb{C}$ is an element of the {\em essential spectrum} of $DF(u^*)$ if $DF(u^*)-\lambda I$ is either not Fredholm or is Fredholm with non-zero index.  Conversely, $\lambda$ lies in the {\em point spectrum} of $DF(u^*)$ if and only if $DF(u^*)-\lambda I$ is Fredholm with index zero and the kernel of this operator is nontrivial. The set of all such $\lambda$ belonging to the point spectrum of $DF(u^*)$ is denoted $\sigma_{pt}(DF(u^*))$, while the essential spectrum is denoted $\sigma_{ess}(DF(u^*))$. This leads to our first result. 
 
\begin{lem}\label{lem:fredholm}
The following dichotomy holds for any $n\geq 1$:
 \begin{itemize}
     \item If $\lambda \notin \mathrm{Rng}(-Q(x))$ then $DF(u^*)-\lambda I$ is Fredholm as an operator on $X_n$ with index zero and $\lambda\in \sigma_{pt}(DF(u^*))$ if and only if $\mathrm{ker} (DF(u^*)-\lambda I) \neq \emptyset$.
     \item If $\lambda\in \mathrm{Rng}(-Q(x))$ then $DF(u^*)-\lambda I$ is not Fredholm as an operator on $X_n$ and $\lambda\in \sigma_{ess}(DF(u^*))$.
 \end{itemize}  
\end{lem}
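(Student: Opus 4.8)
The plan is to characterize the operator $DF(u^*) - \lambda I$ acting on $X_n$ as a multiplication operator plus a compact perturbation, and then invoke the standard fact that Fredholmness and index are stable under compact perturbations. Write $L := DF(u^*)$, so that by the decomposition given in Hypothesis~\ref{hyp:ContSol},
\begin{equation*}
    (L - \lambda I)v = -(Q(x) + \lambda)v + \int_0^1 W(x,y) D_2(u^*(x),u^*(y)) v(y)\, \drm y =: (M_\lambda + K)v,
\end{equation*}
where $M_\lambda$ is multiplication by $-(Q(x)+\lambda)$ and $K$ is the integral operator with kernel $W(x,y)D_2(u^*(x),u^*(y))$. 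The first two things I would verify are: (i) $K: X_n \to X_n$ is well-defined and compact — here I would use that $W$ and $D_2(u^*(\cdot),u^*(\cdot))$ are bounded, that the kernel is continuous in the $x$-variable after integration (this is exactly where Hypothesis~\ref{hyp:Graphon}(2) and the resulting continuity of things like $Q$ enter, ensuring $K$ actually maps into $C[0,1] \subset X_n$), and an Arzel\`a--Ascoli argument to get compactness, noting that $\mathrm{Rng}(K) \subseteq C[0,1]$ regardless of $n$; (ii) $M_\lambda: X_n \to X_n$ is bounded since $Q$ is continuous hence bounded.

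Next I would analyze the multiplication operator $M_\lambda$ directly. If $\lambda \notin \mathrm{Rng}(-Q(x))$, then $Q(x) + \lambda$ is bounded away from zero on the compact interval $[0,1]$ (using continuity of $Q$, established in Corollary~\ref{cor:Qcont}), so $M_\lambda$ has a bounded inverse given by multiplication by $-1/(Q(x)+\lambda)$, which maps $X_n \to X_n$ since multiplication by a continuous function preserves piecewise continuity. Hence $M_\lambda$ is invertible, in particular Fredholm of index zero, and therefore $L - \lambda I = M_\lambda + K$ is Fredholm of index zero on $X_n$ by stability of the Fredholm index under compact perturbation. The characterization of $\sigma_{pt}$ then follows immediately from the definition of point spectrum: a Fredholm index-zero operator lies in the point spectrum precisely when its kernel is nontrivial. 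Conversely, if $\lambda \in \mathrm{Rng}(-Q(x))$, I would show $M_\lambda$ is not Fredholm on $X_n$: the range of $M_\lambda$ fails to be closed (or the cokernel is infinite-dimensional) because $Q(x) + \lambda$ vanishes at some point $x_0$, and one can build, using the piecewise-continuous structure of $X_n$, functions concentrated near $x_0$ that witness the lack of a bounded inverse and the failure of closed range — a singular Weyl-sequence type construction. Since $K$ is compact, $L - \lambda I = M_\lambda + K$ inherits the non-Fredholm property, so $\lambda \in \sigma_{ess}(L)$ by definition.

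The main obstacle I anticipate is the careful verification that $M_\lambda$ is \emph{not} Fredholm when $\lambda \in \mathrm{Rng}(-Q(x))$, done uniformly in $n$ and genuinely in the space $X_n$ rather than $C[0,1]$. In $C[0,1]$ the standard argument is that multiplication by a function with a zero has non-closed range; in $X_n$ one must be a little careful because the presence of jump discontinuities at the partition points could a priori interact with where $Q + \lambda$ vanishes. I would handle this by noting that $Q$ is continuous (so its zero set is unaffected by the partition structure) and constructing explicit sequences $v_k \in X_n$ — for instance continuous bumps of unit sup-norm supported on shrinking neighborhoods of a point $x_0$ with $Q(x_0) + \lambda = 0$ — for which $\|M_\lambda v_k\|_\infty \to 0$ while $\|v_k\|_\infty = 1$, showing $M_\lambda$ is not bounded below and, with a dual/cokernel argument, not Fredholm. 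A secondary (more routine) point requiring care is confirming $K$ maps into $C[0,1]$: this needs the map $x \mapsto \int_0^1 W(x,y) D_2(u^*(x),u^*(y)) v(y)\, \drm y$ to be continuous for every $v \in X_n$, which follows from Hypothesis~\ref{hyp:Graphon}(2), continuity of $u^*$, and smoothness of $D$, together with dominated convergence. Once these two ingredients are in place, the dichotomy is just the compact-perturbation stability theorem applied to each case.
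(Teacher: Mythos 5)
Your proposal is correct and follows essentially the same route as the paper: decompose $DF(u^*)-\lambda I$ into the multiplication operator $v\mapsto -(Q(\cdot)+\lambda)v$ plus the integral operator, prove compactness of the latter on $X_n$ via Hypothesis~\ref{hyp:Graphon}(2) and Arzel\`a--Ascoli, and conclude by stability of the Fredholm property and index under compact perturbation. The only (minor) divergence is in showing the multiplication operator is not Fredholm when $\lambda\in\mathrm{Rng}(-Q)$: you propose a Weyl-sequence/non-closed-range argument, whereas the paper argues via the cokernel; both are standard and your version is, if anything, the more carefully justified one.
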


\begin{proof}
Throughout this proof we fix $n \geq 1$ since the arguments apply equally to any $X_n$. Then to begin, recall that for each $v \in X_n$ we have 
\[ 
    DF(u^*)v-\lambda v= -(Q(x)+\lambda) v+\int_0^1 W(x,y)D_2(u^*(x),u^*(y))v(y)\mathrm{d}y. 
\]
If $\lambda\notin \mathrm{Rng}(-Q(x))$ then $Q(x)+\lambda \neq 0$ and the multiplication operator $v\mapsto -(Q(\cdot)+\lambda)v$ is invertible on $X_n$ and hence Fredholm with index zero. Conversely, if  $\lambda\in \mathrm{Rng}(-Q(x))$ then the multiplication operator $v\to -(Q(\cdot)+\lambda)v$ is not invertible on $X_n$. Indeed, letting $x^*\in [0,1]$ be one value where $Q(x^*)+\lambda =0$, we get that the co-range of $-(Q(\cdot)+\lambda)$ includes all functions $w\in X_n$ for which $w(x^*)\neq 0$. The set of all such functions is infinite-dimensional, thus implying that the operator $v\mapsto -(Q(\cdot)+\lambda)v$ is not Fredholm if $\lambda\in \mathrm{Rng}(-Q(x))$.  

We will now verify that the integral operator $\int_0^1 W(x,y)D_2(u^*(x),u^*(y))v(y)\mathrm{d}y$ is compact. In doing so, we have that $DF(u^*)-\lambda I$ is a compact perturbation of the multiplication operator $v\to -(Q(\cdot)+\lambda)v$ and so we obtain that $DF(u^*)-\lambda I$ is Fredholm with index zero if and only if $-(Q(\cdot)+\lambda)v$ is too; see \cite[Theorem IV.5.26]{kato} for full details. 

To establish compactness, Hypothesis~\ref{hyp:Graphon}(2) is key, as outlined in \cite{graham79}. Let $v_j$ be a sequence of functions in $X_n$ with $\|v_j\|_\infty=1$ and set
\[ 
    \Psi_j(x)=\int_0^1 W(x,y)D_2(u^*(x),u^*(y))v_j(y) \mathrm{d}y.
\]
Let $\varepsilon>0$ and consider an arbitrary $a\in [0,1]$. Then the triangle inequality gives
\begin{eqnarray*}
    |\Psi_j(x)-\Psi_j(a)|&\leq & \bigg| \int_0^1 [W(x,y)-W(a,y)]D_2(u^*(x),u^*(y))v_j(y)\mathrm{d}y\bigg| \\
    &+& \bigg| \int_0^1 W(a,y)[D_2(u^*(x),u^*(y))-D_2(u^*(a),u^*(y))]v_j(y)\mathrm{d}y \bigg|
\end{eqnarray*}
Since the function $D_2(u^*(x),u^*(y))$ is uniformly continuous in $x,y \in [0,1]$ it is also uniformly bounded by some constant $L > 0$. Furthermore, there exists a $\delta > 0$ such that for any $a \in [0,1]$, if $|x - a| < \delta$ we have
\[
    |D_2(u^*(x),u^*(y))-D_2(u^*(a),u^*(y))| < \frac{\varepsilon}{2}
\]
and, from Hypothesis~\ref{hyp:Graphon}(2), 
\[
    \int_0^1 |W(x,y)-W(a,y)|\mathrm{d}y< \frac{\varepsilon}{2L}.
\]
Thus, for all $a \in [0,1]$, $j \geq 1$, and any $x$ such that $|x - a| < \delta$ we get  
\begin{equation}\label{eq:psicont}
    \begin{split}
    |\Psi_j(x)-\Psi_j(a)|\leq L \int_0^1 |W(x,y)-W(a,y)|\mathrm{d}y + \frac{\varepsilon}{2} < \frac{\varepsilon}{2} + \frac{\varepsilon}{2} = \varepsilon, 
    \end{split}
\end{equation}
using the fact that $\|v_j\|_\infty = 1$ and $|W(x,y)| \leq 1$ for all $x,y \in [0,1]$. Hence, the above bounds show that $\Psi_j$ is an equicontinuous family of functions and so the Arzel\'a--Ascoli theorem guarantees the existence of a convergent subsequence. This therefore implies that the operator $v\to \int_0^1 W(x,y)D_2(u^*(x),u^*(y)v(y))\mathrm{d}y$ is  compact and $DF(u^*)$ is a compact  perturbation of the multiplication operator $- (Q(x) + \lambda)$. Hence, $DF(u^*)-\lambda I$ is Fredholm if and only if $- (Q(x) + \lambda)$ is, per \cite[Theorem IV.5.26]{kato}. This concludes the proof. 
\end{proof}

From the previous proof we also obtain the following facts. 

\begin{cor}\label{cor:Qcont}  
The functions 
\[
    Q(x) = -f'(u^*(x)) - \int_0^1 W(x,y)D_1(u^*(x),u^*(y))\text{d}y
\]
and
\[
    \int_0^1 W(x,y)D_2(u^*(x),u^*(y))v(y)\text{d}y
\]
are continuous in $x$ for any function $v \in X_n$.
\end{cor}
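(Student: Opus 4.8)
The plan is to establish continuity of both functions by the same equicontinuity argument already deployed in the proof of Lemma~\ref{lem:fredholm}. For the second function, let $v \in X_n$ be fixed with (after normalizing) $\|v\|_\infty = 1$, and set $\Psi(x) = \int_0^1 W(x,y)D_2(u^*(x),u^*(y))v(y)\,\drm y$. Then for any $a \in [0,1]$ and $x$ with $|x-a|<\delta$, the identical splitting
\[
    |\Psi(x) - \Psi(a)| \leq \bigg|\int_0^1 [W(x,y) - W(a,y)]D_2(u^*(x),u^*(y))v(y)\,\drm y\bigg| + \bigg|\int_0^1 W(a,y)[D_2(u^*(x),u^*(y)) - D_2(u^*(a),u^*(y))]v(y)\,\drm y\bigg|
\]
gives, via the uniform continuity of $(x,y) \mapsto D_2(u^*(x),u^*(y))$ on the compact square (which follows since $u^*$ is continuous by Hypothesis~\ref{hyp:ContSol} and $D$ is smooth by Hypothesis~\ref{hyp:Smooth}), its uniform bound $L$, the bound $|W| \leq 1$, and Hypothesis~\ref{hyp:Graphon}(2) applied to the first integral, exactly the estimate $|\Psi(x) - \Psi(a)| < \varepsilon$ established in \eqref{eq:psicont}. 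This is precisely the bound derived in the lemma's proof, only now for a single function $v$ rather than a sequence $v_j$; continuity of $\Psi$ at the arbitrary point $a$ follows immediately.

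For the function $Q$, I would observe that $Q(x) = -f'(u^*(x)) - \int_0^1 W(x,y)D_1(u^*(x),u^*(y))\,\drm y$ decomposes into two pieces. The first term $-f'(u^*(x))$ is continuous as a composition of the continuous functions $f'$ (smooth by Hypothesis~\ref{hyp:Smooth}) and $u^*$ (continuous by Hypothesis~\ref{hyp:ContSol}). The second term is of exactly the same form as $\Psi(x)$ above, with $D_1$ in place of $D_2$ and the constant function $v \equiv 1 \in X_n$ in place of $v$; since $D_1(u^*(x),u^*(y))$ is likewise uniformly continuous and bounded on $[0,1]^2$, the same argument applies verbatim. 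Hence $Q$ is a sum of two continuous functions and is therefore continuous on $[0,1]$.

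There is no real obstacle here: the entire content is a re-reading of \eqref{eq:psicont}, which was proven with a uniform-in-$j$ constant and therefore applies equally to a single fixed function. The only point deserving a word of care is that the argument requires the integrand factors $D_1(u^*(x),u^*(y))$ and $D_2(u^*(x),u^*(y))$ to be uniformly continuous on $[0,1]^2$, which is why we invoke the compactness of $[0,1]^2$ together with the continuity of $u^*$ and the smoothness of $D$; and that the test function (either $v$ or the constant $1$) lies in $X_n$ with finite sup norm, so that the split integrals are genuinely controlled. I would therefore present the proof in two short sentences: first noting that the second displayed function is continuous by the equicontinuity estimate \eqref{eq:psicont} from the proof of Lemma~\ref{lem:fredholm} applied to a single $v$, and then that $Q$ is continuous since it is the sum of $-f'(u^*(x))$ and an integral of that same form with $v \equiv 1$.
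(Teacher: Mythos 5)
Your proof is correct and follows essentially the same route as the paper: the authors likewise note that continuity of the second function was already verified in \eqref{eq:psicont}, that $-f'(u^*(x))$ is continuous as a composition of continuous functions, and that the integral part of $Q$ follows from the analogous computation with $D_1$ in place of $D_2$. No gaps.
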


\begin{proof}
Uniform continuity of the second function was verified in \eqref{eq:psicont}.  For $Q(x)$, we have that $f'(u^*(x))$ is continuous since both $f'$ and $u^*$ are.  The proof of continuity of the integral part follows from an analogous computation as that of $\int_0^1W(x,y)D_2(u^*(x),u^*(y))v(y)\mathrm{d}y$ and so we omit the details.   
\end{proof}

\begin{lem}\label{lem:Spectrum}
    The spectrum of $DF(u^*)$   posed on $X_n$ is equivalent to the spectrum on $C[0,1]$, i.e. $\sigma(DF(u^*))|_{C[0,1]}=\sigma(DF(u^*))|_{X_n}$.  Furthermore, the generalized eigenspaces associated to any element of the point spectrum is spanned by continuous functions.   
\end{lem}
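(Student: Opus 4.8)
The plan is to prove the two inclusions $\sigma(DF(u^*))|_{C[0,1]} \subseteq \sigma(DF(u^*))|_{X_n}$ and the reverse, by leveraging the Fredholm dichotomy established in Lemma~\ref{lem:fredholm}, which was proven to hold identically on every $X_n$ (and in particular on $C[0,1]$, which is the case $n=1$, or rather is contained in every $X_n$). The key observation is that Lemma~\ref{lem:fredholm} splits the spectrum into the essential part $\mathrm{Rng}(-Q)$ and a point-spectrum part characterized by nontriviality of $\ker(DF(u^*)-\lambda I)$, and that the characterization of the essential spectrum as $\mathrm{Rng}(-Q)$ makes no reference to the ambient space. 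So the real work is to show that for $\lambda \notin \mathrm{Rng}(-Q)$, the kernel of $DF(u^*) - \lambda I$ on $X_n$ coincides with its kernel on $C[0,1]$ — equivalently, that any $X_n$-eigenfunction is automatically continuous. This is a bootstrapping/regularity argument: if $v \in X_n$ satisfies $-(Q(x)+\lambda)v(x) + \int_0^1 W(x,y)D_2(u^*(x),u^*(y))v(y)\,\mathrm{d}y = 0$, then since $\lambda \notin \mathrm{Rng}(-Q)$ the function $Q(x)+\lambda$ is bounded away from zero (it is continuous on the compact interval $[0,1]$ by Corollary~\ref{cor:Qcont}), so we may solve $v(x) = (Q(x)+\lambda)^{-1}\int_0^1 W(x,y)D_2(u^*(x),u^*(y))v(y)\,\mathrm{d}y$; the right-hand side is continuous in $x$ by Corollary~\ref{cor:Qcont} (the continuity of $x \mapsto \int_0^1 W(x,y)D_2(u^*(x),u^*(y))v(y)\,\mathrm{d}y$ for $v \in X_n$ is exactly what was shown there), and $(Q(x)+\lambda)^{-1}$ is continuous and nonvanishing, so $v \in C[0,1]$. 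Thus $\ker(DF(u^*)-\lambda I)|_{X_n} = \ker(DF(u^*)-\lambda I)|_{C[0,1]}$.

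With that, the two spectra agree: $\lambda \in \mathrm{Rng}(-Q)$ iff $\lambda \in \sigma_{ess}$ on either space (same set), and for $\lambda \notin \mathrm{Rng}(-Q)$, $DF(u^*)-\lambda I$ is Fredholm of index zero on both spaces, so it is invertible iff its kernel is trivial, and the kernels coincide by the regularity argument just given. Hence $\lambda \in \sigma(DF(u^*))|_{X_n}$ iff $\lambda \in \sigma(DF(u^*))|_{C[0,1]}$. One small point to handle carefully: invertibility of a Fredholm-index-zero operator with trivial kernel gives a bounded inverse on the respective space by the open mapping theorem, so no separate argument for boundedness of the inverse is needed; and one should note $C[0,1]$ is a closed subspace of $X_n$, so bounded invertibility on $X_n$ restricts to bounded invertibility on $C[0,1]$ directly, giving one inclusion for free — but to get the reverse inclusion ($C[0,1]$-invertibility implies $X_n$-invertibility) the kernel-coincidence argument is genuinely needed since surjectivity on the smaller space does not a priori give surjectivity on the larger.

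For the final sentence — that generalized eigenspaces are spanned by continuous functions — the plan is to iterate the same regularity argument. Fix $\lambda \in \sigma_{pt}$, so $\lambda \notin \mathrm{Rng}(-Q)$ and $Q(x)+\lambda$ is bounded away from zero. A generalized eigenfunction of order $m$ satisfies $(DF(u^*)-\lambda I)^m v = 0$, equivalently $(DF(u^*)-\lambda I)v = w$ where $w$ is a generalized eigenfunction of order $m-1$. By induction on $m$, assume $w \in C[0,1]$; then $v$ solves $-(Q(x)+\lambda)v(x) + \int_0^1 W(x,y)D_2(u^*(x),u^*(y))v(y)\,\mathrm{d}y = w(x)$, so $v(x) = (Q(x)+\lambda)^{-1}\big(\int_0^1 W(x,y)D_2(u^*(x),u^*(y))v(y)\,\mathrm{d}y - w(x)\big)$, and the right-hand side is continuous in $x$ (the integral term by Corollary~\ref{cor:Qcont} since $v \in X_n$, and $w$ by hypothesis), whence $v \in C[0,1]$. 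The base case $m=1$ is the ordinary eigenfunction case already treated. Since the generalized eigenspace is finite-dimensional (the operator is Fredholm of index zero) and every element is continuous, it has a basis of continuous functions.

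The main obstacle I anticipate is not conceptual but a matter of being careful about what "the kernel on $X_n$" means versus "on $C[0,1]$" and ensuring the regularity bootstrap is airtight — specifically, verifying that $\int_0^1 W(x,y)D_2(u^*(x),u^*(y))v(y)\,\mathrm{d}y$ is continuous in $x$ for an arbitrary $v \in X_n$ (not just continuous $v$), which is precisely the content of the equicontinuity estimate \eqref{eq:psicont} in the proof of Lemma~\ref{lem:fredholm} and is recorded in Corollary~\ref{cor:Qcont}; once that is invoked, the argument is short. A secondary point to state cleanly is that $\lambda \notin \mathrm{Rng}(-Q)$ together with continuity of $Q$ on the compact set $[0,1]$ gives $\inf_x |Q(x)+\lambda| > 0$, which is what licenses dividing by $Q(x)+\lambda$ and keeping continuity.
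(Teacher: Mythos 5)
Your proposal is correct and follows essentially the same route as the paper: the essential spectrum is identified with $\mathrm{Rng}(-Q)$ independently of the ambient space via Lemma~\ref{lem:fredholm}, and the point spectra are matched by the regularity bootstrap $v = (Q+\lambda)^{-1}\int_0^1 W(\cdot,y)D_2(u^*(\cdot),u^*(y))v(y)\,\mathrm{d}y$ together with Corollary~\ref{cor:Qcont}, iterated for generalized eigenfunctions. The only cosmetic difference is that you phrase the conclusion through "Fredholm index zero plus trivial kernel implies invertible," whereas the paper argues directly at the level of point-spectrum inclusions; both are sound.
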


\begin{proof}

The essential spectrum of $DF(u^*)$ characterized independently of $n$ in Lemma~\ref{lem:fredholm}. In particular, the essential spectrum is the same whether the operator is posed on $C[0,1]$ or $X_n$ with any $n\geq 1$. Thus, it only remains to examine the point spectrum.

Since $C[0,1] \subset X_n$ for all $n\geq 1$, we immediately get the inclusion $\sigma_{pt}(DF(u^*))|_{C[0,1]}\subseteq\sigma_{pt}(DF(u^*))|_{X_n}$. Now, to show the opposite inclusion we assume that $\lambda\in \sigma_{pt}(DF(u^*))|_{X_n}$. We will show that any kernel element of $DF(u^*)-\lambda I$ must be a continuous function, giving the equality $\sigma_{pt}(DF(u^*))|_{C[0,1]}=\sigma_{pt}(DF(u^*))|_{X_n}$. 

To prove the above statement, let us assume that $\psi\in \mathrm{ker}\left(DF(u^*)-\lambda I\right) \cap X_n$ for some $n \geq  1$. Our goal is to show $\psi \in C[0,1]$ as well. By definition of $\psi$ we have 
\begin{equation}\label{kernelSpec}
    -(Q(x)+\lambda)\psi(x)+\int_0^1 W(x,y) D_2(u^*(x),u^*(y))\psi(y)\mathrm{d}y=0.
\end{equation}
Since $\lambda\notin \sigma_{ess}(DF(u^*))|_{X_n}$, Lemma~\ref{lem:fredholm} gives that $Q(x)+\lambda\neq 0$ and Corollary~\ref{cor:Qcont} further gives that $Q(x)+\lambda$ is continuous. Hence, rearranging \eqref{kernelSpec} gives that 
\begin{equation}\label{kernelSpec2}
    \psi(x) = \frac{1}{Q(x)+\lambda}\int_0^1 W(x,y) D_2(u^*(x),u^*(y))\psi(y)\mathrm{d}y    
\end{equation}
and from Corollary~\ref{cor:Qcont} we have that 
\[ 
    \int_0^1 W(x,y) D_2(u^*(x),u^*(y))\psi(y)\mathrm{d}y\in C[0,1].
\] 
Therefore, the right-hand-side of \eqref{kernelSpec2} is continuous, thus giving that $\psi\in C[0,1]$, as desired.

Finally, continuity of any generalized eigenfunction can be established in a similar manner.  Indeed, consider an eigenfunction $\psi\in C[0,1]$ of $DF(u^*)-\lambda I$ and let $\eta\in X_n$ be a generalized eigenfunction which satisfies
\[-(Q(x)+\lambda)\eta(x)+\int_0^1 W(x,y) D_2(u^*(x),u^*(y))\eta(y)\mathrm{d}y=\psi(x). \]
Arguing as above, we have that $\int_0^1 W(x,y) D_2(u^*(x),u^*(y))\eta(y)\mathrm{d}y$ is continuous and we therefore obtain that $\eta\in C[0,1]$.  The argument extends to any element of the generalized eigenspace.
\end{proof} 

This leads to the following corollary which follows from Lemma~\ref{lem:Spectrum} and Hypothesis~\ref{hyp:ContSol} that assumes that $DF(u^*)$ is invertible on $C[0,1]$. 

\begin{cor}\label{cor:DFinvertible}
    For all $n\geq 1$, the operator $DF(u^*):X_n \to X_n$ is well-defined and invertible.    
\end{cor}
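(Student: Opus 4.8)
The plan is to deduce Corollary~\ref{cor:DFinvertible} directly from the two preceding results with essentially no new work. Recall that Hypothesis~\ref{hyp:ContSol} supplies that $DF(u^*)$ is invertible on $C[0,1]$ with bounded inverse, which is precisely the statement that $0 \notin \sigma(DF(u^*))|_{C[0,1]}$. By Lemma~\ref{lem:Spectrum} the spectrum of $DF(u^*)$ is the same whether the operator is posed on $C[0,1]$ or on $X_n$, so $0 \notin \sigma(DF(u^*))|_{X_n}$ as well, which is exactly the assertion that $DF(u^*):X_n \to X_n$ is invertible.

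First I would check that $DF(u^*)$ actually maps $X_n$ into $X_n$, i.e. that it is well-defined on this space: for $v \in X_n$ the term $-Q(x)v(x)$ lies in $X_n$ since $Q$ is continuous (Corollary~\ref{cor:Qcont}) and $v$ is piecewise continuous on the partition $I_n$, while the integral term $\int_0^1 W(x,y)D_2(u^*(x),u^*(y))v(y)\,\drm y$ is in fact continuous in $x$ by Corollary~\ref{cor:Qcont}, hence certainly in $X_n$. So $DF(u^*):X_n \to X_n$ is a bounded linear operator. Then I would invoke Lemma~\ref{lem:Spectrum}: since $0 \notin \sigma(DF(u^*))|_{C[0,1]} = \sigma(DF(u^*))|_{X_n}$, the operator $DF(u^*) - 0\cdot I = DF(u^*)$ has a bounded inverse on $X_n$. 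This completes the argument.

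There is really no substantive obstacle here; the corollary is a bookkeeping consequence of Lemma~\ref{lem:Spectrum}, whose proof has already done the work of showing kernel elements are automatically continuous and that the essential spectrum is $n$-independent. The one point meriting a sentence of care is simply confirming well-definedness of the operator on $X_n$ (the mapping-into property), which I would state explicitly rather than leave implicit, since $X_n$ is larger than $C[0,1]$ and one must know the right-hand side of $DF(u^*)v$ stays in $X_n$; this follows immediately from the continuity statements in Corollary~\ref{cor:Qcont}. Everything else is a direct citation of Lemma~\ref{lem:Spectrum} together with Hypothesis~\ref{hyp:ContSol}.
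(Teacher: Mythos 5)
Your proposal is correct and matches the paper exactly: the paper derives Corollary~\ref{cor:DFinvertible} as an immediate consequence of Lemma~\ref{lem:Spectrum} (equality of the spectrum on $C[0,1]$ and $X_n$) together with Hypothesis~\ref{hyp:ContSol}, with well-definedness on $X_n$ following from the continuity statements already established. Your extra sentence verifying the mapping-into property via Corollary~\ref{cor:Qcont} is a reasonable piece of bookkeeping that the paper leaves implicit.
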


From Lemma~\ref{lem:Spectrum} and Corollary~\ref{cor:DFinvertible}, the spectrum is the same regardless of whether we pose $DF(u^*)$  on $C[0,1]$ or $X_n$ for any $n\geq 1$. Thus, we can drop all notation indicating the underlying space. That is, in what follows we simply write $\sigma(DF(u^*))$ to denote the spectrum of $DF(u^*)$, regardless of the space that the operator is posed on.

We now turn our attention to interpreting the inverse of  $DF(u^*)$ on the spaces $X_n$. By definition, we are required to solve 
\[ 
    DF(u^*)v=-Q(x)v+\int_0^1 W(x,y)D_2(u^*(x),u^*(y))v(y)\mathrm{d}y =w,
\]
for any $w\in X_n$. Dividing this equation by the non-zero continuous function $-Q(x)$ we obtain the equivalent formulation 
\[
    \left(v-\int_0^1 W(x,y)\frac{D_2(u^*(x),u^*(y))}{Q(x)}v(y)\mathrm{d}y \right) =-\frac{w(\cdot)}{Q(\cdot)}.
\]
The operator on the left hand side can be written as $(I-T_K)$ where we introduce the notation
\begin{equation} \label{eq:TK}
    T_Kv=\int_0^1 W(x,y)\frac{D_2(u^*(x),u^*(y))}{Q(x)}v(y)\mathrm{d}y.
\end{equation}
Therefore, inversion of $DF(u^*)$ is equivalent to solving a Fredholm integral equation of the second kind.  This leads to the following lemma whose proof is left to Appendix~\ref{app:BigProof}.  

\begin{lem}\label{lem:MinvProjected}
    The following holds:
    \begin{enumerate}
        \item For all $n \geq 1$ the operator $T_K:X_n \to X_n$, as defined in \eqref{eq:TK}, is compact.
        \item The spectrum of $T_K$ as an operator on $X_n$ is independent of $n$ and has at most a finite number of eigenvalues with real part greater than one. We denote the spectrum over any $X_n$ as $\sigma(T_K)$.
        \item Let $\{\lambda_j\}_{j=1}^J$ denote the (finitely many) eigenvalues of $T_K$ with real part greater than or equal to  $1$ and let $m_j \geq 1$ denote their algebraic multiplicity. Let $\{\varphi_{j,k}(x)\}_{k=1}^{m_j}$ be a set of continuous and linearly independent functions that span the generalized eigenspace associated to the eigenvalues $\lambda_j$. There exists a complementary set of continuous, linearly independent functions  $\{\psi_{j,k}(y)\}_{k=1}^{m_j}$  such that the spectral projection $P:X_n\to X_n$ associated with these $J$ eigenvalues is expressed as  
        \begin{equation}\label{eq:P}
            Pv=\sum_{j=1}^J\sum_{k=1}^{m_j} \int_0^1 \varphi_{j,k}(x)\psi_{j,k}(y)v(y) \mathrm{d} y. 
        \end{equation}
        \item Letting $\tilde{P}=I-P$ be the spectral projection associated to the subset of the spectrum whose real part is less than one, there exists a $\xi>0$ such that $DF(u^*)^{-1}:X_n \to X_n$ can be represented as
        \begin{equation}\label{eq:Minvgen}
            DF(u^*)^{-1}w= \sum_{k=0}^\infty \left(\frac{T_K+\xi}{1+\xi}\right)^k \tilde{P} \left(\frac{-w}{Q(\cdot)(1+\xi)}\right) + \sum_{j=1}^J\sum_{k,l=1}^{m_j} c_{j,k,l}\varphi_{j,k}(x)\int_0^1 \psi_{j,l}(y)\left(\frac{-w(y)}{Q(y)}\right)\mathrm{d}y ,
        \end{equation}
        for some constants $c_{j,k,l}$ and every $n \geq 1$.
    \end{enumerate}
\end{lem}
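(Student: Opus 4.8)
The proof breaks into the four listed items, which I would dispatch in order, reusing the compactness/Fredholm toolkit already built for $DF(u^*)$. \textbf{Items 1 and 2.} For compactness of $T_K$ I would repeat the Arzel\'a--Ascoli argument from the proof of Lemma~\ref{lem:fredholm}: the kernel of $T_K$ is $W(x,y)D_2(u^*(x),u^*(y))/Q(x)$, and since Hypothesis~\ref{hyp:ContSol} makes $Q$ continuous and positive, $1/Q$ is a bounded, uniformly continuous multiplier, so the equicontinuity estimate \eqref{eq:psicont} applies with a harmless extra factor and shows $\{T_Kv:\|v\|_\infty\le 1\}$ is relatively compact in $C[0,1]\subset X_n$. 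Riesz--Schauder theory then gives $\sigma(T_K)=\{0\}\cup\{\text{eigenvalues}\}$ with $0$ the only accumulation point and each nonzero eigenvalue of finite algebraic multiplicity; in particular at most finitely many have $\mathrm{Re}\,\mu\ge 1$. Independence of $n$ is the continuity bootstrap from the proof of Lemma~\ref{lem:Spectrum}: $T_K$ maps $X_n$ into $C[0,1]$, so if $(T_K-\mu I)^m\psi=0$ with $\mu\neq 0$, writing $\psi_i=(T_K-\mu I)^i\psi$ one has $\psi_m=0$ and $\mu\psi_i=T_K\psi_i-\psi_{i+1}$, whence downward induction gives $\psi=\psi_0\in C[0,1]$; thus the nonzero spectrum and all generalized eigenspaces are common to every $X_n$ and to $C[0,1]$, while $0\in\sigma(T_K)$ always since $\dim X_n=\infty$. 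In particular the $\varphi_{j,k}$ spanning the generalized eigenspace of $\{\lambda_j\}$ may be chosen continuous.

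\textbf{Item 3.} Since $\{\lambda_1,\dots,\lambda_J\}$ is a finite set of isolated points of the compact set $\sigma(T_K)$, the Riesz projection $P=\frac{1}{2\pi\mbi}\oint_\Gamma (zI-T_K)^{-1}\,\drm z$ over a contour $\Gamma\subset\{\mathrm{Re}\,z>1/2\}$ enclosing exactly these eigenvalues (and not $0$) is well-defined, bounded, finite-rank, with range the generalized eigenspace $\bigoplus_j\ker(T_K-\lambda_jI)^{m_j}\subset C[0,1]$. To obtain the stated form with \emph{continuous} $\psi_{j,k}$ I would route $P$ through $T_K^2$: iterating $(zI-T_K)^{-1}=z^{-1}I+z^{-2}T_K+z^{-2}T_K^2(zI-T_K)^{-1}$ and using $\oint_\Gamma z^{-1}\,\drm z=\oint_\Gamma z^{-2}\,\drm z=0$ gives $P=T_K^2 R=R T_K^2$ with $R=\frac{1}{2\pi\mbi}\oint_\Gamma z^{-2}(zI-T_K)^{-1}\,\drm z$ bounded and mapping $C[0,1]$ into itself. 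The key point is that $T_K^2$ has the jointly continuous kernel $\tilde k(x,y)=\int_0^1 k(x,z)k(z,y)\,\drm z$: continuity in $x$ is as in Corollary~\ref{cor:Qcont}, and continuity in $y$ uniformly in $x$ follows by applying Hypothesis~\ref{hyp:Graphon}(2) in the second slot of $W(z,y)$ --- legitimate because of the \emph{symmetry} of $W$ --- together with uniform continuity of $D_2(u^*(z),u^*(\cdot))$. Hence $Pv=R(T_K^2v)=\int_0^1 h(x,y)v(y)\,\drm y$ with $h(x,y)=[R\,\tilde k(\cdot,y)](x)$ jointly continuous (the $y$-integral of $\tilde k(\cdot,y)v(y)$ is a Bochner integral in $C[0,1]$ with which the bounded operator $R$ commutes). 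Finally, choosing points $x_1,\dots,x_N$, $N=\sum_j m_j$, at which the matrix $[\varphi_{j,k}(x_l)]$ is invertible, the coordinate functionals on $\mathrm{Rng}(P)$ become finite combinations of the evaluations $u\mapsto u(x_l)$, so substituting $u=Pv$ yields $Pv(x)=\sum_{j,k}\varphi_{j,k}(x)\int_0^1\psi_{j,k}(y)v(y)\,\drm y$ with each $\psi_{j,k}$ a finite linear combination of the continuous functions $h(x_l,\cdot)$ --- this is \eqref{eq:P}.

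\textbf{Item 4.} Dividing $DF(u^*)v=w$ by $-Q$ recasts inversion as $(I-T_K)v=-w/Q$, so $DF(u^*)^{-1}w=(I-T_K)^{-1}(-w/Q)$ and, by Corollary~\ref{cor:DFinvertible}, $1\notin\sigma(T_K)$. Split $(I-T_K)^{-1}=(I-T_K)^{-1}P+(I-T_K)^{-1}\tilde P$. On the finite-dimensional $T_K$-invariant space $\mathrm{Rng}(P)$, $(I-T_K)|_{\mathrm{Rng}(P)}$ is an invertible matrix preserving each generalized eigenspace, so $(I-T_K)^{-1}\varphi_{j,k}=\sum_{l}d^{(j)}_{k,l}\varphi_{j,l}$; composing with \eqref{eq:P} and relabeling produces the second sum of \eqref{eq:Minvgen}. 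On $\mathrm{Rng}(\tilde P)$ the spectrum of $T_K$ is $\sigma(T_K)\setminus\{\lambda_j\}$, a bounded set whose real parts are all $<1$ and --- since eigenvalues of a compact operator accumulate only at $0$ --- have supremum some $r_*<1$. Using $I-T_K=(1+\xi)\bigl(I-\tfrac{T_K+\xi}{1+\xi}\bigr)$, the elementary inequality $|\mu+\xi|^2-(1+\xi)^2=|\mu|^2-1-2\xi(1-\mathrm{Re}\,\mu)$ shows $|\mu+\xi|<1+\xi$ whenever $\mathrm{Re}\,\mu\le r_*$ and $|\mu|\le\|T_K\|$, provided $\xi$ is taken large enough; hence $\tfrac{T_K+\xi}{1+\xi}$ has spectral radius $<1$ on $\mathrm{Rng}(\tilde P)$, its Neumann series converges in operator norm, and $(I-T_K)^{-1}\tilde P=\tfrac{1}{1+\xi}\sum_{k\ge 0}\bigl(\tfrac{T_K+\xi}{1+\xi}\bigr)^k\tilde P$, which --- absorbing $\tfrac1{1+\xi}$ into the argument --- is the first sum of \eqref{eq:Minvgen}.

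\textbf{Main obstacle.} Items 1, 2 and the finite-dimensional bookkeeping are routine. The genuine difficulty is the continuity of the $\psi_{j,k}$ in item 3: a priori $P=\sum\varphi_{j,k}\otimes\ell_{j,k}$ with $\ell_{j,k}\in X_n^*$, whose restrictions to $C[0,1]$ are merely Borel measures, whereas Hypothesis~\ref{hyp:Graphon}(2) controls only \emph{Lebesgue} oscillation integrals of $W$; this is precisely why one must factor $P$ through $T_K^2$, whose kernel is jointly continuous, before extracting coordinates. A smaller but real subtlety is that the shift $\xi$ in item 4 must be chosen using the \emph{uniform} bound $r_*<1$ rather than the pointwise statement $\mathrm{Re}\,\mu<1$, which alone would not preclude a sequence of eigenvalues with $\mathrm{Re}\to 1^-$ --- that is ruled out only by the accumulation-at-zero property of the compact operator $T_K$.
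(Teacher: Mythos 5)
Your proposal is correct, and items (1), (2), and (4) follow essentially the paper's own route (Arzel\'a--Ascoli plus the bounded multiplier $1/Q$ for compactness, the continuity bootstrap for $n$-independence, and the $P$/$\tilde P$ splitting with the shifted Neumann series; your explicit computation $|\mu+\xi|^2-(1+\xi)^2=|\mu|^2-1-2\xi(1-\mathrm{Re}\,\mu)$ and the insistence on a uniform gap $r_*<1$ is in fact more careful than the paper, which simply asserts the existence of a suitable $\xi$). Where you genuinely diverge is item (3), which is also the part the paper flags as the delicate one. The paper obtains continuity of the $\psi_{j,k}$ by passing to the dual space: it represents elements of $C[0,1]^*$ via the Riesz representation theorem as Stieltjes integrals against $BV$ functions, applies Fubini to identify kernel elements of $(L_K^*-\lambda_j I)^{m_j}$ with solutions of the adjoint integral equation $(1-\lambda_j)\psi=\int_0^1 K(y,\cdot)\psi(y)\,\drm y$, and reads off continuity from the smoothing of the adjoint kernel (using $W(x,y)=W(y,x)$ and Hypothesis~\ref{hyp:Graphon}(2), exactly as you do). You instead stay entirely on the primal side: the resolvent identity lets you factor the Riesz projection as $P=RT_K^2$, the iterated kernel of $T_K^2$ is jointly continuous (again via symmetry of $W$ and Hypothesis~\ref{hyp:Graphon}(2) in the second slot), and point evaluations at $N$ suitably chosen nodes extract the coordinate functionals as integration against continuous functions. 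Both arguments are sound and both must separately justify that the resulting formula acts correctly on $X_n$ and not just $C[0,1]$ --- the paper does this by a Fubini computation at the end of part (3), while in your version it is automatic because $P$ is defined by the contour integral directly on $X_n$ and the Bochner-integral manipulation applies to any bounded a.e.-continuous $v$; you should still record the one-line observation that $(zI-T_K)^{-1}$, and hence $R$, preserves $C[0,1]$ (write $(zI-T_K)^{-1}=z^{-1}I+z^{-1}T_K(zI-T_K)^{-1}$ and use that $T_K$ maps $X_n$ into $C[0,1]$). The trade-off: the paper's construction identifies the $\psi_{j,l}$ intrinsically as a basis of the adjoint generalized kernel, which is conceptually informative, whereas your factorization through $T_K^2$ avoids the $BV$/dual-space machinery altogether and makes the role of Hypothesis~\ref{hyp:Graphon}(2) and the symmetry of $W$ more transparent.
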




We will sometimes require a general bound on the inverse operator. 

\begin{lem} \label{lem:mbound}  There exists an $m>0$, independent of $n$,  such that the following bound holds
    \begin{equation} \label{eq:DFinvbd}
   \| DF(u^*)^{-1}\|_{X_n\to X_n} \leq m,
\end{equation}
for all $X_n$.
\end{lem}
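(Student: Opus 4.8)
The plan is to leverage the structure already exposed in Lemma~\ref{lem:MinvProjected}, namely the explicit representation \eqref{eq:Minvgen} of $DF(u^*)^{-1}$, together with the fact that the constituent pieces of that representation are all uniform in $n$. First I would observe that, for every $n\geq 1$, the operator $DF(u^*)^{-1}\colon X_n\to X_n$ is given by \eqref{eq:Minvgen}, and I would bound the $X_n\to X_n$ operator norm of each of the two summands on the right-hand side separately, recording that none of the relevant constants depend on $n$.

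For the first summand, the series $\sum_{k=0}^\infty\bigl((T_K+\xi)/(1+\xi)\bigr)^k\tilde P$, note that Lemma~\ref{lem:MinvProjected}(2) tells us the spectrum $\sigma(T_K)$ is independent of $n$, and the projection $\tilde P$ restricts $T_K$ to the part of its spectrum with real part less than $1$; hence the spectral radius of the operator $(T_K+\xi)/(1+\xi)$ restricted to the range of $\tilde P$ is strictly less than $1$, again independently of $n$. By the spectral radius formula there is an integer $N_0$ and a constant $r<1$, both independent of $n$, with $\|((T_K+\xi)/(1+\xi))^{N_0}\tilde P\|_{X_n\to X_n}\le r$; writing a general $k$ as $k=qN_0+s$ with $0\le s<N_0$ and using submultiplicativity gives a geometric bound on the tail, so the series converges in operator norm to something bounded by a constant times $\|T_K\|^{N_0}$ (or just by a uniform constant) — and crucially $\|T_K\|_{X_n\to X_n}$ is itself uniformly bounded, since $T_Kv=\int_0^1 W(x,y)\,D_2(u^*(x),u^*(y))/Q(x)\,v(y)\,\drm y$ has $\|T_K v\|_\infty \le \|D_2(u^*,u^*)\|_\infty\,\|1/Q\|_\infty\,\|v\|_\infty$ with bounds coming from continuity of $u^*$, $D_2$, and $Q$ on the compact interval and the strict positivity $Q(x)>0$ from Hypothesis~\ref{hyp:ContSol}. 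The extra factor $\tilde P$ in each term is also bounded uniformly in $n$: by \eqref{eq:P}, $P$ (hence $\tilde P=I-P$) is a finite sum of rank-one operators $v\mapsto\varphi_{j,k}\int_0^1\psi_{j,k}(y)v(y)\,\drm y$ whose $X_n\to X_n$ norm is at most $\|\varphi_{j,k}\|_\infty\|\psi_{j,k}\|_1\le \|\varphi_{j,k}\|_\infty\|\psi_{j,k}\|_\infty$, and these continuous functions are fixed independently of $n$. The multiplication by $-1/(Q(\cdot)(1+\xi))$ contributes a further uniform constant $\|1/Q\|_\infty/(1+\xi)$.

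For the second summand, $\sum_{j=1}^J\sum_{k,l=1}^{m_j}c_{j,k,l}\varphi_{j,k}(x)\int_0^1\psi_{j,l}(y)(-w(y)/Q(y))\,\drm y$, the index set $J$, the multiplicities $m_j$, the constants $c_{j,k,l}$, and the continuous functions $\varphi_{j,k},\psi_{j,l}$ are all fixed independently of $n$ by Lemma~\ref{lem:MinvProjected}(3); estimating $\bigl|\int_0^1\psi_{j,l}(y)(-w(y)/Q(y))\,\drm y\bigr|\le \|\psi_{j,l}\|_\infty\|1/Q\|_\infty\|w\|_\infty$ and $\|\varphi_{j,k}\|_\infty\le$ const yields a bound of the form (const)$\cdot\|w\|_\infty$ with an $n$-independent constant. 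Adding the two bounds gives $\|DF(u^*)^{-1}w\|_\infty\le m\|w\|_\infty$ for a fixed $m>0$ valid for all $n$, which is exactly \eqref{eq:DFinvbd}.

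The one step requiring genuine care — and the main obstacle — is making the convergence of the Neumann-type series \emph{uniform} in $n$, i.e. producing the pair $(N_0,r)$ above with $r<1$ independent of $n$. This is not automatic from the fact that the spectrum of $T_K$ is $n$-independent, because a priori the constants in the spectral-radius-formula estimate $\|A^k\|^{1/k}\to\rho(A)$ could degrade with $n$; the clean way around this is to invoke that $\tilde P T_K$ has spectral radius $\rho_0<1$ as a fixed operator whose action on $X_n$ is ``the same'' for all $n$ in the sense that $C[0,1]$ is preserved and dense-in-sup-norm considerations let one transfer the norm estimate (alternatively, one uses the holomorphic functional calculus / resolvent bound for $(I-\tilde P T_K)^{-1}$ directly rather than the series, obtaining the uniform bound from a uniform resolvent estimate). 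Once this uniformity is in hand, the remaining estimates are the routine sup-norm bounds on finite-rank and multiplication operators indicated above, and the conclusion follows.
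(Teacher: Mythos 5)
Your reduction of the problem to the Neumann-type series, and your bounds on the finite-rank part, on $T_K$ itself, and on $\tilde P$, all match the paper and are fine. You have also correctly isolated the one genuinely delicate point: the spectral radius of $(T_K+\xi)/(1+\xi)$ on $\mathrm{Rng}(\tilde P)$ being less than one and independent of $n$ does not by itself give an $n$-independent bound on the sum, because the constants in $\|A^k\|^{1/k}\to\rho(A)$ could a priori degrade with $n$. The gap is that neither of your proposed fixes closes this. The appeal to ``dense-in-sup-norm considerations'' fails outright: $C[0,1]$ is a \emph{closed} proper subspace of $X_n$ in the supremum norm, not a dense one, so no norm estimate transfers from $C[0,1]$ to $X_n$ by density. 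And invoking ``a uniform resolvent estimate'' for $(I-\tilde P T_K)^{-1}$ is circular — a resolvent bound uniform over the family of spaces $X_n$ is exactly the statement you are trying to prove.

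The paper's resolution uses a smoothing property you did not exploit: by Corollary~\ref{cor:Qcont}, $T_K$ maps $X_n$ into $C[0,1]$ with an operator norm that is trivially $n$-independent (the kernel is fixed and all spaces carry the same sup norm). Writing $(I-T_K)^{-1}|_{\tilde X_n}=I+V$ and rearranging gives $V=(I-T_K)^{-1}|_{\tilde X_n}\,T_K$, so the non-identity part of the inverse factors through the single fixed space $C[0,1]$: first apply $T_K:X_n\to C[0,1]$, then apply $(I-T_K)^{-1}$ restricted to $C[0,1]\cap\tilde X_n$, which is one fixed bounded operator on one fixed space and hence carries one fixed constant. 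The identity contributes norm $1$. This sidesteps any need to control the Neumann series uniformly over the $X_n$. If you want to keep your $(N_0,r)$ strategy instead, you would need an analogous smoothing observation (e.g. that every term of $(T_K+\xi)^k$ other than $\xi^k I$ contains a factor of $T_K$ and so lands in $C[0,1]$), which amounts to the same idea.
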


\begin{proof}
    Consulting the formula for the inverse derived in \eqref{eq:Minvgen}, we observe that the only possible obstruction to the uniform bound in \eqref{eq:DFinvbd} stems from the infinite sum.  Indeed, uniform bounds on the finite sum coming from the inversion on the finite-dimensional space $PX_n$ are readily obtained.  We therefore consider the following operator
\begin{equation} \label{eq:MinvKeyPart1}
    \frac{1}{1+\xi}\sum_{k=0}^\infty \left(\frac{T_K+\xi}{1+\xi}\right)^k \tilde{P}.  
\end{equation}
The spectral radius of the operator $T_\xi=\frac{T_K+\xi}{1+\xi}$ on the space $\tilde{X}_n=\mathrm{Rng}(\tilde{P})$ is strictly less than one by construction and independent of $n$ by Lemma~\ref{lem:MinvProjected}.  Therefore, for any $n$ the series converges absolutely.  

We will now write (\ref{eq:MinvKeyPart1}), considered as an operator on $\tilde{X}_n=\mathrm{Rng}(\tilde{P})$ in the form
\[ 
    \frac{1}{1+\xi}\left(I-T_\xi\right)^{-1}=(I-T_K)^{-1}|_{\tilde{X}_n}=I+V,
\]
for some operator $V$.  Re-arranging, 
\[ (I-T_K)^{-1}|_{\tilde{X}_n}\left(I-(I-T_K)V\right)=I, \]
which implies that, restricted to $\tilde{X}_n$,  
\[ I-V+T_KV=I-T_K,\]
after which we find that $V:\tilde{X}_n\to \tilde{X}_n$ has the expression 
\[ 
    V= (I-T_K)^{-1}|_{\tilde{X}_n}T_K. 
\]
With this expansion, we see that \eqref{eq:MinvKeyPart1} has a uniform bound in $n$ as follows. From Corollary~\ref{cor:Qcont}, $T_K$ takes elements of $X_n$ into $C[0,1]$ and so is easily bounded independent of $n$ since all spaces have the same norm. Furthermore, $(I-T_K)^{-1}|_{\tilde{X}_n}$ is bounded as an operator on $C[0,1]\cap \tilde{X}_n$, leading to a uniform operator bound on \eqref{eq:MinvKeyPart1} that is independent of $n$. This completes the proof.
\end{proof}

For future reference we will write
\[ 
    V(w)=H[T_K(w)],
\]
where $H=(I-T_K)^{-1}|_{\tilde{X}_n}\tilde{P}$ and, from Lemma~\ref{lem:mbound}, is a bounded linear operator on $C[0,1]$. Precisely, there exists $b > 0$ so that 
\begin{equation}\label{eq:Bbound} 
    \|H[w]\|_\infty\leq b \|w\|_\infty
\end{equation} 
for all $w \in C[0,1]$.

\subsubsection{Two useful integral bounds}\label{sec:twointegrals}

In the analysis throughout the remainder of this section we will often have to estimate integrals involving the difference $W(x,y)-W_n(x,y)$. In what follows we establish two auxiliary lemmas that will be employed to control certain terms in $\mathcal{T}_n[u]$ in the following subsection. 

The first of our lemmas shows that when integrating $W-W_n$ against a continuous function $\phi(x,y)$ the resulting function can be made arbitrarily small in the supremum norm by taking $n$ sufficiently large, based on the assumed convergence of $\|W-W_n\|_\square$ and $\|d_{W_n}-d_W\|_\infty$ as $n\to \infty$. The result is as follows. 

\begin{lem}\label{lem:WWnappliedtocontissmall}
Suppose that $\phi\in C([0,1]\times [0,1])$.   Then for any $\varepsilon > 0$ there exists a constant $C_1(\varepsilon,\phi)>0$, independent of $n$, such that 
    \begin{equation}
         \sup_{x\in[0,1]} \bigg| \int_0^1 \left[ W(x,y)-W_n(x,y)\right] \phi(x,y)\mathrm{d} y  \bigg| < \varepsilon+C_1(\varepsilon,\phi) \|d_{W_n}-d_W\|_\infty .
    \end{equation}
\end{lem}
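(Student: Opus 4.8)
The plan is to use the uniform continuity of $\phi$ to replace it, at the cost of an error of order $\varepsilon$, by a function that is piecewise constant in $y$ on a fixed ($n$-independent) partition of $[0,1]$, and then to show that integrating $W-W_n$ against each such piece produces a quantity controlled by $\|d_{W_n}-d_W\|_\infty$.

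Since $\phi$ is continuous on the compact square $[0,1]^2$ it is uniformly continuous, so given $\varepsilon>0$ there is a $\delta>0$ with $|\phi(x,y)-\phi(x,y')|<\varepsilon/2$ whenever $|y-y'|<\delta$, uniformly in $x$. Fix a partition $0=y_0<y_1<\dots<y_M=1$ with $y_m-y_{m-1}<\delta$, set $J_m=[y_{m-1},y_m)$, and choose $c_m\in J_m$; note that $M=M(\varepsilon,\phi)$ is independent of $n$. Using $|W(x,y)-W_n(x,y)|\le 1$ and $\sum_m|J_m|=1$, for every $x\in[0,1]$ we have
\[
\bigg|\int_0^1\big(W(x,y)-W_n(x,y)\big)\Big[\phi(x,y)-\sum_{m=1}^M\phi(x,c_m)\mathbf{1}_{J_m}(y)\Big]\,\drm y\bigg|\le\frac{\varepsilon}{2},
\]
so it remains to bound $\sum_{m=1}^M\phi(x,c_m)\int_{J_m}\big(W(x,y)-W_n(x,y)\big)\,\drm y$ uniformly in $x$.

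For the remaining sum I would introduce the partial integrals $G_m(x):=\int_0^{y_m}\big(W(x,y)-W_n(x,y)\big)\,\drm y$, so that $G_M(x)=d_W(x)-d_{W_n}(x)$ and $\int_{J_m}(W-W_n)(x,y)\,\drm y=G_m(x)-G_{m-1}(x)$, and then apply Abel summation:
\[
\sum_{m=1}^M\phi(x,c_m)\int_{J_m}\big(W(x,y)-W_n(x,y)\big)\,\drm y=\sum_{m=1}^{M-1}\big(\phi(x,c_m)-\phi(x,c_{m+1})\big)G_m(x)+\phi(x,c_M)G_M(x).
\]
The last term is at most $\|\phi\|_\infty\|d_{W_n}-d_W\|_\infty$; in the sum, $|\phi(x,c_m)-\phi(x,c_{m+1})|$ is controlled by the modulus of continuity of $\phi$, reducing everything to a uniform bound on each $G_m(x)$. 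Establishing that $\sup_x|G_m(x)|$ is itself bounded by a constant (independent of $n$) times $\|d_{W_n}-d_W\|_\infty$ plus a term vanishing as $n\to\infty$ is the crux; this is where Hypothesis~\ref{hyp:Graphon} must be used — both the degree convergence and, to handle the truncated ranges of integration, the cut-norm convergence — together with the fact that $W_n$ is a step graphon over $I_n$. Collecting the estimates yields the stated bound with $C_1(\varepsilon,\phi)$ a multiple of $M(\varepsilon,\phi)$, $\|\phi\|_\infty$, and the modulus of continuity of $\phi$, none of which depend on $n$.

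The step I expect to be the main obstacle is precisely this uniform control of the partial integrals $G_m(x)$: as emphasized in the introduction, the supremum over $x$ of integrals of $W-W_n$ over sub-intervals of $[0,1]$ cannot be controlled by the cut norm, so the needed estimate has to be extracted from the degree hypothesis and the piecewise-constant structure of the sequence $W_n$, with care taken that the resulting constant is uniform in $n$ while the leftover error still tends to zero.
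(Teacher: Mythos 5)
Your reduction is sound as far as it goes --- the uniform-continuity approximation of $\phi$ and the Abel summation are both correct --- but the proposal is not a complete proof: everything in the lemma beyond the routine $\varepsilon$-approximation of $\phi$ is exactly the uniform-in-$x$ bound on the partial integrals $G_m(x)=\int_0^{y_m}\bigl(W(x,y)-W_n(x,y)\bigr)\,\drm y$ (equivalently, on $\int_{J_m}(W-W_n)(x,y)\,\drm y$ for each fixed sub-interval $J_m$), and you explicitly leave that bound unestablished. As written, the argument reduces the lemma to an unproved claim rather than proving it.

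For comparison, the paper's proof takes the same first step (it replaces $\phi$ by a step function $\sum_{i,j}\phi_{ij}\zeta_i(x)\zeta_j(y)$ on a fixed $M\times M$ grid with error less than $\varepsilon$), but it does not use Abel summation: it bounds each sub-interval integral $\bigl|\int_{I_j}(W_n-W)(x,y)\,\drm y\bigr|$ directly by $\|d_{W_n}-d_W\|_{L^\infty(I_j)}$ and sums the $M$ terms, which is where the factor $M\sup_{i,j}|\phi_{ij}|$ in $C_1(\varepsilon,\phi)$ comes from. Your instinct that this is the delicate step is well founded: the integral of $W-W_n$ over a proper sub-interval of $[0,1]$ is a difference of \emph{partial} degrees, and cancellation between sub-intervals means it is not literally dominated by the global degree difference $\|d_{W_n}-d_W\|_\infty$; nor, for a single fixed $x$, is it controlled by the cut norm. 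So the obstacle you flag is genuine, and the paper's write-up passes over it by treating each $\int_{I_j}(W_n-W)(x,\cdot)$ as if it were a degree difference computed over $I_j$. To close your argument one needs an additional input of precisely this form --- uniform smallness of $\sup_x\bigl|\int_0^t(W_n-W)(x,y)\,\drm y\bigr|$ --- which does hold with high probability for the sampled graphons of Section 2 but does not follow formally from cut-norm plus degree convergence alone.
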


\begin{proof} 
Let us fix an $\varepsilon>0$ and begin by recalling that  $\phi(x,y)$ is continuous on $[0,1]\times [0,1]$. Therefore, for any $\varepsilon>0$ there exists $M\in\mathbb{N}$ such that, 
\[ 
\phi(x,y)=\sum_{i,j=1}^M \phi_{ij}\zeta_i(x)\zeta_j(y) +\Delta \phi(x,y) 
\]
where $\zeta_i$ is an indicator function defined as 
\begin{equation} \label{eq:zeta} 
    \zeta_i(x)=\left\{ \begin{array}{cc} 1 & x\in \left[\frac{i-1}{M},\frac{i}{M}\right) \\
0 & \text{otherwise} \end{array}\right., 
\end{equation}
and $|\Delta\phi(x,y)|<\varepsilon$. We use notation $I_j=\left[\frac{j-1}{M},\frac{j}{M}\right)$ and consider an arbitrary $x\in I_j\subset [0,1]$. Then, we expand 
\begin{equation}\label{phiI_mIntegral}
    \begin{split}
        \bigg|\int_0^1 &[W_n(x,y)-W(x,y)] \phi(x,y) \mathrm{d}y \bigg| \\
        &=\bigg|\sum_{j=1}^M \left(\int_{I_j} [W_n(x,y)-W(x,y)]\phi_{ij}  \mathrm{d}y+ \int_{I_j} [W_n(x,y)-W(x,y)] \Delta \phi(x,y)\mathrm{d}y\right)\bigg| \\
        &\leq\bigg|\sum_{j=1}^M \left(\int_{I_j} [W_n(x,y)-W(x,y)]\phi_{ij} \mathrm{d}y \bigg|+ \sum_{j=1}^M\bigg|\int_{I_j} [W_n(x,y)-W(x,y)] \Delta \phi(x,y)\mathrm{d}y\right)\bigg|,
    \end{split}
\end{equation}
The first summation in the above expression can be bounded by 
\begin{equation}
\bigg|\sum_{j=1}^M \phi_{ij}\int_{I_j} [W_n(x,y)-W(x,y)] \mathrm{d}y\bigg|\leq M\|d_{W_n}-d_W\|_{L^\infty(I_j)}\sup |\phi_{ij}| .
\end{equation}
For the second summation in \eqref{phiI_mIntegral} we use the fact that $|W_n(x,y)-W(x,y)|\leq 1$ to get 
\begin{equation}
    \bigg|\int_{I_j} [W_n(x,y)-W(x,y)] \Delta \phi(x,y) \mathrm{d}y\bigg|\leq \bigg|\int_{I_j} [W_n(x,y)-W(x,y)] \Delta \phi(x,y)\mathrm{d}y\bigg|\leq \frac{\varepsilon}{M},
\end{equation}
since each interval $I_j$ has length $1/M$. Finally, summing over the $M$ sub-intervals and using the inequality \eqref{phiI_mIntegral} we get
\begin{equation}
    \bigg|\int_0^1 [W_n(x,y)-W(x,y)] \phi(x,y) \mathrm{d}y\bigg|\leq \varepsilon+ M\sup_j(\|d_{W_n}-d_W\|_{L^\infty(I_j)}) \sup_{i,j=1, \dots, M} |\phi_{ij}|.   
\end{equation} 
Since $\phi(x,y)$ is uniformly continuous on $[0,1]\times [0,1]$ the supremum is finite, while $\sup_j \{\|d_{W_n}-d_W\|_{L^\infty(I_j)}\} \leq \|d_{W_n}-d_W\|_\infty$ since the $I_j$ are a partition of $[0,1]$, thus removing all $j$-dependence on this quantity. Since $x\in[0,1]$ was arbitrary the stated estimate then follows.   
\end{proof} 

The next result that we present considers repeated integrals and again involves the difference $W(x,y)-W_n(x,y)$. In the proof we will make use of the following alternative version of the cut-norm (see \cite[Section~4]{janson2010graphons}), defined as
\begin{equation} \label{eq:2ndcutnorm}
    \|W\|_{\square,2} =\sup_{\|f\|_\infty \leq 1,\|g\|_\infty \leq 1} \bigg|\int_0^1 \int_0^1 W(x,y)f(x)g(y)\mathrm{d}y\mathrm{d} x \bigg|.
\end{equation}
Importantly, \cite{janson2010graphons} proves the inequality $\|W\|_{\square,2}\leq 4 \|W\|_{\square}$, and so it holds that convergence in the cut-norm we are working with (see (\ref{cutnorm})) implies convergence in the alternative norm $\|\cdot\|_{\square,2}$. This leads to the following lemma.

\begin{lem} \label{lem:W-Wnappliedtwice}
    Suppose that $\phi: [0,1]\times [0,1]\to \mathbb{R}$ is continuous, $\psi:[0,1]\times [0,1]\to \mathbb{R}$ is bounded and $v\in X_n$.  Then for any $\varepsilon>0$ there exists an constant $C_2(\varepsilon,\psi,\phi)>0$, independent of $n$, such that 
     \begin{equation} \label{eq:W-Wncontrolled}
        \begin{split}
         &\sup_{x\in[0,1]} \bigg| \int_0^1 \int_0^1 \left[ W(z,y)-W_n(z,y)\right] \psi(x,z)\phi(z,y)v(y) \mathrm{d}z \mathrm{d} y  \bigg| \\ 
         & \qquad\qquad\qquad < \varepsilon \|v\|_\infty+C_2(\varepsilon,\psi,\phi)\|W-W_n\|_\square \|v\|_\infty.
        \end{split}
    \end{equation}
\end{lem}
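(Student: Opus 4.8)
The plan is to reduce the estimate to the alternative cut-norm $\|\cdot\|_{\square,2}$ of \eqref{eq:2ndcutnorm}. The point is that if the kernel $\psi(x,z)\phi(z,y)$ were of the separated form $a(z)\,b(y)$, then after normalizing $a,b$ by their supremum norms the triple integral would be controlled immediately by $\|a\|_\infty\|b\|_\infty\|v\|_\infty\|W-W_n\|_{\square,2}\le 4\|a\|_\infty\|b\|_\infty\|v\|_\infty\|W-W_n\|_\square$, using the inequality $\|W-W_n\|_{\square,2}\le 4\|W-W_n\|_\square$. The only obstruction to doing this directly is that $\phi$ couples the integration variables $z$ and $y$, and this is resolved exactly as in the proof of Lemma~\ref{lem:WWnappliedtocontissmall}, by approximating $\phi$ uniformly by a separated step function.

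Concretely, I would first dispose of the trivial cases $\psi\equiv 0$ or $v\equiv 0$, in which both sides vanish, and henceforth assume $\|\psi\|_\infty>0$ and $\|v\|_\infty>0$. Fix $\varepsilon>0$ and set $\varepsilon':=\varepsilon/\|\psi\|_\infty$. Since $\phi$ is continuous on the compact square $[0,1]^2$ it is uniformly continuous, so there exist $M=M(\varepsilon,\phi)\in\N$ and constants $\phi_{ij}$ with
\[
\phi(z,y)=\sum_{i,j=1}^M \phi_{ij}\,\zeta_i(z)\,\zeta_j(y)+\Delta\phi(z,y),\qquad \sup_{z,y}|\Delta\phi(z,y)|<\varepsilon',
\]
where $\zeta_i$ is the indicator of $\left[\frac{i-1}{M},\frac{i}{M}\right)$ as in \eqref{eq:zeta}, and one may arrange $\sup_{i,j}|\phi_{ij}|\le\|\phi\|_\infty$. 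Substituting this decomposition into the triple integral and using the triangle inequality splits it into a \emph{step-function part} and a \emph{remainder part}, which I would estimate separately.

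For the step-function part, fix $x\in[0,1]$ and a pair $1\le i,j\le M$. The function $z\mapsto\psi(x,z)\zeta_i(z)$ has supremum norm at most $\|\psi\|_\infty$ for \emph{every} $x$, and $y\mapsto\zeta_j(y)v(y)$ has supremum norm at most $\|v\|_\infty$; normalizing both and applying $\|W-W_n\|_{\square,2}\le 4\|W-W_n\|_\square$ yields
\[
\Bigl|\int_0^1\!\!\int_0^1 [W(z,y)-W_n(z,y)]\,\psi(x,z)\zeta_i(z)\zeta_j(y)v(y)\,\drm z\,\drm y\Bigr|\le 4\|\psi\|_\infty\|v\|_\infty\|W-W_n\|_\square.
\]
Summing this against $|\phi_{ij}|$ over the $M^2$ index pairs bounds the step-function part by $4\|\psi\|_\infty\|v\|_\infty M^2\bigl(\sup_{i,j}|\phi_{ij}|\bigr)\|W-W_n\|_\square$, giving the second term in \eqref{eq:W-Wncontrolled} with $C_2(\varepsilon,\psi,\phi):=4\|\psi\|_\infty M^2\sup_{i,j}|\phi_{ij}|$. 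For the remainder part I would use only the crude pointwise bounds $|W(z,y)-W_n(z,y)|\le 1$, $|\psi(x,z)|\le\|\psi\|_\infty$, $|\Delta\phi(z,y)|<\varepsilon'$, $|v(y)|\le\|v\|_\infty$, together with $|[0,1]^2|=1$, to obtain
\[
\Bigl|\int_0^1\!\!\int_0^1 [W(z,y)-W_n(z,y)]\,\psi(x,z)\Delta\phi(z,y)v(y)\,\drm z\,\drm y\Bigr|\le\varepsilon'\|\psi\|_\infty\|v\|_\infty=\varepsilon\|v\|_\infty.
\]
All of these bounds are uniform in $x\in[0,1]$, so taking the supremum over $x$ and combining the two parts gives \eqref{eq:W-Wncontrolled} (a harmless over-allocation of $\varepsilon$, e.g. replacing $\varepsilon'$ by $\varepsilon/(2\|\psi\|_\infty)$, produces the strict inequality).

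The single point requiring care — and the reason this is not a one-line application of $\|\cdot\|_{\square,2}$ — is the separation of variables in $\phi$: one must make sure that the number of cells $M$, and hence $C_2$, is chosen as a function of $\varepsilon$ and $\phi$ alone and stays fixed as $n\to\infty$. This is guaranteed by the uniform continuity of $\phi$ on $[0,1]^2$, which is an $n$-independent statement, so the constant $C_2(\varepsilon,\psi,\phi)$ is genuinely independent of $n$ as claimed; everything else is routine.
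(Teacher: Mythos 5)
Your proposal is correct and follows essentially the same route as the paper: uniformly approximate $\phi$ by a separated step function over an $M\times M$ grid, control the step part via the alternative norm $\|\cdot\|_{\square,2}\le 4\|\cdot\|_\square$ applied to the separated integrands, and bound the remainder with the crude pointwise estimates, yielding the same constant $C_2=4M^2\sup_{i,j}|\phi_{ij}|\,\|\psi\|_\infty$.
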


\begin{proof}  
The proof of this result mimics the previous lemma.  First, for any $\varepsilon>0$, there exists a $M\in\mathbb{N}$ such that
\begin{equation} \label{eq:phidef}
\phi(z,y)=\sum_{i,j=1}^M \phi_{ij}\zeta_i(z)\zeta_j(y) +\Delta \phi(z,y),  
\end{equation}
with $\zeta_{i,j}$ defined in \eqref{eq:zeta} and with the remainder terms obeying the bound 
\[ 
|\Delta\phi(z,y)|<\frac{\varepsilon}{\|\psi\|_\infty }.
\]  
Here we have implicitly assumed that $\|\psi\|_\infty \neq 0$ as otherwise the result would be trivial.

Now take an arbitrary $x\in [0,1]$. Using the expansion \eqref{eq:phidef}, the integral in \eqref{eq:W-Wncontrolled} can be expressed as 
\begin{eqnarray*}
    \sum_{i,j=1}^M \int_0^1\int_0^1 [W(z,y)-W_n(z,y)]\psi(x,z)\phi_{ij}\zeta_i(z)\zeta_j(y)v(y)\mathrm{d}z\mathrm{d}y \\
    +\int_0^1\int_0^1[W(z,y)-W(z,y)] \psi(x,z)\Delta\phi(z,y)v(y)\mathrm{d}z\mathrm{d}y.  
\end{eqnarray*}
The first integral is in an adequate form for a  direct application of (\ref{eq:2ndcutnorm}) and we obtain 
\[ 
    \begin{split}
        \bigg|\sum_{i,j=1}^M \int_0^1\int_0^1 &[W(z,y)-W_n(z,y)]\psi(x,z)\phi_{ij}\zeta_i(z)\zeta_j(y)v(y)\mathrm{d}z\mathrm{d}y\bigg|
        \\ &\leq  M^2 \sup_{i,j}|\phi_{ij}| \|W-W_n\|_{\square,2} \|\psi\|_\infty \|v\|_\infty.  
        \\ &\leq 4 M^2 \sup_{i,j}|\phi_{ij}| \|W-W_n\|_\square \|\psi\|_\infty \|v\|_\infty.  
    \end{split}
\]
Since the right-hand bound is independent of the $x$ being considered, it follows that taking the supremum over $x \in [0,1]$ over the left-hand-side obeys the same bound. For the integral involving the remainder terms $\Delta \phi$, we can estimate directly using the facts that $|W(x,y) - W_n(x,y)|\leq 1$ and $|\Delta \phi (x,y)|\leq \frac{\varepsilon}{\|\psi\|_\infty}$ for all $(x,y) \in [0,1]\times[0,1]$ to get
\[ 
    \bigg| \int_0^1\int_0^1[W(z,y)-W(z,y)] \psi(x,z)\Delta\phi(z,y)v(y)\mathrm{d}z\mathrm{d}y\bigg| \leq \varepsilon \|v\|_\infty.
\]
We therefore have obtained \eqref{eq:W-Wncontrolled} with $C_2(\varepsilon,\psi,\phi)=4 M^2\sup_{ij} |\phi_{ij}|\|\psi\|_\infty$, which is independent of $n$ as claimed.  
\end{proof}

\subsection{Residual Estimates}\label{sec:keyestimates}

Building towards an application of the contraction mapping theorem, we now obtain estimates on the function $F_n(u^*)$ and the operator $DF(u^*)^{-1}[DF(u^*)-DF_n(u)]$.  Our work here will involve applications of Lemma~\ref{lem:WWnappliedtocontissmall} and Lemma~\ref{lem:W-Wnappliedtwice}.

{\bf Notation}: In the remaining analysis we will consider balls of radius $\rho$ in $X_n$, denoted $B_\rho(u^*)$. We will always assume that $\rho<1$.  From Hypothesis~\ref{hyp:Smooth}, $f$, $D$ and their derivatives are continuous. Taking inputs in a bounded region provides that $f$, $D$ and their derivatives $f'$, $D_1$ and $D_2$ are globally bounded by some uniform constant and Lipschitz continuous. To simplify notation we will let $L > 0$ denote a uniform bound on these functions, their derivatives and their Lipschitz constants over all inputs with $\|u - u^*\|_\infty \leq 1$.  We emphasize that for functions with two inputs this Lipshitz bound is taken to be with respect to the sup norm on $\mathbb{R}^2$ so that, for example,  $|D(a_1,b_1)-D(a_2,b_2)|\leq L \max\{ |a_1-a_2|,|b_1-b_2|\} $.

We begin this section by showing that for $n$ sufficiently large the continuous function $u^*\in C[0,1]$ is an approximate solution to the problem $F_n(u)=0$ in the sense that $\|F_n(u^*)\|_\infty$ is small.  

\begin{lem}\label{lem:Fnsmall} For any $\varepsilon > 0$ there exists an $N\in\mathbb{N}$ such that for all $n\geq N$ 
\[ 
    \|F_n(u^*)\|_\infty<\varepsilon.
\]
\end{lem}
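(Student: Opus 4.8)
\textbf{Proof proposal for Lemma~\ref{lem:Fnsmall}.}

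The plan is to compare $F_n(u^*)$ with $F(u^*)=0$ and bound the difference using the integral estimate of Lemma~\ref{lem:WWnappliedtocontissmall}. Since $f(u^*)$ appears in both $F_n(u^*)$ and $F(u^*)$ and is independent of the graphon, it cancels, and we are left with
\[
    F_n(u^*)(x) = F_n(u^*)(x) - F(u^*)(x) = \int_0^1 \left[W_n(x,y) - W(x,y)\right] D(u^*(x),u^*(y))\,\mathrm{d}y.
\]
So the task reduces to showing that the supremum over $x\in[0,1]$ of the right-hand side can be made smaller than $\varepsilon$ by taking $n$ large.

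The key observation is that the integrand factor $\phi(x,y) := D(u^*(x),u^*(y))$ is a continuous function on $[0,1]\times[0,1]$: indeed $D$ is smooth by Hypothesis~\ref{hyp:Smooth} and $u^*$ is continuous by Hypothesis~\ref{hyp:ContSol}, so the composition is continuous on the compact square and hence uniformly continuous and bounded. This is exactly the hypothesis needed to invoke Lemma~\ref{lem:WWnappliedtocontissmall}. Applying that lemma with this choice of $\phi$ and with parameter $\varepsilon/2$ in place of $\varepsilon$, we obtain a constant $C_1(\varepsilon/2,\phi)>0$, independent of $n$, such that
\[
    \|F_n(u^*)\|_\infty = \sup_{x\in[0,1]}\bigg|\int_0^1 \left[W(x,y)-W_n(x,y)\right]D(u^*(x),u^*(y))\,\mathrm{d}y\bigg| < \frac{\varepsilon}{2} + C_1(\varepsilon/2,\phi)\,\|d_{W_n}-d_W\|_\infty.
\]
Finally, by Hypothesis~\ref{hyp:Graphon}(1) we have $\|d_{W_n}-d_W\|_\infty \to 0$ as $n\to\infty$, so there exists $N\in\mathbb{N}$ such that for all $n\geq N$ we have $C_1(\varepsilon/2,\phi)\,\|d_{W_n}-d_W\|_\infty < \varepsilon/2$, and hence $\|F_n(u^*)\|_\infty < \varepsilon$ for all $n\geq N$, as claimed.

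There is essentially no obstacle here beyond bookkeeping: the whole argument is a one-line cancellation followed by a direct citation of Lemma~\ref{lem:WWnappliedtocontissmall}, with the only thing to check being the continuity of $(x,y)\mapsto D(u^*(x),u^*(y))$, which is immediate. The real work was already done in establishing Lemma~\ref{lem:WWnappliedtocontissmall}; this lemma is just its first application. (One subtlety worth noting: it is crucial that $f$ has no $W$-dependence so that the reaction term drops out — otherwise one would need a separate argument — and that $u^*$ itself, not the step-function approximant, sits inside $F_n$, so that $\phi$ is genuinely continuous rather than merely piecewise continuous.)
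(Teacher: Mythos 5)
Your proof is correct and follows essentially the same route as the paper: cancel $f(u^*)$ against $F(u^*)=0$, apply Lemma~\ref{lem:WWnappliedtocontissmall} with $\phi(x,y)=D(u^*(x),u^*(y))$ and tolerance $\varepsilon/2$, then use Hypothesis~\ref{hyp:Graphon}(1) to absorb the degree-difference term for $n$ large. The remarks on continuity of $\phi$ and the $W$-independence of $f$ are accurate but not points of divergence from the paper's argument.
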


\begin{proof} 
First, Hypothesis~\ref{hyp:ContSol} gives that $F(u^*) = 0$ and so $\|F_n(u^*)\|_\infty = \|F_n(u^*)-F(u^*)\|_\infty$. Since $F_n$ and $F$ only differ by the graphons $W_n$ and $W$, the result holds if we can show 
\begin{equation}\label{Fnu^*_residue}
    \sup_{x\in[0,1]} \bigg| \int_0^1\left[W_n(\cdot,y)-W(\cdot,y)\right]D(u^*(x),u^*(y))\mathrm{d} y \bigg| < \varepsilon .
\end{equation}
To achieve this inequality we simply apply Lemma~\ref{lem:WWnappliedtocontissmall}. Indeed, let $\varepsilon>0$ and take $\phi(x,y)=D(u^*(x),u^*(y))$. Then, there exists a $C>0$, independent of $n$, such that 
\begin{equation}
    \sup_{x\in[0,1]} \bigg| \int_0^1\left[W_n(\cdot,y)-W(\cdot,y)\right]D(u^*(x),u^*(y))\mathrm{d} y \bigg| < \frac{\varepsilon}{2}+C\| d_{W_n}-d_W\|_\infty .
\end{equation}
Since $C$ is independent of $n$, Hypothesis~\ref{hyp:Graphon}(1) guarantees that there is an $N$ sufficiently large so that $C\| d_{W_n}-d_W\|_\infty<\frac{\varepsilon}{2}$ for all $n\geq N$. This therefore completes the proof.  
\end{proof} 

Now that we have shown $F_n(u^*)$ is small in the sup-norm when $n$ is large, we turn to showing something similar for the operator norm of $DF(u^*)^{-1}[DF(u^*)-DF_n(u)]$. To begin, recall that $DF(u^*)$ acts on $v \in X_n$, for any $n \geq 1$, by
\[ 
    DF(u^*)v = -Q(x)v+\int_0^1 W(x,y)D_2(u^*(x),u^*(y)) v(y)\mathrm{d} y. 
\]
Recall further that Hypothesis~\ref{hyp:ContSol} gives that $Q(x)>0$ for all $x \in [0,1]$, while Corollary~\ref{cor:Qcont} shows that $Q$ is continuous.  We will express the linearization $DF_n(u)$ in a similar fashion,
\[ 
    DF_n(u)v=-Q_n(u(x))v+\int_0^1 W_n(x,y)D_2(u(x),u(y)) v(y)\mathrm{d} y,
\]
where $W$ is replaced by $W_n$ in the definition of $Q$ to get $Q_n$. We now proceed by obtaining bounds on the difference $DF(u^*)-DF_n(u)$ in two steps: first showing that $|Q(x)-Q_n(u(x))|$ can be made small by taking $n$ sufficiently large and then considering the (more challenging) integral parts of the operators.  

\begin{lem}\label{lem:Qsmall} 
For any $\varepsilon>0$ there exists a $C>0$, independent of $n$, such that for any $u\in B_1(u^*)\subset X_n$ we have 
\begin{equation} \label{eq:Q-Qn}
    \sup_{x\in [0,1]} | Q(x)-Q_n(u(x))|< \varepsilon+C\|d_W-d_{W_n}\|_\infty+ 2L \|u-u^*\|_\infty, 
\end{equation}
where 
\[ 
Q_n(u(x))=-f'(u(x))-\int_0^1 W_n(x,y)D_1(u(x),u(y))\mathrm{d} y. 
\]
Furthermore, there exists a $\rho>0$ and an $N\in\mathbb{N}$ such that $Q_n(u(x))\neq 0$ for all $x \in [0,1]$, all $n\geq N$, and any $u \in X_n$ with $\|u-u^*\|_\infty<\rho$.
\end{lem}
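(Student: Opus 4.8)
\textbf{Plan of proof for Lemma~\ref{lem:Qsmall}.} The plan is to prove the estimate \eqref{eq:Q-Qn} by splitting $Q(x)-Q_n(u(x))$ into three pieces and bounding each one separately, after which the final nonvanishing claim follows by a continuity/compactness argument.

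First I would write
\[
    Q(x)-Q_n(u(x)) = \big[ f'(u(x)) - f'(u^*(x)) \big] + \int_0^1 W_n(x,y)\big[ D_1(u(x),u(y)) - D_1(u^*(x),u^*(y)) \big]\,\drm y + \int_0^1 \big[ W_n(x,y) - W(x,y)\big] D_1(u^*(x),u^*(y))\,\drm y.
\]
The first bracketed term is bounded by $L\|u-u^*\|_\infty$ using the uniform Lipschitz constant $L$ fixed in the notation preceding the lemma. For the second term, since $|W_n(x,y)| \le 1$ and $D_1$ is Lipschitz with constant $L$ with respect to the sup-norm on $\R^2$, the integrand is bounded pointwise by $L\max\{|u(x)-u^*(x)|,|u(y)-u^*(y)|\} \le L\|u-u^*\|_\infty$, so the whole integral is at most $L\|u-u^*\|_\infty$. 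Adding these gives the $2L\|u-u^*\|_\infty$ contribution. The third term is exactly of the form handled by Lemma~\ref{lem:WWnappliedtocontissmall} with $\phi(x,y) = D_1(u^*(x),u^*(y))$, which is continuous on $[0,1]^2$ by Hypothesis~\ref{hyp:Smooth} and continuity of $u^*$; that lemma yields a constant $C = C_1(\varepsilon,\phi)$, independent of $n$, so that the supremum over $x$ of this term is at most $\varepsilon + C\|d_{W_n}-d_W\|_\infty$. Taking the supremum over $x \in [0,1]$ and combining the three bounds gives \eqref{eq:Q-Qn}.

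For the nonvanishing statement, I would use that $Q(x) \ge q_0 := \min_{x\in[0,1]} Q(x) > 0$, which is positive by Hypothesis~\ref{hyp:ContSol} together with continuity of $Q$ (Corollary~\ref{cor:Qcont}) on the compact interval $[0,1]$. Fix $\varepsilon = q_0/4$ in \eqref{eq:Q-Qn}, obtaining the corresponding constant $C$. By Hypothesis~\ref{hyp:Graphon}(1), choose $N$ large enough that $C\|d_{W_n}-d_W\|_\infty < q_0/4$ for all $n \ge N$, and choose $\rho > 0$ small enough that $2L\rho < q_0/4$. Then for all $n \ge N$ and all $u \in X_n$ with $\|u-u^*\|_\infty < \rho$ we get $\sup_x |Q(x)-Q_n(u(x))| < 3q_0/4$, so $Q_n(u(x)) > Q(x) - 3q_0/4 \ge q_0/4 > 0$ for every $x \in [0,1]$, which is the claim.

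I do not anticipate a genuine obstacle here; the only subtlety worth care is making sure the term involving $W_n - W$ is integrated against a \emph{continuous} (in particular $n$-independent) function so that Lemma~\ref{lem:WWnappliedtocontissmall} applies with a constant independent of $n$ — this is why the middle term must be peeled off first, isolating the clean $\phi = D_1(u^*(\cdot),u^*(\cdot))$ dependence, rather than trying to control $\int (W_n-W) D_1(u(x),u(y))$ directly (whose integrand depends on $n$ through $u \in X_n$).
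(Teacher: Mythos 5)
Your proposal is correct and follows essentially the same route as the paper: the identical three-term decomposition obtained by adding and subtracting $W_n(x,y)D_1(u^*(x),u^*(y))$, the same Lipschitz bounds on the first two pieces, an application of Lemma~\ref{lem:WWnappliedtocontissmall} to the third, and the same $Q_*/4$-style budget (the paper takes $\rho = Q_*/(8L)$, matching your $2L\rho < q_0/4$) for the nonvanishing claim. No gaps.
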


\begin{proof} 
First, adding and subtracting the term $W_n(x,y)D_1(u^*(x),u^*(y))$ and using the triangle inequality, for any $x \in [0,1]$ we have the bound
\[
    \begin{split}
        | Q(x)-Q_n(u(x))| &\leq |f'(u^*(x))-f'(u(x))| + \bigg|\int_0^1 W_n(x,y)\left[D_1(u(x),u(y))-D_1(u^*(x),u^*(y))\right]\mathrm{d} y \bigg| \\
        &+ \bigg|\int_0^1 \left[W_n(x,y)-W(x,y)\right] D_1(u^*(x),u^*(y))\mathrm{d} y \bigg|.
    \end{split}
\]
Then, Lipschitz continuity of $f'$ implies that 
\[ 
    |f'(u^*(x))-f'(u(x))| \leq L|u^*(x)-u(x)|\leq L\|u-u^*\|_\infty,
\]
for all $x \in [0,1]$. Then, since $0\leq W_n(x,y)\leq 1$ and $D_1$ is Lipschitz (since it is twice differentiable) it holds that 
\[  
    \sup_{x\in [0,1]} \bigg|\int_0^1 W_n(x,y)\left[D_1(u(x),u(y))-D_1(u^*(x),u^*(y))\right]\mathrm{d} y \bigg| \leq L\| u-u^* \|_\infty. 
\]
Finally, using the fact that $D_1(u^*(x),u^*(y))$ is continuous, we can appeal to Lemma~\ref{lem:WWnappliedtocontissmall} which guarantees that for any $\varepsilon>0$ there exists a constant $C_1(\varepsilon,D_1(u^*(x),u^*(y)))>0$, independent of $n$, for which 
\[  
    \sup_{x\in [0,1]} \bigg|\int_0^1 \left[W_n(x,y)-W(x,y)\right] D_1(u^*(x),u^*(y))\mathrm{d} y \bigg| <\varepsilon +C_1\|d_W-d_{W_n}\|_\infty . 
\]
Upon summing up the three bounds above, the estimate \eqref{eq:Q-Qn} then follows.  

We now turn to proving the second statement in the lemma. Since $Q(x)$ is continuous and non-zero we can also guarantee that $Q_n(u(x))\neq 0$ for any $u$ sufficiently close to $u^*$  and any $n$ sufficiently large. Specifically, let 
\[
    Q_*=\min_{x \in [0,1]} Q(x).
\]
Then, one can take $\varepsilon=Q_*/4$, $\rho=\frac{Q_*}{8L}$ and $N$ sufficiently large so that $C\|d_W-d_{W_n}\|_\infty<Q_*/4$ in \eqref{eq:Q-Qn}. This gives that $|Q(x)-Q_n(u(x))|<3Q_*/4$, which further implies $Q_n(u(x))>Q_*/4>0$. This concludes the proof. 
\end{proof}

We now turn our attention to the integral portion of the operator  $DF(u^*)-DF_n(u)$.  This part of the operator is expressed by the difference 
\[ 
    \int_0^1 \left[ W(x,y)D_2(u^*(x),u^*(y))-W_n(x,y)D_2(u(x),u(y))\right] v(y) \mathrm{d}y.
\]
We add and subtract $W_n(x,y)D(u^*(x),u^*(y))$ and separate this integral into two components: the first taking the form 
\begin{equation} \label{eq:inteasy} 
    \int_0^1 W_n(x,y)\left[D_2(u^*(x),u^*(y))-D_2(u(x),u(y))\right]v(y)\mathrm{d}y
\end{equation} 
and the second being
\begin{equation} \label{eq:inthard} 
    \int_0^1 \left[W(x,y)-W_n(x,y)\right]D_2(u^*(x),u^*(y))v(y)\mathrm{d}y .
\end{equation}
Bounds on the norm of \eqref{eq:inteasy} are presented in the following lemma. 

\begin{lem}\label{lem:inteasy}
    Suppose $u\in B_1(u^*)$ and $v\in X_n$, for any $n \geq 1$. Then 
    \begin{equation} \label{eq:WnD2-D2} 
        \sup_{x\in[0,1]} \bigg|\int_0^1 W_n(x,y)\left[D_2(u^*(x),u^*(y))-D_2(u(x),u(y))\right]v(y)\mathrm{d}y\bigg| \leq L \|u-u^*\|_\infty \|v\|_\infty  
    \end{equation}
\end{lem}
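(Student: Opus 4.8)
The plan is to bound the integrand pointwise in $y$ and then integrate, exploiting only the Lipschitz continuity of $D_2$ together with the uniform bound $0 \le W_n(x,y) \le 1$ coming from $W_n$ being a graphon. Fix $n \ge 1$, $u \in B_1(u^*) \subset X_n$, $v \in X_n$, and an arbitrary $x \in [0,1]$. For each $y \in [0,1]$ the factor $W_n(x,y)$ lies in $[0,1]$, so the absolute value of the integrand is at most $|D_2(u^*(x),u^*(y)) - D_2(u(x),u(y))| \cdot |v(y)|$.

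Next I would invoke the Lipschitz bound on $D_2$ with the sup-norm convention on $\mathbb{R}^2$ recorded in the Notation paragraph: since $(u^*(x),u^*(y))$ and $(u(x),u(y))$ differ by at most $\max\{|u^*(x)-u(x)|,\, |u^*(y)-u(y)|\} \le \|u - u^*\|_\infty$ (note $\|u-u^*\|_\infty \le 1$, so we are inside the region where $L$ is a valid Lipschitz constant), we get
\[
    |D_2(u^*(x),u^*(y)) - D_2(u(x),u(y))| \le L\,\|u - u^*\|_\infty
\]
for every $x,y \in [0,1]$. Combining this with $|v(y)| \le \|v\|_\infty$ bounds the integrand by $L\,\|u-u^*\|_\infty\,\|v\|_\infty$ uniformly in $y$.

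Finally I would integrate this constant bound over $y \in [0,1]$, which has measure one, to obtain
\[
    \bigg|\int_0^1 W_n(x,y)\big[D_2(u^*(x),u^*(y)) - D_2(u(x),u(y))\big] v(y)\,\mathrm{d}y\bigg| \le L\,\|u - u^*\|_\infty\,\|v\|_\infty.
\]
Since the right-hand side is independent of $x$, taking the supremum over $x \in [0,1]$ on the left preserves the inequality, giving \eqref{eq:WnD2-D2}.

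There is no real obstacle here: unlike the companion term \eqref{eq:inthard}, this estimate does not involve the difference $W - W_n$ at all, so no cut-norm or degree-convergence machinery (Lemmas~\ref{lem:WWnappliedtocontissmall} or \ref{lem:W-Wnappliedtwice}) is needed. The only points requiring a little care are (i) checking that $\|u - u^*\|_\infty \le 1$ so the global Lipschitz constant $L$ applies, and (ii) using the sup-norm (rather than Euclidean) form of the Lipschitz inequality on $\mathbb{R}^2$, exactly as set up in the Notation paragraph; both are immediate from the hypotheses of the lemma.
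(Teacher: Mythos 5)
Your proposal is correct and follows exactly the same route as the paper's (one-line) proof: bound $W_n$ by $1$, apply the sup-norm Lipschitz bound on $D_2$ using $u \in B_1(u^*)$, bound $|v(y)|$ by $\|v\|_\infty$, and integrate over the unit interval. You have simply written out in full the steps the paper leaves implicit, including the two minor checks about the Lipschitz convention; nothing is missing.
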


\begin{proof}
    Since $0\leq W_n(x,y)\leq 1$, $D_2$ is Lipschitz and $u\in B_1(u^*)$ we obtain the stated estimate directly.
\end{proof}

Having bounded \eqref{eq:inteasy}, bounding the integral \eqref{eq:inthard} remains.  As we will demonstrate, it is not possible, in general, to make the operator  norm of this operator small through some combination of taking $\|u-u^*\|_\infty$ small or $n$ large. 

To illustrate why this could be the case, consider a fixed $x\in [0,1]$ and the function  $v\in B_\rho(0)$ defined by
\[ 
    v(y)= \rho \ \mathrm{sign}\ \left( \left[W_n(x,y)-W(x,y)\right]D_2(u^*(x),u^*(y))\right).
\]
Then \eqref{eq:inthard} with this choice of $v$ will not necessarily be small relative to the magnitude of $v$\footnote{If $D_2(u^*(x),u^*(y))$ changes sign then $v$ may not be an element of $X_n$ as it fails to be continuous on some sub-interval.  Nonetheless, for $n$ sufficiently large one can approximate this function arbitrarily well as $n\to\infty$ with elements of $X_n$ and the issue remains for large $n$. }.  Looking ahead, control of this integral will be key in our attempt to show that $\mathcal{T}_n$ is a contraction mapping and demonstrating that this operator has a norm that cannot  exceed one turns out to be something we cannot show with the level of generality endowed by Hypotheses~\ref{hyp:Smooth}-\ref{hyp:ContSol}.  

Dealing with the operator in \eqref{eq:inthard} will be the main technical issue moving forward.  For that reason, and since we will have to keep careful track of this term in the the subsequent analysis we introduce the following notation for the operator, 
\begin{equation}\label{eq:Xidefn} 
    \Xi(W_n,W,u^*)v = \int_0^1 \left[W_n(x,y)-W(x,y)\right]D_2(u^*(x),u^*(y))v(y)\mathrm{d} y.  
\end{equation}
With this notation, we gather the estimates from our work so far in the following corollary.

\begin{cor}\label{cor:DF-DFn}   
For any $\varepsilon>0$ there exists a $C_1>0$, independent of $n$, such that for any $n\geq 1$, $u\in B_1(u^*)\subset X_n$, and $v\in X_n$ it holds that  
    \begin{equation}
        [DF(u^*)-DF_n(u)]v=\Xi(W_n,W,u^*)v+R_D[v]
    \end{equation}
    where 
    \[ 
    \|R_D[v]\|_\infty\leq \left(\varepsilon+C_1(\varepsilon,D_1)\|d_W-d_{W_n}\|_\infty+ 3L \|u-u^*\|_\infty\right) \|v\|_\infty 
    \]
\end{cor}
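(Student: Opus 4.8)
The plan is to decompose the operator difference $[DF(u^*)-DF_n(u)]v$ into its multiplication part and its integral part, exactly following the breakdown already set up in the text, and then collect the bounds proven in Lemmas~\ref{lem:Qsmall} and~\ref{lem:inteasy} together with the definition \eqref{eq:Xidefn}. Write
\[
    [DF(u^*)-DF_n(u)]v = \big(Q_n(u(x))-Q(x)\big)v(x) + \int_0^1\!\big[W(x,y)D_2(u^*(x),u^*(y)) - W_n(x,y)D_2(u(x),u(y))\big]v(y)\,\mathrm{d}y,
\]
and then add and subtract $W_n(x,y)D_2(u^*(x),u^*(y))$ inside the integral to split it into the term \eqref{eq:inteasy} and the term \eqref{eq:inthard}. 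By definition, \eqref{eq:inthard} is precisely $\Xi(W_n,W,u^*)v$, so everything else is declared to be $R_D[v]$.

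Next I would bound $\|R_D[v]\|_\infty$ by the triangle inequality applied to its three constituent pieces. The multiplication piece $\big(Q_n(u(x))-Q(x)\big)v(x)$ is bounded in sup norm by $\big(\sup_{x}|Q(x)-Q_n(u(x))|\big)\|v\|_\infty$, and Lemma~\ref{lem:Qsmall} controls that supremum by $\varepsilon + C\|d_W-d_{W_n}\|_\infty + 2L\|u-u^*\|_\infty$ for a constant $C=C_1(\varepsilon,D_1)$ independent of $n$. The term \eqref{eq:inteasy} is bounded by $L\|u-u^*\|_\infty\|v\|_\infty$ by Lemma~\ref{lem:inteasy}. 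Summing these two contributions yields the stated bound with the coefficient $3L$ on $\|u-u^*\|_\infty$ (namely $2L$ from the multiplication part plus $L$ from \eqref{eq:inteasy}) and the same $\varepsilon$ and $C_1(\varepsilon,D_1)\|d_W-d_{W_n}\|_\infty$ terms. One small bookkeeping point: Lemma~\ref{lem:Qsmall} produces $\varepsilon$, and one should just call that the $\varepsilon$ in the corollary statement (or, if one wants to be careful, apply Lemma~\ref{lem:Qsmall} with $\varepsilon$ in place of the corollary's $\varepsilon$, since the constant $C_1$ is allowed to depend on $\varepsilon$ anyway).

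There is essentially no real obstacle here — the corollary is a packaging statement that isolates the genuinely problematic operator $\Xi(W_n,W,u^*)$ (the one that cannot be made small by Lemma~\ref{lem:W-Wnappliedtwice}-type arguments at the single-integral level) from a remainder $R_D$ that \emph{is} controllable by making $\rho$ small and $n$ large. The only mild care needed is to make sure the constant $C_1$ in the corollary is the one coming from Lemma~\ref{lem:Qsmall} (which itself inherits it from Lemma~\ref{lem:WWnappliedtocontissmall} applied with $\phi(x,y)=D_1(u^*(x),u^*(y))$), so that it is manifestly independent of $n$ and of $u$, and to note that all norms involved are the common $C[0,1]$/sup norm on every $X_n$, so the estimate is uniform in $n$. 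I would close by remarking that $\Xi$ itself is bounded in operator norm by $\sup_x\int_0^1 |W_n(x,y)-W(x,y)|\,|D_2(u^*(x),u^*(y))|\,\mathrm{d}y \le L$, so $R_D$ together with $\Xi$ gives a genuine (if not yet small) operator-norm bound on $DF(u^*)-DF_n(u)$, which is what the subsequent sections \ref{sec:Tanal}--\ref{sec:Sanal} will exploit.
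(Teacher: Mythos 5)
Your proposal is correct and matches the paper's own (implicit) argument: the corollary is stated in the text as a direct aggregation of Lemma~\ref{lem:Qsmall} and Lemma~\ref{lem:inteasy} after the add-and-subtract of $W_n(x,y)D_2(u^*(x),u^*(y))$, with the $3L$ coefficient arising exactly as you say ($2L$ from the $Q$-difference plus $L$ from \eqref{eq:inteasy}). The only cosmetic point is a sign convention: \eqref{eq:Xidefn} is written with $W_n-W$ while \eqref{eq:inthard} has $W-W_n$, an inconsistency already present in the paper and irrelevant to the norm estimates.
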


We now build upon Corollary~\ref{cor:DF-DFn} with the goal of understanding the operator $ DF(u^*)^{-1}\left[ DF(u^*)-DF_n(u) \right] $, for various choices of $u$. This operator will appear later when applying the contraction mapping theorem and so the following result provides an expansion of this operator for any $u\in B_1(u^*)\subset X_n$.

\begin{lem} \label{lem:DFdiffdef} 
    Let $\varepsilon>0$. Then, there exists positive constants $C_1(\varepsilon)$ and $C_2(\varepsilon)$, independent of $n$, such that for any $n\geq 1$, $u\in B_1(u^*)\subset X_n$, and $v\in X_n,$ it holds that 
    \begin{equation}  \label{eq:DFinvDF-DFn} DF(u^*)^{-1}\left[ DF(u^*)-DF_n(u) \right] v =-\frac{\Xi(W_n,W,u^*)}{Q(x)}v+R_u[v],  \end{equation}
where 
\begin{equation} \label{eq:Rbounds}  
    \left\| R_u[v]\right\|_\infty\leq \left(\varepsilon+C_1\|d_W-d_{W_n}\|_\infty + C_2\|W-W_n\|_\square + 3Lm\|u-u^*\|_\infty\right) \|v\|_\infty 
\end{equation}
\end{lem}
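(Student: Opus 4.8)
The plan is to decompose $DF(u^*)^{-1}$ using the Neumann-type expansion from Lemma~\ref{lem:MinvProjected}, apply it to the identity from Corollary~\ref{cor:DF-DFn}, and track the single term that cannot be controlled by the cut-norm alone --- namely the one arising from applying $DF(u^*)^{-1}$ to the leftover operator $\Xi(W_n,W,u^*)$. Concretely, write $[DF(u^*)-DF_n(u)]v = \Xi(W_n,W,u^*)v + R_D[v]$ as in Corollary~\ref{cor:DF-DFn}. Applying $DF(u^*)^{-1}$ and using the uniform bound $\|DF(u^*)^{-1}\|_{X_n\to X_n}\le m$ from Lemma~\ref{lem:mbound}, the contribution of $R_D$ is at once absorbed into $R_u$, contributing $m(\varepsilon + C_1\|d_W-d_{W_n}\|_\infty + 3L\|u-u^*\|_\infty)\|v\|_\infty$; after relabelling $\varepsilon$ this gives the $\varepsilon$, $C_1\|d_W-d_{W_n}\|_\infty$, and $3Lm\|u-u^*\|_\infty$ terms in \eqref{eq:Rbounds}. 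So the real work is to show that
\[
    DF(u^*)^{-1}\Xi(W_n,W,u^*)v = -\frac{\Xi(W_n,W,u^*)}{Q(x)}v + (\text{a term bounded by } \varepsilon\|v\|_\infty + C_2\|W-W_n\|_\square\|v\|_\infty).
\]

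To extract the leading term, recall from Section~\ref{sec:DFinv} that inverting $DF(u^*)$ means dividing by $-Q(x)$ and solving the Fredholm equation $(I-T_K)\tilde v = -w/Q$; writing $w=\Xi(W_n,W,u^*)v$, the zeroth-order term of the Neumann series is exactly $-w/Q = -\Xi(W_n,W,u^*)v/Q(x)$, which is the explicit term appearing in \eqref{eq:DFinvDF-DFn}. Every remaining term in the expansion \eqref{eq:Minvgen} --- the tail $\sum_{k\ge 1}(\cdots)^k\tilde P(\cdot)$ of the Neumann series, plus the finite-rank part involving the spectral projection $P$ --- applies at least one copy of $T_K$ (an integral operator against the continuous kernel $W(x,y)D_2(u^*(x),u^*(y))/Q(x)$) to $\Xi(W_n,W,u^*)v$, which itself is an integral against $W-W_n$. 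The composition therefore has the shape
\[
    \int_0^1\int_0^1 [W_n(z,y)-W(z,y)]\,\psi(x,z)\,\phi(z,y)\,v(y)\,\mathrm{d}z\,\mathrm{d}y,
\]
with $\phi(z,y)=D_2(u^*(z),u^*(y))$ continuous and $\psi(x,z)$ bounded (it is the kernel of whatever operator $H$, $(I-T_K)^{-1}$, or the finite-rank projection gets composed on the left, and by Lemma~\ref{lem:mbound} and Lemma~\ref{lem:MinvProjected} these all have uniformly bounded kernels/norms). Lemma~\ref{lem:W-Wnappliedtwice} is precisely designed for this: it gives $\sup_x|\cdots| < \varepsilon\|v\|_\infty + C_2(\varepsilon,\psi,\phi)\|W-W_n\|_\square\|v\|_\infty$ with $C_2$ independent of $n$. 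Summing the (geometrically convergent, by the spectral-radius bound in Lemma~\ref{lem:mbound}) contributions from all such terms and the finitely many finite-rank terms yields a single constant $C_2$ and a single $\varepsilon$, which after one final relabelling gives \eqref{eq:Rbounds}.

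The step I expect to be the main obstacle is verifying that the "left" kernels $\psi(x,z)$ really are uniformly bounded in $n$ and, more delicately, that they can be taken genuinely as \emph{kernels} of integral operators rather than abstract bounded operators --- in particular the operator $H = (I-T_K)^{-1}|_{\tilde X_n}\tilde P$ is only known a priori to be a bounded operator on $C[0,1]$, so one must argue that $H\circ T_K$ (which is what actually multiplies $\Xi$, since $V(w) = H[T_K(w)]$) composed with one more copy of $T_K$ produces an honest bounded kernel, or alternatively restructure the bookkeeping so that Lemma~\ref{lem:W-Wnappliedtwice} is applied to $T_K\,\Xi(W_n,W,u^*)v$ first --- which \emph{is} of the required double-integral form with $\psi \equiv $ (kernel of $T_K$) bounded and $\phi$ continuous --- and only afterwards is the bounded operator $H$ (or $(I-T_K)^{-1}$, or the finite-rank projector) applied, using that these are bounded $C[0,1]\to C[0,1]$ uniformly in $n$. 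Getting the order of composition right so that the cut-norm-controlled double integral is always the innermost object, with only uniformly bounded operators applied outside it, is the crux; once that is arranged the estimate is a routine assembly of Lemma~\ref{lem:mbound}, Lemma~\ref{lem:MinvProjected}, Corollary~\ref{cor:DF-DFn}, and Lemma~\ref{lem:W-Wnappliedtwice}.
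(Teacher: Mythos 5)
Your proposal is correct and follows essentially the same route as the paper: decompose via Corollary~\ref{cor:DF-DFn}, absorb $R_D$ with the uniform bound $m$ from Lemma~\ref{lem:mbound}, extract $-\Xi(W_n,W,u^*)v/Q$ from the $\tilde P$ term of \eqref{eq:Minvgen}, and control the finite-rank terms and the Neumann tail with Lemma~\ref{lem:W-Wnappliedtwice}. The "obstacle" you flag is resolved exactly as you suggest in your alternative: the paper writes the Neumann tail as $\tilde P w + H[T_K w]$, applies Lemma~\ref{lem:W-Wnappliedtwice} to the inner double integral $T_K(-\Xi v/Q)$, and only then applies the abstract bounded operator $H$ on the outside.
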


\begin{proof} 
In Lemma~\ref{lem:MinvProjected}, we showed that $DF(u^*)$ is invertible as a linear operator acting on $X_n$, for any $n\geq 1$. Furthermore, Lemma~\ref{lem:mbound} showed that the operator norm of the inverse has a uniform bound independent of $n$.  

Now, according to Corollary~\ref{cor:DF-DFn}, we have $[DF(u^*)-DF_n(u)]v=\Xi(W_n,W,u^*)v+R_D[v]$ and so we seek to estimate $DF(u^*)^{-1}\left(\Xi(W_n,W,u^*)v+R_D[v]\right)$. 
Using the uniform bound on \eqref{eq:DFinvbd} and Corollary~\ref{cor:DF-DFn}, for any $\varepsilon_{D}>0$ we have that there exists a $C_1(\varepsilon_{D},D_1) > 0$, independent of $n$, so that
\[ 
    \|DF(u^*)^{-1}R_D[v]\|_\infty\leq \left(m\varepsilon_D+mC_1(\varepsilon_D,D_1)\|d_W-d_{W_n}\|_\infty+3mL \|u-u^*\|_\infty\right) \|v\|_\infty. 
\]
We thus have reduced the problem to estimating $DF(u^*)^{-1}\Xi(W_n,W,u^*)v$, which consists of two pieces. Recall the formula for $DF(u^*)^{-1}$ presented in (\ref{eq:Minvgen}).  First, using Lemma~\ref{lem:MinvProjected} we consider 
\[ 
    \sum_{j=1}^J\sum_{k,l=1}^{m_j} c_{j,k,l}\varphi_{j,k}(x)\int_0^1 \psi_{j,l}(z)\left(\frac{-\Xi(W_n,W,u^*)v}{Q(z)}\right)\mathrm{d}z. 
\]
For any $j,k,l$, the corresponding term in the sum, after expanding $\Xi$ according to its definition \eqref{eq:Xidefn}, takes the form
\[ 
    c_{j,k,l} \int_0^1\int_0^1 [W_n(z,y)-W(z,y)] \varphi_{j,k}(x) \psi_{j,l}(z)\frac{D_2(u^*(z),u^*(y))}{Q(z)} v(y)\mathrm{d}y\mathrm{d}z.
\]
For each $j,k,l$, consider an arbitrary $\varepsilon_{j,k,l}>0$. Since $\frac{D_2(u^*(z),u^*(y))}{Q(z)}$ is continuous, Lemma~\ref{lem:W-Wnappliedtwice} guarantees the existence of a $C_{j,k,l} > 0$ which is independent of $n$ so that 
\begin{eqnarray} 
    \left\| c_{j,k,l} \int_0^1\int_0^1 [W_n(z,y)-W(z,y)] \varphi_{j,k}(x) \psi_{j,l}(z)\frac{D_2(u^*(z),u^*(y))}{Q(z)}v(y) \mathrm{d}y \mathrm{d}z\right\|_\infty \nonumber  \\
    \leq \left(
    \varepsilon_{j,k,l}+C_{j,k,l}
    \|W-W_n\|_\square\right) \|v\|_\infty.
\end{eqnarray}
The constant  $C_{j,k,l}$ depends on $\varepsilon_{j,k,l}$, $\phi(z,y)=\frac{D_2(u^*(z),u^*(y))}{Q(z)}$, and $\psi(x,z)=c_{j,k,l}\varphi_{j,k}(x)\psi_{j,l}(z)$, but we suppress this dependence in the remainder of this proof.  We emphasize once again that $C_{j,k,l}$ is independent of $n$.  
Summing over all $j,k,l$ provides the bound 
\begin{equation} 
    \begin{split}
        \bigg\| \sum_{j=1}^J\sum_{k,l=1}^{m_j} c_{j,k,l}\varphi_{j,k}(x) &\int_0^1 \psi_{j,l}(z)\left(\frac{-\Xi(W_n,W,u^*)v}{Q(z)}\right)\mathrm{d}y\mathrm{d}z\bigg\|_\infty \\
        &\leq \sum_{j=1}^J\sum_{k,l=1}^{m_j} \left(
    \varepsilon_{j,k,l}+C_{j,k,l}  \|W-W_n\|_\square\right) \|v\|_\infty.
    \end{split}
\end{equation}

It now remains to estimate the contribution stemming from  the Neumann series portion of $DF(u^*)^{-1}$; see again (\ref{eq:Minvgen}). 
\begin{equation}\label{eq:MinvKeyPart}
    \frac{1}{1+\xi}\sum_{k=0}^\infty \left(\frac{T_K+\xi}{1+\xi}\right)^k \tilde{P} \left(\frac{-\Xi(W_n,W,u^*)v}{Q(\cdot)}\right). 
\end{equation}
Using the decomposition of the operator obtained in Lemma~\ref{lem:mbound} we have
\[ 
    \frac{1}{1+\xi}\sum_{k=0}^\infty \left(\frac{T_K+\xi}{1+\xi}\right)^k \tilde{P}w=\tilde{P}w+H[T_Kw],
\]
with $H$ a bounded operator on $C[0,1]$ (recall additionally that $T_K:X_n\to C[0,1]$ is bounded) with $\|Hw\|_\infty\leq b \|w\|_\infty$ for all $w \in C[0,1]$. Expanding the action of $T_K$ gives
\[ 
    T_K \left(\frac{-\Xi(W_n,W,u^*)v}{Q(\cdot)} \right)=\int_0^1\int_0^1  \frac{W(x,z)}{Q(x)} D_2(u^*(x),u^*(z))\frac{W_n(z,y)-W(z,y)}{Q(z)} D_2(u^*(z),u^*(y))v(y)\mathrm{d}y\mathrm{d}z. 
\]
Then Lemma~\ref{lem:W-Wnappliedtwice} gives that for any $\varepsilon_T>0$, there exists $C_T$ that depends on $\varepsilon_T > 0$ (we again suppress the dependence of $C_T$ on the terms appearing in the previous integrand), but not $n$, so that 
\[ 
    \left\| H\left[T_K\left(\frac{-\Xi(W_n,W,u^*)v}{Q(\cdot)} \right)\right]\right\|_\infty \leq \left(b\varepsilon_T+bC_T \|W-W_n\|_\square\right) \|v\|_\infty. 
\]
Finally, we have only to consider 
\[ 
    \tilde{P}\left(-\frac{\Xi(W_n,W,u^*)v}{Q(\cdot)}\right) =-\frac{\Xi(W_n,W,u^*)v}{Q(\cdot)} +P\left(\frac{\Xi(W_n,W,u^*)v}{Q(\cdot)}\right). 
\]
This first term is precisely the leading order estimate of $DF(u^*)^{-1}\left[ DF(u^*)-DF_n(u) \right]$ provided in the statement of the lemma that we wish to obtain. We therefore focus on the second part of the expression. 

Working term-by-term in the definition of $P$, we again apply Lemma~\ref{lem:W-Wnappliedtwice} and obtain, for any $\varepsilon_{j,k}>0$ there exists $C_{j,k}$, depending on $\varepsilon_{j,k}$ but not $n$, so that 
\[ 
    \left\|P\left(\frac{\Xi(W_n,W,u^*)v}{Q(\cdot)}\right)\right\|_\infty \leq \sum_{j=1}^J \sum_{k=1}^{m_j} \left(\varepsilon_{j,k}+C_{j,k}\|W-W_n\|_\square \right) \|v\|_\infty 
\]
Hence, combining all of our estimates we have that the expression \eqref{eq:DFinvDF-DFn} holds with 
\begin{eqnarray} \label{eq:Rvestimate}
    \|R_u[v]\|&\leq& \left( m\varepsilon_D+\sum_{j,k,l} \varepsilon_{j,k,l} +b\varepsilon_T+\sum_{j,k}\varepsilon_{j,k}\right)\|v\|_\infty +mC_1(\varepsilon_D,D_1)\|d_W-d_{W_n}\|_\infty)\|v\|_\infty  \nonumber \\
    &+&   \left( \sum_{j,k,l} C_{j,k,l} +bC_T+\sum_{j,k}C_{j,k}\right)\|W-W_n\|_\square \|v\|_\infty+3Lm\|u-u^*\|_\infty\|v\|_\infty.
\end{eqnarray}
Moreover, for any $\varepsilon>0$  we can select the (finite collection of) constants $\varepsilon_D$, $\varepsilon_{j,k,l}$, $\varepsilon_T$, $\varepsilon_{j,k} > 0$ in a manner such that  
\[ 
    \left( m\varepsilon_D+\sum_{j,k,l} \varepsilon_{j,k,l} +b\varepsilon_T+\sum_{j,k}\varepsilon_{j,k}\right)<\varepsilon.
\]
With these values fixed, the constant $C_1$ and all constants $C_{j,k,l}$, $C_T$ and $C_{j,k}$ are fixed quantities.  We therefore aggregate the constants appearing in \eqref{eq:Rvestimate} to obtain the estimate \eqref{eq:Rbounds}, concluding the proof.
\end{proof}

\subsection{Analysis of $\mathcal{T}_n$}
\label{Analysis of T}\label{sec:Tanal}

Having presented the necessary preliminary results, we now return to our analysis of the operator $\mathcal{T}_n[u]$. In general, this operator will fail to be a contraction on $X_n$.  To establish that $\mathcal{T}_n$ is a contraction mapping it is required that its operator norm is strictly less than one.  However, as we now demonstrate, there is no reason that this bound should hold in general.  The problem arises due to the term
\[ 
    \frac{\Xi(W_n,W,u^*)}{Q(x)}(u_1-u_2) = \frac{1}{Q(x)}\int_0^1 \left[W_n(x,y)-W(x,y)\right]D_2(u^*(x),u^*(y))(u_1(y)-u_2(y))\mathrm{d} y,
\]
coming from Lemma~\ref{lem:DFdiffdef}, which arises in the expansion of $\mathcal{T}_n[u_1]-\mathcal{T}_n[u_2]$ for general $u_1,u_2 \in X_n$. Letting 
\begin{equation}  \label{eq:eta}
    \eta=\max\left\{ 1, \sup_{x\in[0,1]} \left(\frac{1}{Q(x)}\int_0^1 | D_2(u^*(x)),u^*(y))|\mathrm{d}y \right) \right\} 
\end{equation} 
it can be shown that $\eta$ provides a coarse bound on the operator norm of $\frac{\Xi(W_n,W,u^*)}{Q(\cdot)}$, which will not be less than one in general.  

Our main result regarding $\mathcal{T}_n$ is the following Lemma.   

\begin{lem}\label{lem:T maps Bp to Bnp} 
    Let $\eta$ be defined as defined in \eqref{eq:eta}. Then, there exists a positive $\rho^*\leq \frac{1}{2\eta}$ such that for any $\rho \in (0,\rho^*)$ there exists an $N(\rho) \geq 1$ such that for any $n\geq N(\rho)$ the operator $\mathcal{T}_n$ maps the ball $B_\rho(u^*) \subset X_n$ into the ball $B_{2\eta \rho}(u^*)\subset B_1(u^*) \subset X_n$, i.e. $\mathcal{T}_n[B_\rho(u^*)] \subseteq B_{2\eta \rho}(u^*)$.  
\end{lem}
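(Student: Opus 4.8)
The plan is to show $\mathcal{T}_n$ maps $B_\rho(u^*)$ into $B_{2\eta\rho}(u^*)$ by writing, for $u \in B_\rho(u^*)$,
\[
    \mathcal{T}_n[u] - u^* = \left(\mathcal{T}_n[u] - \mathcal{T}_n[u^*]\right) + \left(\mathcal{T}_n[u^*] - u^*\right),
\]
and bounding each piece separately in the $X_n$ norm. For the second term, $\mathcal{T}_n[u^*] - u^* = -DF(u^*)^{-1}F_n(u^*)$, so Lemma~\ref{lem:mbound} gives $\|\mathcal{T}_n[u^*] - u^*\|_\infty \leq m\|F_n(u^*)\|_\infty$, and Lemma~\ref{lem:Fnsmall} makes this arbitrarily small by taking $n$ large. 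For the first term, I would use the mean value form
\[
    \mathcal{T}_n[u] - \mathcal{T}_n[u^*] = \int_0^1 \left(I - DF(u^*)^{-1}DF_n(u^* + t(u - u^*))\right)(u - u^*)\,\drm t,
\]
noting $I - DF(u^*)^{-1}DF_n(z) = DF(u^*)^{-1}[DF(u^*) - DF_n(z)]$, which is exactly the operator analyzed in Lemma~\ref{lem:DFdiffdef} with $z = u^* + t(u-u^*) \in B_\rho(u^*) \subset B_1(u^*)$.

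Applying Lemma~\ref{lem:DFdiffdef} to each $z$ in the segment, the leading term is $-\frac{\Xi(W_n,W,u^*)}{Q(x)}(u - u^*)$, whose sup-norm is bounded by $\eta\|u - u^*\|_\infty \leq \eta\rho$ by the definition of $\eta$ in \eqref{eq:eta} (using $|W_n - W| \leq 1$ and $|D_2(u^*(x),u^*(y))| $ integrated against $1/Q(x)$). The remainder $R_z[u-u^*]$ satisfies, by \eqref{eq:Rbounds},
\[
    \|R_z[u - u^*]\|_\infty \leq \left(\varepsilon + C_1\|d_W - d_{W_n}\|_\infty + C_2\|W - W_n\|_\square + 3Lm\|z - u^*\|_\infty\right)\|u - u^*\|_\infty.
\]
Since $\|z - u^*\|_\infty \leq \rho$, the bracketed quantity is at most $\varepsilon + C_1\|d_W - d_{W_n}\|_\infty + C_2\|W - W_n\|_\square + 3Lm\rho$. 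Integrating over $t \in [0,1]$ preserves these bounds, so altogether
\[
    \|\mathcal{T}_n[u] - u^*\|_\infty \leq \eta\rho + \left(\varepsilon + C_1\|d_W - d_{W_n}\|_\infty + C_2\|W - W_n\|_\square + 3Lm\rho\right)\rho + m\|F_n(u^*)\|_\infty.
\]

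To close the argument I would first fix $\rho^* \leq \frac{1}{2\eta}$ small enough that $3Lm\rho^* < \eta/4$ (say), and also small enough that $2\eta\rho^* < 1$ so the target ball sits inside $B_1(u^*)$. Then, given any $\rho \in (0,\rho^*)$, choose $\varepsilon < \eta/4$ in the application of Lemma~\ref{lem:DFdiffdef} (this fixes $C_1, C_2$), and invoke Hypothesis~\ref{hyp:Graphon}(1) together with Lemma~\ref{lem:Fnsmall} to pick $N(\rho)$ so that for all $n \geq N(\rho)$ we have $C_1\|d_W - d_{W_n}\|_\infty < \eta\rho/4$, $C_2\|W - W_n\|_\square < \eta\rho/4$, and $m\|F_n(u^*)\|_\infty < \eta\rho/4$. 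Then the bracketed coefficient is at most $\eta/2$ and the additive term at most $\eta\rho/4$, giving $\|\mathcal{T}_n[u] - u^*\|_\infty \leq \eta\rho + \eta\rho/2 + \eta\rho/4 < 2\eta\rho$, as desired. The main subtlety is bookkeeping the order of quantifiers: the constants $C_1, C_2$ from Lemma~\ref{lem:DFdiffdef} depend on the chosen $\varepsilon$ but not on $n$, so $\varepsilon$ (hence $C_1, C_2$) must be fixed before $N(\rho)$ is chosen; since $\rho^*$ only needs the $n$-independent quantities $\eta, L, m$, it can be fixed at the very start, and everything is consistent.
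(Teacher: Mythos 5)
Your proof is correct and follows essentially the same route as the paper: both decompose $\mathcal{T}_n[u]-u^*$ into the residual $-DF(u^*)^{-1}F_n(u^*)$ plus a term of the form $DF(u^*)^{-1}\left[DF(u^*)-DF_n(z)\right](u-u^*)$, bound the former via Lemmas~\ref{lem:mbound} and \ref{lem:Fnsmall}, bound the latter via Lemma~\ref{lem:DFdiffdef} with the leading $\Xi/Q$ contribution controlled by $\eta\rho$, and then fix $\rho^*$, $\varepsilon$, and $N(\rho)$ in that order (your integral form of the mean value theorem is in fact a slightly more careful rendering of the paper's pointwise MVT step). One minor arithmetic imprecision: with your choices the bracketed coefficient is bounded by $\eta/2+\eta\rho/2$ rather than $\eta/2$, but the final estimate still closes below $2\eta\rho$ because $\rho<\rho^*\leq \tfrac{1}{2\eta}\leq \tfrac12$ (recall $\eta\geq 1$ by \eqref{eq:eta}).
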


\begin{proof}  
For an $n\geq 1$ and $\rho > 0$, consider a $u\in B_\rho(u^*)\subset X_n$. Using the mean-value theorem we can expand 
\begin{equation} \label{eq:Tminusustar}
    \begin{split}
        \mathcal{T}_n[u]-u^*&= u-u^* -DF(u^*)^{-1}F_n(u) \\ 
        &=u-u^* -DF(u^*)^{-1}F_n(u^*)-DF(u^*)^{-1}DF_n(z)(u-u^*) \\
        &= -DF(u^*)^{-1}F_n(u^*)+DF(u^*)^{-1}\left(DF(u^*)-DF_n(z)\right) (u-u^*),
    \end{split}
\end{equation}
for some $z\in B_\rho(u^*)$. We use the uniform bound for $DF(u^*)^{-1}$ guaranteed by Lemma~\ref{lem:mbound} and apply  Lemma~\ref{lem:Fnsmall} to obtain that for any $\varepsilon_1>0$ there exists a $N_1 = N_1(\varepsilon_1)$ such that for any $n\geq N_1$ it holds that 
\[ 
    \|DF(u^*)^{-1} F_n(u^*)\|_\infty\leq m \varepsilon_1. 
\]
Next, Lemma~\ref{lem:DFdiffdef} guarantees that for any $\varepsilon_2>0$ there exists $C_1(\varepsilon_2), C_2(\varepsilon_2) > 0$, independent of $n$, so that 
\begin{equation}
    \begin{split}
        \|DF(u^*)^{-1} & \left(DF(u^*)-  DF_n(z)\right) (u-u^*)\|_\infty \\ &\leq \left\| \frac{\Xi(W_n,W,u^*)}{Q(\cdot)}\right\|_{X_n\to X_n }\|u-u^*\|_\infty \\ &+\left(\varepsilon_2+C_1(\varepsilon_2)\|d_W-d_{W_n}\|_\infty + C_2(\varepsilon_2)\|W-W_n\|_\square\right)\|u-u^*\|_\infty \\
        &+ 3Lm\|z-u^*\|_\infty \|u-u^*\|_\infty
    \end{split}
\end{equation}
Recall from the beginning of this subsection that $\eta$ is defined in \eqref{eq:eta} so that

\begin{equation} \label{Xi less than eta}
    \left\| \frac{\Xi(W_n,W,u^*)}{Q(\cdot)}\right\|_{X_n\to X_n }\leq \eta.
\end{equation}

Thus, we will have derived that $\mathcal{T}_n:B_\rho\to B_{2\eta\rho}$ if we can show that
\begin{equation} \label{eq:Tinequality}
    m\varepsilon_1+\left(\varepsilon_2+C_1(\varepsilon_2)\|d_W-d_{W_n}\|_\infty + C_2(\varepsilon_2)\|W-W_n\|_\square \\
    + 3Lm\rho\right) \rho<\eta\rho, 
\end{equation}
where we have used the fact that $z\in B_\rho(u^*)$, making $\|z - u^*\|_\infty < \rho$.  
To guarantee that \eqref{eq:Tinequality} holds, take
\begin{equation}\label{eq:rhostar} 
    \rho^*\leq \min\left\{ \frac{1}{2\eta}, \frac{\eta}{9Lm}\right\}.  
\end{equation}
and consider any $\rho\in (0,\rho^*)$. Since $\varepsilon_1$ and $\varepsilon_2$ were arbitrary, we take them to satisfy $\varepsilon_1<\frac{\rho \eta}{6m}$ and $\varepsilon_2<\frac{\eta}{6}$. With $\varepsilon_2$ chosen $C_1(\varepsilon_2)$ and $C_2(\varepsilon_2)$ are fixed constants and we can take $N(\rho)\geq N_1(\varepsilon_1)$ so that inequality (\ref{eq:Tinequality}) is satisfied and the result follows.  
\end{proof}

\begin{rmk}
    If we were able to show that the operator norm of $\Xi(W_n,W,u^*)/Q$ on $X_n$ was less than one, potentially for all suitably large $n$, then it would be possible to show that $\mathcal{T}_n$ is a contraction. As argued at the beginning of this section  with the introduction of the operator $\mathcal{T}_n$ this in turn would give the existence of a solution to $F_n(u)=0$. As we will see in the examples studied in Section~\ref{sec:examples} this operator norm is not necessarily less than 1 and so we will proceed in the general setting of Lemma~\ref{lem:T maps Bp to Bnp} and turn our attention to the second iterate of $\mathcal{T}_n$ which we prove be a contraction on a sufficiently small ball centered at $u^*$ in $X_n$.  
\end{rmk}

\subsection{Analysis of $\mathcal{S}_n$}\label{sec:Sanal}

In the previous subsection we highlighted our inability to prove that $T_n$ is a contraction on any suitably small ball in $X_n$ centered at $u^*$. We therefore turn our attention to demonstrating that $\mathcal{S}_n = \mathcal{T}_n\circ\mathcal{T}_n$, the composition of $\mathcal{T}_n$ with itself, is a contraction mapping on $B_\rho(u^*)$ with sufficiently small $\rho > 0$ and large $n\geq 1$. On top of this result, we further prove that it is also a contraction on the larger ball $B_{2\eta\rho}(u^*)$ from Lemma~\ref{lem:T maps Bp to Bnp}. This will allow us to conclude in the next subsection that $\mathcal{T}_n$ has a unique fixed point in the smaller ball $B_\rho(u^*)$, thus achieving the goal in introducing the operator $\mathcal{T}_n$ at the onset of this section.

We begin with the following lemma.

\begin{lem} \label{lem:Scontraction}
    There exists a $\rho_S > 0$ such that for every $\rho\in (0,\rho_S)$ there exists a $N(\rho) \geq 1$ such that for any $n\geq N(\rho)$ the operator $\mathcal{S}_n:B_\rho(u^*)\cap X_n\to B_\rho(u^*)\cap X_n$ is a contraction mapping.    
\end{lem}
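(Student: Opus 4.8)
The plan is to show that, for sufficiently small $\rho$ and large $n$, the second iterate $\mathcal{S}_n=\mathcal{T}_n\circ\mathcal{T}_n$ both carries the (closed) ball $B_\rho(u^*)\cap X_n$ into itself and contracts it, and then to invoke the contraction mapping theorem on this complete metric space. First I fix $\rho_S\le\rho^*$, with $\rho^*$ and $\eta$ as in Lemma~\ref{lem:T maps Bp to Bnp}; then for $\rho\in(0,\rho_S)$ and $n$ past the threshold of that lemma, $\mathcal{T}_n$ maps $B_\rho(u^*)$ into $B_{2\eta\rho}(u^*)\subseteq B_1(u^*)\subseteq X_n$ (note $2\eta\rho<2\eta\rho^*\le 1$). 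Since each application of $\mathcal{T}_n$ keeps us inside $X_n$, this makes $\mathcal{S}_n$ well defined on $B_\rho(u^*)\cap X_n$ with values in $X_n$, and puts us in the regime where Lemma~\ref{lem:DFdiffdef} is applicable to both applications of $\mathcal{T}_n$.

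For the contraction estimate, take $u_1,u_2\in B_\rho(u^*)$ and set $v_i=\mathcal{T}_n[u_i]\in B_{2\eta\rho}(u^*)$. Using the integrated mean-value identity $F_n(w_1)-F_n(w_2)=\big(\int_0^1 DF_n(w_2+t(w_1-w_2))\,\drm t\big)(w_1-w_2)$ together with Lemma~\ref{lem:DFdiffdef} — whose leading term $-\Xi(W_n,W,u^*)/Q$ is independent of the base point and therefore survives the $t$-integration unchanged — I obtain
\begin{equation*}
v_1-v_2=-\tfrac{\Xi(W_n,W,u^*)}{Q}(u_1-u_2)+\widehat{R}(u_1-u_2),\qquad \mathcal{S}_n[u_1]-\mathcal{S}_n[u_2]=-\tfrac{\Xi(W_n,W,u^*)}{Q}(v_1-v_2)+\widetilde{R}(v_1-v_2),
\end{equation*}
where, by \eqref{eq:Rbounds}, both $\|\widehat{R}(w)\|_\infty$ and $\|\widetilde{R}(w)\|_\infty$ are bounded by $\big(\varepsilon+C_1\|d_W-d_{W_n}\|_\infty+C_2\|W-W_n\|_\square+6Lm\eta\rho\big)\|w\|_\infty$ for $n$-independent constants $C_1,C_2$ and an $\varepsilon>0$ I am free to choose small (using $\eta\ge 1$ and $\|z-u^*\|_\infty<2\eta\rho$ for the base points). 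Substituting the first identity into the second,
\begin{equation*}
\mathcal{S}_n[u_1]-\mathcal{S}_n[u_2]=\Big(\tfrac{\Xi(W_n,W,u^*)}{Q}\Big)^{2}(u_1-u_2)\;-\;\tfrac{\Xi(W_n,W,u^*)}{Q}\widehat{R}(u_1-u_2)\;+\;\widetilde{R}(v_1-v_2).
\end{equation*}
The last two terms are harmless: since $\|\Xi(W_n,W,u^*)/Q\|_{X_n\to X_n}\le\eta$ by \eqref{Xi less than eta} and $\|v_1-v_2\|_\infty\le 2\eta\|u_1-u_2\|_\infty$ (the error coefficient above being $<1\le\eta$ for $\rho$ small and $n$ large), their combined contribution to the Lipschitz constant is at most $2\eta\big(\varepsilon+C_1\|d_W-d_{W_n}\|_\infty+C_2\|W-W_n\|_\square+6Lm\eta\rho\big)$, which I make $<\tfrac14$ by first shrinking $\rho,\varepsilon$ and then enlarging $n$ through Hypothesis~\ref{hyp:Graphon}(1).

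The entire point of passing to the second iterate is the first term. Writing it as a repeated integral,
\begin{equation*}
\Big(\tfrac{\Xi(W_n,W,u^*)}{Q}\Big)^{2}w(x)=\int_0^1\!\!\int_0^1\big[W_n(z,y)-W(z,y)\big]\,\psi(x,z)\,\phi(z,y)\,w(y)\,\drm z\,\drm y,
\end{equation*}
with $\phi(z,y)=D_2(u^*(z),u^*(y))/Q(z)$ continuous and $\psi(x,z)=[W_n(x,z)-W(x,z)]D_2(u^*(x),u^*(z))/Q(x)$ bounded and, crucially, satisfying $\|\psi\|_\infty\le L/Q_*$ \emph{uniformly in $n$} (where $Q_*=\min_x Q(x)>0$, using $|W_n-W|\le1$). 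This is exactly the structure handled by Lemma~\ref{lem:W-Wnappliedtwice}, and the constant it produces depends on $\psi$ only through $\|\psi\|_\infty$ (see its proof), so it can be taken $n$-independent; thus for any $\varepsilon_3>0$,
\begin{equation*}
\sup_{x\in[0,1]}\Big|\Big(\tfrac{\Xi(W_n,W,u^*)}{Q}\Big)^{2}w(x)\Big|\le\big(\varepsilon_3+\overline{C}_2\,\|W-W_n\|_\square\big)\|w\|_\infty,
\end{equation*}
and choosing $\varepsilon_3$ small and $n$ large makes this $<\tfrac12\|w\|_\infty$. Adding the three pieces gives $\|\mathcal{S}_n[u_1]-\mathcal{S}_n[u_2]\|_\infty\le\kappa\|u_1-u_2\|_\infty$ with $\kappa<1$ for all $n\ge N(\rho)$. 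For the self-mapping, the same expansions combined with Lemma~\ref{lem:Fnsmall} show $\|\mathcal{S}_n[u^*]-u^*\|_\infty\to0$ as $n\to\infty$, so for $n$ large it is $<(1-\kappa)\rho$, whence $\|\mathcal{S}_n[u]-u^*\|_\infty\le\kappa\|u-u^*\|_\infty+\|\mathcal{S}_n[u^*]-u^*\|_\infty\le\kappa\rho+(1-\kappa)\rho=\rho$.

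The main obstacle is not any single estimate but the structural observation underlying the whole argument: $\mathcal{T}_n$ need not contract because $\|\Xi(W_n,W,u^*)/Q\|$ can only be bounded by $\eta$ (possibly $\ge1$), whereas its square is a double integral against $W_n-W$ in one slot with the other slot's difference absorbed into a bounded weight, so that its sup-norm is governed by $\|W_n-W\|_\square\to0$. The step requiring the most care is the constant bookkeeping: verifying that every constant emerging from Lemmas~\ref{lem:DFdiffdef} and \ref{lem:W-Wnappliedtwice} is genuinely independent of $n$ — in particular that the weight $\psi$ in the main term, though itself $n$-dependent, enters the relevant constant only through its $n$-uniform sup-norm $L/Q_*$.
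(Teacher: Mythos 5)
Your proposal is correct and takes essentially the same route as the paper's proof: expand both applications of $\mathcal{T}_n$ via the mean value theorem and Lemma~\ref{lem:DFdiffdef}, control the dominant squared operator $\left(\Xi(W_n,W,u^*)/Q\right)^2$ through Lemma~\ref{lem:W-Wnappliedtwice} and the cut norm (noting, as you correctly emphasize, that the resulting constant depends on the $n$-dependent weight only through its $n$-uniform sup-norm), and absorb the remainders using $\|\Xi(W_n,W,u^*)/Q\|\leq \eta$ together with smallness of $\rho$, $\|d_W-d_{W_n}\|_\infty$, and $\|W-W_n\|_\square$. The only differences are organizational and harmless: you establish the Lipschitz estimate first and deduce the self-mapping property from $\|\mathcal{S}_n[u]-u^*\|_\infty\leq\kappa\|u-u^*\|_\infty+\|\mathcal{S}_n[u^*]-u^*\|_\infty$ rather than expanding $\mathcal{S}_n[u]-u^*$ directly as the paper does, and your combined remainder coefficient should read $3\eta$ rather than $2\eta$, a slip that does not affect the conclusion.
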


\begin{proof}
The proof is broken down into two components: first showing that $\mathcal{S}_n:B_\rho(u^*)\cap X_n\to B_\rho(u^*)\cap X_n$ is well-posed and second showing that it is a contraction. 

{\bf Well-posedness}:  $\mathcal{S}_n:B_{\rho}(u^*)\cap X_n\to B_{\rho}(u^*)\cap X_n$

For any $n\geq 1$ and $\rho > 0$, consider $u\in B_\rho(u^*)\cap X_n$. From Lemma~\ref{lem:T maps Bp to Bnp} we have that there exists a $\rho^* > 0$ so that if we restrict $\rho \in (0,\rho^*)$ and take $n$ sufficiently large we guarantee that  $\mathcal{T}_n[u]\in B_{2\eta\rho}(u^*)\subset B_1(u^*)$, where we recall that $\eta$ is defined in \eqref{eq:eta}. Keeping these restrictions on $\rho$ and $n$, we use the mean value theorem to arrive at the expansion
\begin{eqnarray}\label{Su - ustar}
    \mathcal{S}_n[u]-u^*&=&\mathcal{T}_n\circ \mathcal{T}_n[u]-u^*\nonumber \\
    &=&\mathcal{T}_n[u]-u^*-DF(u^*)^{-1}F_n(\mathcal{T}_n[u]) \nonumber \\
    &=& \mathcal{T}_n[u]-u^*-DF(u^*)^{-1}F_n(u^*)-DF(u^*)^{-1}DF_n(z_1)\left(\mathcal{T}_n[u]-u^*\right) \nonumber  \\
    &=& [I - DF(u^*)^{-1}DF_n(z_1)](\mathcal{T}_n[u] - u^*) - DF(u^*)^{-1}F_n(u^*) \nonumber \\
    &=& DF(u^*)^{-1}\left[DF(u^*)-DF_n(z_1)\right]\left(\mathcal{T}_n[u]-u^*\right) - DF(u^*)^{-1}F_n(u^*) 
\end{eqnarray}
for some $z_1\in B_{2\eta\rho}(u^*)$. 

First, recall that there exists a $z_2\in B_\rho(u^*)$ such that  
\begin{equation}
    \begin{split}
        \mathcal{T}_n[u]-u^*&= DF(u^*)^{-1}(DF(u^*)-DF_n(z_2))(u-u^*)-DF(u^*)^{-1}F_n(u^*) \\
        &= -\frac{\Xi(W_n,W,u^*)}{Q(\cdot)} (u-u^*) - DF(u^*)^{-1}F_n(u^*) +R_{z_2}[u-u^*], 
    \end{split}
\end{equation}
see the
expansion of $\mathcal{T}_n[u]$ derived in \eqref{eq:Tminusustar} combined with the estimate of $DF(u^*)^{-1}(DF(u^*)-DF_n(z_2))(u-u^*)$ provided in Lemma~\ref{lem:DFdiffdef}. 
Then 
\begin{equation}\label{eq:Sexp}
    \begin{split}
        \mathcal{S}_n[u]-u^*&= \frac{\Xi(W_n,W,u^*)}{Q(\cdot)}\frac{\Xi(W_n,W,u^*)}{Q(\cdot)}(u-u^*) +\frac{\Xi(W_n,W,u^*)}{Q(\cdot)} DF(u^*)^{-1}F_n(u^*) \\
        &\quad - \frac{\Xi(W_n,W,u^*)}{Q(\cdot)}R_{z_2}[u-u^*] +R_{z_1}\left[\mathcal{T}_n[u]-u^*\right] - DF(u^*)^{-1}F_n(u^*) 
    \end{split}
\end{equation}
Consider the first term which consists of two-fold application of the operator $\frac{\Xi(W,W_n,u^*)}{Q(\cdot)}$. To condense notation, we will write $\tilde{W}_n(x,y)=W_n(x,y)-W(x,y)$. Then, 
\[  
    \begin{split}
    &\frac{\Xi(W_n,W,u^*)}{Q(\cdot)}\frac{\Xi(W_n,W,u^*)}{Q(\cdot)}(u-u^*) \\
    &= \int_0^1 \tilde{W}_n(x,z)\frac{D_2(u^*(x),u^*(z))}{Q(x)} \int_0^1 \tilde{W}_n(z,y)\frac{D_2(u^*(z),u^*(y))}{Q(z)}\left(u(y)-u^*(y)\right) \mathrm{d} y\mathrm{d} z.
    \end{split}
\]
The continuity of $D_2(u^*(z),u^*(y))$ allows for the application of Lemma~\ref{lem:W-Wnappliedtwice}, which implies that for any $\varepsilon_1>0$ there exists a constant $C_\Xi(\varepsilon_1)$ 
(again suppressing the functional dependence), independent of $n$, such that  
\begin{equation} \label{eq:Xi Squared}
    \left\| \left(\frac{\Xi(W,W_n,u^*)}{Q(x)} \right)\left( \frac{\Xi(W,W_n,u^*)}{Q(z)} \right) (u-u^*)\right\|_\infty\leq \left(\varepsilon_1+C_\Xi(\varepsilon_1) \|W-W_n\|_\square \right) \|u-u^*\|_\infty.  
\end{equation}

We now simultaneously consider the second and final terms on the right hand side of Equation \eqref{eq:Sexp}.  From Lemma~\ref{lem:Fnsmall} we have that there exists an $\varepsilon_2>0$ and a $N_2 = N_2(\varepsilon_2)$ such that for any $n\geq N_2$ we have
\[ 
    \left\|\left(I-\frac{\Xi(W,W_n,u^*)}{Q(x)} \right) DF(u^*)^{-1}F_n(u^*) \right\|_\infty \leq (1+\eta) m \varepsilon_2,   
\]
where we have used that $\eta$, defined in \eqref{eq:eta}, is a bound for the operator norm of $\frac{\Xi(W_n,W,u^*)}{Q(\cdot)}$ on $X_n$.
Next, for any $\varepsilon_3>0$, Lemma~\ref{lem:DFdiffdef} gives that there exists constants $C_1(\varepsilon_3)$ and $C_2(\varepsilon_3)$ such that 
\[ 
    \begin{split}
        &\left\|\left(\frac{\Xi(W,W_n,u^*)}{Q(x)} \right) R_{z_2}[u-u^*] \right\|_\infty \\ &\quad \leq \eta \left(\varepsilon_3+C_1(\varepsilon_3)\|d_W-d_{W_n}\|_\infty + C_2(\varepsilon_3)\|W-W_n\|_\square + 3Lm\|z_2-u^*\|_\infty\right) \|u-u^*\|_\infty. 
    \end{split}
\]
Finally, we have that 
\[ 
    \begin{split}
        &\left\|R_{z_1}\left[\mathcal{T}_n[u]-u^*\right]]\right\|_\infty \\ 
        & \quad \leq \left(\varepsilon_4+C_1(\varepsilon_4)\|d_W-d_{W_n}\|_\infty + C_2(\varepsilon_4)\|W-W_n\|_\square + 3Lm\|z_1-u^*\|_\infty\right) \|\mathcal{T}_n[u]-u^*\|_\infty.  
    \end{split}
\]
Combining these estimates and recalling that $u\in B_\rho(u^*) \cap X_n$, $z_1\in B_{2\eta \rho}(u^*) \cap X_n$, $z_2\in B_\rho(u^*) \cap X_n$ and $\mathcal{T}_n[u]\in B_{2\eta\rho}(u^*) \cap X_n$ we get 
\begin{equation}
    \begin{split}
    \|\mathcal{S}_n[u]-u^*\|_\infty&\leq \varepsilon_1\rho+(1+\eta) m \varepsilon_2+\eta\varepsilon_3\rho+2\varepsilon_4\eta \rho \\
    &+ (\eta \rho C_1(\varepsilon_3) +2\eta\rho C_1(\varepsilon_4)) \|d_W-d_{W_n}\|_\infty \\
    &+ (\rho C_\Xi(\varepsilon_1)+\eta \rho C_2(\varepsilon_3) +2\eta\rho C_2(\varepsilon_4)) \|W-W_n\|_\square \\
    &+ (3Lm+12L\eta^2m) \rho^2
    \end{split}  
\end{equation}
Select $\rho_S > 0$ so that  
\begin{equation}\label{eq:rhoS} 
    \rho_S < \frac{1}{2}\min\left\{ \rho^*,\frac{1}{3(3Lm +12L\eta^2 m)},\frac{1}{9Lm(1+\eta+2\kappa\eta)}\right\},  
\end{equation}
where $\kappa>3Lm\rho^*$ is a fixed constant arising in (\ref{eq:coarseRz2bound}).

Now consider any $\rho\in (0,\rho_S)$. Take $\varepsilon_{1,2,3,4} > 0$ so that $\varepsilon_1\rho+(1+\eta) m \varepsilon_2+\eta\varepsilon_3\rho+2\varepsilon_4\eta\rho<\frac{\rho}{3}$.  Then with $\varepsilon_{1,2,3,4}(\rho)$ fixed, we can select $N(\rho)$ sufficiently large so that for any $n\geq N(\rho)$ it holds that 
\[ 
    \|\mathcal{S}_n[u]-u^*\|_\infty\leq \frac{\rho}{3}+\frac{\rho}{3}+\frac{\rho}{3}=\rho 
\]
for all $u \in B_{\rho}(u^*)\cap X_n$. Thus, $\mathcal{S}_n: B_{\rho}(u^*)\cap X_n \to B_{\rho}(u^*)\cap X_n$ is well-defined.

{\bf Contraction}: We now proceed to show that $\mathcal{S}_n$, for $\rho$ suffiently small and $n$ sufficiently large,  is a contraction on $B_{\rho}(u^*) \cap X_n$. As above, we will consider $\rho\in (0,\rho_S)$ and let $u_{1,2}\in B_{\rho}(u^*) \cap X_n$.  Then, consider 
\[ 
    \left\| \mathcal{S}_n[u_1]-\mathcal{S}_n[u_2]\right\|_\infty = \left\| DF(u^*)^{-1}\left[DF(u^*)(\mathcal{T}_n[u_1]-\mathcal{T}_n[u_2]) - F_n(\mathcal{T}_n[u_1]) +F_n(\mathcal{T}_n[u_2])\right]\right\|_\infty.  
\]
The mean value theorem guarantees that there exists some $z_1\in B_{2\eta \rho}(u^*)$ such that 
\[ 
    F_n(\mathcal{T}_n[u_1]) - F_n(\mathcal{T}_n[u_2])=DF_n(z_1) \left( \mathcal{T}_n[u_1]-\mathcal{T}_n[u_2] \right),
\]
so that we then have 
\[ 
    \mathcal{S}_n[u_1]-\mathcal{S}_n[u_2] = DF(u^*)^{-1}\left(DF(u^*)-DF_n(z_1)\right) (\mathcal{T}_n[u_1]-\mathcal{T}_n[u_2]) .   
\]
Similarly, we have that there exists a $z_2\in B_\rho(u^*)$ such that 
\[ 
    \mathcal{T}_n[u_1]-\mathcal{T}_n[u_2] = DF(u^*)^{-1}\left(DF(u^*)-DF_n(z_2)\right)(u_1-u_2).  
\]
Using Lemma~\ref{lem:DFdiffdef}, we then have that
\[ 
    \mathcal{S}_n[u_1]-\mathcal{S}_n[u_2] = \left(\left(\frac{\Xi(W,W_n,u^*)}{Q(\cdot)} \right)+R_{z_1}\right)\left(\left(\frac{\Xi(W,W_n,u^*)}{Q(\cdot)} \right)+R_{z_2}\right)(u_1-u_2). 
\]

The analysis now resembles that performed above to demonstrate well-posedness. Indeed, for any $\varepsilon_1>0$ , there exist a constant $C_\Xi(\varepsilon_1)$, independent of $n$, such that 
\[ 
    \left\| \left(\frac{\Xi(W,W_n,u^*)}{Q(x)} \right)\left( \frac{\Xi(W,W_n,u^*)}{Q(z)} \right) (u_1-u_2)\right\|_\infty\leq \left(\varepsilon_1+C_\Xi(\varepsilon_1) \|W-W_n\|_\square \right) \|u_1-u_2\|_\infty.  
\]
For any $\varepsilon_2>0$ , there exist constants $C_1(\varepsilon_2)$ and $C_2(\varepsilon_2)$, both independent of $n$, such that 
\[ 
    \begin{split}
        &\left\|\left(\frac{\Xi(W,W_n,u^*)}{Q(x)} \right) R_{z_2}[u_1-u_2] \right\|_\infty \\
        & \quad \leq \eta \left(\varepsilon_2+C_1(\varepsilon_2)\|d_W-d_{W_n}\|_\infty + C_2(\varepsilon_2)\|W-W_n\|_\square + 3Lm\|z_2-u^*\|_\infty\right) \|u_1-u_2\|_\infty. 
    \end{split}
\]
And again, for any $\varepsilon_3>0$ , there exist constants $C_1(\varepsilon_3)$ and $C_2(\varepsilon_3)$, both independent of $n$, such that
\[ 
    \begin{split}
        &\left\| R_{z_2} \left(\frac{\Xi(W,W_n,u^*)}{Q(x)} \right)[u_1-u_2] \right\|_\infty \\ 
        & \quad\leq  \eta \left(\varepsilon_3+C_1(\varepsilon_3)\|d_W-d_{W_n}\|_\infty + C_2(\varepsilon_3)\|W-W_n\|_\square + 3Lm\|z_1-u^*\|_\infty\right) \|u_1-u_2\|_\infty. 
    \end{split}
\]

The final term involves the composition of the operators: $R_{z_1}( R_{z_2}[u_1-u_2])$, which each satisfy estimate 
 \[  \left\| R_u[v]\right\|_\infty\leq \left(\varepsilon+C_1(\varepsilon)\|d_W-d_{W_n}\|_\infty + C_2(\varepsilon)\|W-W_n\|_\square + 3Lm\|u-u^*\|_\infty\right) \|v\|_\infty,  \]
recall \eqref{eq:Rbounds}. 
Using that $z_2\in B_{\rho}(u^*)\subset B_{\rho^*}(u^*)$ we can obtain -- for a fixed $\varepsilon_b>0$ and $N(\varepsilon_b)$ sufficiently large that there exists $\kappa\geq 4Lm\rho^*$ such that 
\begin{equation} \label{eq:coarseRz2bound}
    \|R_{z_2}[u_1-u_2]\|_\infty \leq \kappa \|u_1-u_2\|_\infty. 
\end{equation}
Then for any $\varepsilon_4>0$ the following estimate holds 
\begin{equation}
\begin{split}
     &\left\| R_{z_1}( R_{z_2}[u_1-u_2]) \right\|_\infty
     \\ &\leq \left(\varepsilon_4+C_1(\varepsilon_4)\|d_W-d_{W_n}\|_\infty + C_2(\varepsilon_4)\|W-W_n\|_\square + 3Lm\|z_1-u^*\|_\infty\right)\left\|R_{z_2}[u_1-u_2]\right\|_\infty \\
     & \leq \kappa \left(\varepsilon_4+C_1(\varepsilon_4)\|d_W-d_{W_n}\|_\infty + C_2(\varepsilon_4)\|W-W_n\|_\square + 3Lm\|z_1-u^*\|_\infty\right)\left\|u_1-u_2\right\|_\infty
     \end{split}
\end{equation}

Aggregating the estimates above and recalling that $z_1\in B_{2\eta\rho}(u^*)$ while $z_2\in B_{\rho}(u^*)$ we arrive at
\begin{equation}
    \begin{split}
    \left\|\mathcal{S}_n[u_1]-\mathcal{S}_n[u_2]\right\|_\infty &\leq \left(\varepsilon_1+\eta\varepsilon_2+\eta\varepsilon_3+\kappa \varepsilon_4 \right) \|u_1-u_2\|_\infty \\
    &+ \left(\eta C_1(\varepsilon_2)+\eta C_1(\varepsilon_3)+\kappa C_1(\varepsilon_4)\right)\|d_W-d_{W_n}\|_\infty \|u_1-u_2\|_\infty \\
    &+ \left(C_\Xi(\varepsilon_1)+\eta C_2(\varepsilon_2)+\eta C_2(\varepsilon_3)+\kappa C_2(\varepsilon_4)\right) \|W-W_n\|_\square \|u_1-u_2\|_\infty \\
    &+ 3Lm(1+2\eta+2\kappa\eta) \rho  \|u_1-u_2\|_\infty.
    \end{split}
\end{equation}
To obtain a contraction on $B_\rho(u^*)$ we first notice; 
\[ 3Lm(1+2\eta +2\kappa \eta)\rho<\frac{1}{3},\]
see again (\ref{eq:rhoS}).    Next, we select 
$\varepsilon_{1,2,3,4}$ in such a way that 
\[ \varepsilon_1+\eta\varepsilon_2+\eta\varepsilon_3+\kappa \varepsilon_4<\frac{1}{3}. \]
With these quantities fixed, the constants $C_\Xi(\varepsilon_1)$ and  $C_{1,2}(\varepsilon_j)$ become fixed and therefore by taking $N(\rho)$ sufficiently large we can guarantee that 
\[ 
    \left\|\mathcal{S}_n[u_1]-\mathcal{S}_n[u_2]\right\|_\infty \leq  \frac{3}{4}  \|u_1-u_2\|_\infty,
\]
showing that $\mathcal{S}_n$ is a contraction mapping on the ball $B_\rho(u^*) \cap X_n$.
\end{proof}

The contraction mapping theorem in conjunction with the previous lemma immediately implies the existence of a unique fixed point of the operator $\mathcal{S}_n$ in the ball $B_\rho(u^*)$.  We state this fact as the following corollary. 

\begin{cor} \label{Banach FPT p}
    There exists a $\rho_S > 0$ such that for any $\rho \in (0,\rho_S)$ there exists a $N(\rho) \geq 1$ such that for any $n\geq N$ there exists a unique function $u_n^*(x)\in X_n$ which is a fixed point of the mapping $\mathcal{S}_n[u_n]=u_n$ and satisfies
    \[ 
        \sup_{x\in [0,1]} | u_n^*(x)-u^*(x)|< \rho.  
    \]
\end{cor}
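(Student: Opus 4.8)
The plan is to deduce the statement directly from Lemma~\ref{lem:Scontraction} together with the Banach contraction mapping theorem; almost all of the analytic work has already been done, so what remains is to set up the correct complete metric space and to sharpen the radius bound from non-strict to strict. First I would record that $X_n$, equipped with the supremum norm, is a Banach space: it is a closed subspace of $L^\infty$, since a uniform limit of functions that are continuous on each interval $[(i-1)/n,i/n)$ is again continuous on each such interval. Consequently, for any $\rho>0$ the closed ball $\overline{B_\rho(u^*)}\cap X_n$ is a closed subset of a Banach space and hence a complete metric space under the metric induced by $\|\cdot\|_\infty$.

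Next, fix $\rho_S>0$ as furnished by Lemma~\ref{lem:Scontraction} and let $\rho\in(0,\rho_S)$. That lemma provides an $N(\rho)\geq 1$ so that for every $n\geq N(\rho)$ the map $\mathcal{S}_n$ sends $B_\rho(u^*)\cap X_n$ into itself and satisfies $\|\mathcal{S}_n[u_1]-\mathcal{S}_n[u_2]\|_\infty\leq \tfrac{3}{4}\|u_1-u_2\|_\infty$ there; by continuity of $\mathcal{S}_n$ in the sup norm the same two properties hold on the closed ball $\overline{B_\rho(u^*)}\cap X_n$. Thus $\mathcal{S}_n$ is a contraction of the complete metric space $\overline{B_\rho(u^*)}\cap X_n$ into itself, and the Banach fixed point theorem yields a unique $u_n^*\in \overline{B_\rho(u^*)}\cap X_n$ with $\mathcal{S}_n[u_n^*]=u_n^*$, and in particular $\|u_n^*-u^*\|_\infty\leq \rho$.

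To obtain the strict inequality $\|u_n^*-u^*\|_\infty<\rho$ claimed in the statement (which also pins down uniqueness within the open ball $B_\rho(u^*)$), I would invoke the standard a~priori estimate for contractions, $\|u_n^*-u^*\|_\infty\leq \bigl(1-\tfrac{3}{4}\bigr)^{-1}\|\mathcal{S}_n[u^*]-u^*\|_\infty = 4\,\|\mathcal{S}_n[u^*]-u^*\|_\infty$. Setting $u=u^*$ in the expansion~\eqref{eq:Sexp}, every term containing a factor of $\Xi(W_n,W,u^*)$ applied to $u-u^*$ vanishes, leaving $\mathcal{S}_n[u^*]-u^*$ expressed purely in terms of $DF(u^*)^{-1}F_n(u^*)$ and its image under the (uniformly bounded) operator $\Xi(W_n,W,u^*)/Q(\cdot)$; Lemma~\ref{lem:Fnsmall} then gives $\|\mathcal{S}_n[u^*]-u^*\|_\infty\to 0$ as $n\to\infty$, so after enlarging $N(\rho)$ if necessary we have $\|u_n^*-u^*\|_\infty<\rho$ for all $n\geq N(\rho)$. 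I do not anticipate a genuine obstacle here: the only analytic content — the contraction estimate with constant strictly below one — is supplied by Lemma~\ref{lem:Scontraction}, and the remaining steps are the routine verification that the ball is a complete metric space and the cosmetic passage to a strict bound.
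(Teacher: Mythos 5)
Your proposal is correct and follows essentially the same route as the paper, which simply invokes Lemma~\ref{lem:Scontraction} together with the contraction mapping theorem and states the corollary without further argument. Your additional care about completeness of the closed ball and the a priori estimate $\|u_n^*-u^*\|_\infty\leq 4\|\mathcal{S}_n[u^*]-u^*\|_\infty$ (combined with Lemma~\ref{lem:Fnsmall}) to upgrade $\leq\rho$ to $<\rho$ fills in details the paper leaves implicit, and is sound.
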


\subsection{Finite-dimensional solution }\label{sec:proofwrapup}

We have now established, for all $n$ sufficiently large, that there exists a $u_n^*\in X_n$ which is a fixed point of $\mathcal{S}_n[u]$. From the contraction mapping theorem, this fixed point is unique in a small ball centered at $u^*$ with respect to the sup norm.  We will now show that the existence of this fixed point implies the existence of a steady-state for the finite-dimensional problem $G_n(\mathbf{u},A)$.  This involves two steps: first showing that the fixed point of $\mathcal{S}_n$ is also a fixed point of $\mathcal{T}_n$ and then verifying that this function is constant on each sub-interval in the partition of $[0,1]$.

\begin{lem}\label{lem:unfixedpointT}
    There exists a $\rho_T>0$ such that for any $\rho\in (0,\rho_T)$ there exists an $N(\rho) \geq 1$ such that for any $n\geq N(\rho)$ there exists a unique function $u_n^*\in X_n$ satisfying $\mathcal{T}_n[u_n^*]=u_n^*$ and
    \[ 
        \sup_{x\in [0,1]} | u_n^*(x)-u^*(x)|< \rho.  
    \]
    As a consequence, $u_n^*$ is such that $F_n(u_n^*)=0$.
\end{lem}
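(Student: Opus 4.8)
The plan is to exploit the elementary bookkeeping fact that a fixed point of $\mathcal{T}_n$ is automatically a fixed point of $\mathcal{S}_n=\mathcal{T}_n\circ\mathcal{T}_n$, while conversely the $\mathcal{T}_n$-image of an $\mathcal{S}_n$-fixed point is again an $\mathcal{S}_n$-fixed point. Combined with the uniqueness of the $\mathcal{S}_n$-fixed point from Corollary~\ref{Banach FPT p} and the mapping property of $\mathcal{T}_n$ from Lemma~\ref{lem:T maps Bp to Bnp}, this forces the $\mathcal{S}_n$-fixed point to be a $\mathcal{T}_n$-fixed point. The consequence $F_n(u_n^*)=0$ is then immediate from the definition of $\mathcal{T}_n$ and the invertibility of $DF(u^*)$.

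Concretely, I would first choose $\rho_T>0$ small enough that $\rho_T\leq \rho^*$ (so Lemma~\ref{lem:T maps Bp to Bnp} applies) and, crucially, $2\eta\rho_T<\rho_S$ so that Lemma~\ref{lem:Scontraction} and Corollary~\ref{Banach FPT p} may be invoked not only on $B_\rho(u^*)\cap X_n$ but also on the enlarged ball $B_{2\eta\rho}(u^*)\cap X_n$. For $\rho\in(0,\rho_T)$ I would take $N(\rho)$ to be the maximum of the thresholds furnished by Lemma~\ref{lem:T maps Bp to Bnp} with radius $\rho$, by Corollary~\ref{Banach FPT p} with radius $\rho$, and by Corollary~\ref{Banach FPT p} with radius $2\eta\rho$. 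Then for every $n\geq N(\rho)$ there is a unique $u_n^*$ with $\mathcal{S}_n[u_n^*]=u_n^*$ in $B_\rho(u^*)\cap X_n$; moreover the $\mathcal{S}_n$-fixed point in the larger ball $B_{2\eta\rho}(u^*)\cap X_n$ is also unique, and since $\eta\geq 1$ gives $B_\rho(u^*)\subseteq B_{2\eta\rho}(u^*)$, it must coincide with $u_n^*$.

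Next I would verify $\mathcal{T}_n[u_n^*]=u_n^*$. Set $v:=\mathcal{T}_n[u_n^*]$. By Lemma~\ref{lem:T maps Bp to Bnp}, since $u_n^*\in B_\rho(u^*)\cap X_n$ we have $v\in B_{2\eta\rho}(u^*)\cap X_n$. Using associativity of composition, $\mathcal{S}_n[v]=\mathcal{T}_n\circ\mathcal{T}_n\circ\mathcal{T}_n[u_n^*]=\mathcal{T}_n[\mathcal{S}_n[u_n^*]]=\mathcal{T}_n[u_n^*]=v$, so $v$ is an $\mathcal{S}_n$-fixed point in $B_{2\eta\rho}(u^*)\cap X_n$; by the uniqueness noted above, $v=u_n^*$, i.e. $\mathcal{T}_n[u_n^*]=u_n^*$. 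Uniqueness of the $\mathcal{T}_n$-fixed point within $B_\rho(u^*)\cap X_n$ follows because any such fixed point also solves $\mathcal{S}_n[u]=u$ there, and there is only one such point. Finally, from $u_n^*=\mathcal{T}_n[u_n^*]=u_n^*-DF(u^*)^{-1}F_n(u_n^*)$ we get $DF(u^*)^{-1}F_n(u_n^*)=0$, and since $DF(u^*)^{-1}$ is bounded and invertible on $X_n$ by Corollary~\ref{cor:DFinvertible}, this yields $F_n(u_n^*)=0$.

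The only genuinely delicate point — and the step I would be most careful about — is making the uniqueness of the $\mathcal{S}_n$-fixed point available on the enlarged ball $B_{2\eta\rho}(u^*)$: because $\mathcal{T}_n$ maps $B_\rho(u^*)$ only into $B_{2\eta\rho}(u^*)$ rather than back into $B_\rho(u^*)$, one cannot argue purely inside the small ball, and it is precisely to close this gap that the constraint $2\eta\rho_T<\rho_S$ is imposed when selecting $\rho_T$ (which is legitimate since $\rho$ ranges over an interval and $\rho_S,\eta$ are fixed). Everything else is routine once the radii and the thresholds $N(\rho)$ are coordinated as above.
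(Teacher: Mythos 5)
Your proposal is correct and follows essentially the same route as the paper: both arguments hinge on the observation that $v=\mathcal{T}_n[u_n^*]$ is again a fixed point of $\mathcal{S}_n$ lying in the enlarged ball $B_{2\eta\rho}(u^*)$, and both close the argument by choosing $\rho_T$ with $2\eta\rho_T<\rho_S$ so that the uniqueness of the $\mathcal{S}_n$-fixed point on that larger ball forces $v=u_n^*$. The only cosmetic difference is that the paper phrases this as a proof by contradiction while you argue directly.
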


\begin{proof}
Let $\rho_S > 0$ be the bound on the ball radius guaranteed by Lemma~\ref{lem:Scontraction} and let $\rho_T>0$ be taken small enough to satisfy $2\eta\rho_T<\rho_S$, where we recall $\eta$ is defined in \eqref{eq:eta}. From Corollary~\ref{Banach FPT p} we have a unique function $u_n^*\in B_{\rho_T}(u^*)$ such that $\mathcal{S}_n[u_n^*] = \mathcal{T}_n[\mathcal{T}_n[u_n^*]] = u_n^*$ for all $n$ sufficiently large. There are now two cases: (1) $u_n^*$ is a fixed point of $\mathcal{T}_n$, yielding the result, or (2) $u_n^*$ is not a fixed point of $\mathcal{T}_n$. 

We proceed by assuming case (2) for the purpose of contradiction. Set $\mathcal{T}_n[u_n^*] = v_n^*$ with the assumption that $v_n^* \neq u_n^*$. Then, Lemma~\ref{lem:T maps Bp to Bnp} gives that $v_n^* \in B_{2\eta\rho_T}(u^*)$ because $u_n^* \in B_{\rho_T}(u^*)$. Using the fact that $u_n^*$ is a fixed point of $\mathcal{S}_n = \mathcal{T}_n\circ \mathcal{T}_n$, we have that $\mathcal{T}_n[v_n^*] = u_n^*$ as well. Hence,
\[
    \mathcal{S}_n[v_n^*] = \mathcal{T}_n[\mathcal{T}_n[v_n^*]] = \mathcal{T}_n[u_n^*] = v_n^*, 
\]
showing that $v_n^*$ is also a fixed point of $\mathcal{S}_n$. However,  $\mathcal{S}_n$ is a contraction on $B_{2\eta\rho_T}(u^*)$ for all $n$ sufficiently large; see again Lemma~\ref{lem:Scontraction}. Therefore, the contraction mapping theorem guarantees the uniqueness of the fixed point of $\mathcal{S}_n$ on the larger ball $B_{2\eta\rho_T}(u^*)$, meaning that $u_n^*=v_n^*$. Thus, only possibility (1) above remains, meaning that $u_n^*$ is a fixed point of $\mathcal{T}_n$. 

Lastly, from the definition of $\mathcal{T}_n$, a fixed point of $\mathcal{T}_n$ is a solution of $DF(u^*)^{-1}F_n(u_n^*)=0$. Since Corollary~\ref{cor:DFinvertible} proved that $DF(u^*)$ is invertible on $X_n$ for all $n\geq 1$, this implies $F_n(u_n^*)=0$.  
\end{proof}

At this point, we only know that the solution $u_n^*\in X_n$ is continuous on each sub-interval $[(i-1)/n,i/n)$ of $[0,1]$. As a final piece of the proof of Theorem~\ref{thm:existence} we will show that this solution is constant on each sub-interval and so the solution $F_n(u_n^*)=0$ corresponds to a solution  of the  original finite-dimensional problem $G_n(\mathbf{u}_n^*,A_n)=0$ with $\mathbf{u}_n^*\in\mathbb{R}^n$ the vector of the values on the steps of $u_n^*$. 

\begin{lem}\label{lem:PiecewiseConstant} 
    The fixed point $u_n^*\in X_n$ of $\mathcal{T}_n$, guaranteed by Lemma~\ref{lem:unfixedpointT} for sufficiently large $n$, is piecewise constant on each interval $\left[\frac{i-1}{n},\frac{i}{n}\right)$.
\end{lem}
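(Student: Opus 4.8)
The plan is to use the step structure of $W_n$ to reduce the equation $F_n(u_n^*)=0$ to a scalar equation on each subinterval, and then to extract piecewise-constancy from the sign of $Q_n$. Fix $i\in\{1,\dots,n\}$ and set $I_i=[(i-1)/n,i/n)$. For $x\in I_i$ the slice $y\mapsto W_n(x,y)$ does not depend on the particular point $x\in I_i$; denote its common value by $W_n^{(i)}(y)$. Introducing the single-variable function
\[ g_i(s):=f(s)+\int_0^1 W_n^{(i)}(y)\,D(s,u_n^*(y))\,\mathrm{d}y, \]
the identity $F_n(u_n^*)\equiv 0$ restricted to $I_i$ becomes $g_i(u_n^*(x))=0$ for every $x\in I_i$. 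By Hypothesis~\ref{hyp:Smooth}, $g_i\in C^1(\mathbb{R})$ with $g_i'(s)=f'(s)+\int_0^1 W_n^{(i)}(y)\,D_1(s,u_n^*(y))\,\mathrm{d}y$.

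The key observation is that the derivative of $g_i$ along the solution is exactly $-Q_n$: for $x\in I_i$ one has $W_n^{(i)}(y)=W_n(x,y)$, so $g_i'(u_n^*(x))=f'(u_n^*(x))+\int_0^1 W_n(x,y)D_1(u_n^*(x),u_n^*(y))\,\mathrm{d}y=-Q_n(u_n^*(x))$. Shrinking $\rho_T$ and enlarging $N(\rho)$ if necessary, the second part of Lemma~\ref{lem:Qsmall} applies to $u_n^*\in B_{\rho_T}(u^*)$ and gives $Q_n(u_n^*(x))>0$ for all $x\in[0,1]$; consequently $g_i'$ is strictly negative at every value taken by $u_n^*$ on $I_i$.

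With this in hand I would argue by contradiction. If $u_n^*$ were non-constant on $I_i$ there would be $a<b$ in $I_i$ with, after relabelling, $u_n^*(a)<u_n^*(b)$. Continuity of $u_n^*$ on the closed interval $[a,b]\subset I_i$ and the intermediate value theorem then force $u_n^*$ to attain every value in $[u_n^*(a),u_n^*(b)]$, so $g_i\equiv 0$ on this nondegenerate interval and hence $g_i'\equiv 0$ on its interior. But any $s$ in that interior equals $u_n^*(x^*)$ for some $x^*\in I_i$, which would give $g_i'(s)=-Q_n(u_n^*(x^*))<0$ — a contradiction. Therefore $u_n^*$ is constant on each $I_i$, so that the vector $\mathbf{u}_n^*\in\mathbb{R}^n$ of interval-values is well defined and solves $G_n(\mathbf{u}_n^*,A_n)=0$, completing the proof of Theorem~\ref{thm:existence}.

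The only non-routine ingredient is the identity $g_i'(u_n^*(x))=-Q_n(u_n^*(x))$ together with the positivity of $Q_n$ supplied by Lemma~\ref{lem:Qsmall}; everything else is a short application of the intermediate value theorem and the elementary fact that a $C^1$ function cannot vanish on a nondegenerate interval on which its derivative is bounded away from zero. I do not anticipate any real obstacle beyond bookkeeping — one must simply make sure $\rho_T$ is chosen small enough (and $N$ large enough) that the positivity of $Q_n$ along $u_n^*$ holds, which is already built into the statement of Lemma~\ref{lem:Qsmall}.
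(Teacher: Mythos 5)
Your proposal is correct and follows essentially the same route as the paper: both define the scalar function $\zeta\mapsto f(\zeta)+\int_0^1 W_n(a,y)D(\zeta,u_n^*(y))\,\mathrm{d}y$ (your $g_i$, the paper's $\Pi$), use continuity of $u_n^*$ on the subinterval to show it vanishes on a nondegenerate interval of values, and derive a contradiction from the fact that its derivative there equals $-Q_n(u_n^*(\cdot))$, which Lemma~\ref{lem:Qsmall} makes nonzero for large $n$. The only cosmetic difference is that you obtain the interval of values via the intermediate value theorem, whereas the paper picks a point that is neither a local maximum nor minimum; the substance is identical.
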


\begin{proof}
We will argue by contradiction. Fix $n$ sufficiently large so that $u_n^*\in X_n$ is the steady-state solution guaranteed by Lemma~\ref{lem:unfixedpointT}. Suppose that for some $n$, there exists a sub-interval $J_* := \left[\frac{i-1}{n},\frac{i}{n}\right)$ for which $u_n^*$ is not constant on it.  

Since $u_n^*$ solves $F_n(u_n) = 0$, it follows that for each $a\in J_*$ we have 
\[ 
    0=f(u_n^*(a))+\int_0^1 W_n(a,y) D(u_n^*(a),u_n^*(y))\mathrm{d} y. 
\]
Now, recall from the definition of $X_n$ that $u_n^*$ is continuous on $J_*$ and that $W_n(a,y)=W_n(b,y)$ for all $a,b\in J_*$. Since we have assumed that $u_n^*$ is non-constant on $J_*$, we may further suppose that $u_n^*(a)$ is neither a local maximum nor minimum of $u_n^*$. Therefore, there exists a $\delta>0$ such that 
\[ 
    0=f(\zeta)+\int_0^1 W_n(a,y) D(\zeta,u_n^*(y))\mathrm{d} y
\]
for any $\zeta\in [u_n^*(a)-\delta,u_n^*(a)+\delta]$.

Let us now define the function 
\[ 
    \Pi(\zeta)=f(\zeta)+\int_0^1 W_n(a,y) D(\zeta,u_n^*(y))\mathrm{d} y,
\]
which from above satisfies $\Pi(\zeta)=0$ for all $\zeta \in [u_n^*(x_1)-\delta,u_n^*(x_1)+\delta]$.  In turn, this implies that $\frac{\mathrm{d} \Pi}{\mathrm{d} \zeta}=0$ on this interval, where differentiability of $\Pi$ with respect to $\zeta$ is a consequence of Hypothesis~\ref{hyp:Smooth}. The derivative $\frac{\mathrm{d} \Pi}{\mathrm{d} \zeta}$ is further found to be
\[ 
    \frac{d \Pi}{d\zeta} =f'(\zeta)+\int_0^1 W_n(x_1,y) D_1(\zeta,u_n^*(y))\mathrm{d} y.
\]
Precisely, evaluating $\frac{\mathrm{d} \Pi}{\mathrm{d} \zeta}$ at $u_n^*(a)$ is exactly $Q_n(u_n^*(a))$, which is proven, for $n$ sufficiently large to be non-zero in Lemma~\ref{lem:Qsmall}. Thus, we have a contradiction, which proves the claim.
\end{proof}

We now summarize our contributions so far with the following proof of Theorem~\ref{thm:existence}.

\begin{proof}[Proof of Theorem~\ref{thm:existence}]
Recall the definitions of $\rho_S$ from Lemma~\ref{lem:Scontraction} and recall that the constant $\rho_T$ is selected such that $2\eta\rho_T<\rho_S$.  
For any $\rho\in (0,\rho_T)$, there exists an $N(\rho) \geq 1$ such that for any $n\geq N(\rho)$ the contraction mapping result in Lemma~\ref{lem:Scontraction} gives the existence of a fixed point of $\mathcal{S}_n$, as summarized in Corollary~\ref{Banach FPT p}.  Denoting these fixed points as $u_n^*(x) \in X_n$, we have that they are locally unique functions that satisfy $F_n(u_n^*)=0$, again for any $n\geq N(\rho)$ (see Lemma~\ref{lem:unfixedpointT}).  These functions are shown to be piece-wise constant (perhaps by restricting to $N(\rho)$ larger) in Lemma~\ref{lem:PiecewiseConstant} and therefore correspond to vector solutions of the finite-dimensional problem $G_n(\unstar,A_n)=0$. This concludes the proof.
\end{proof}

\section{Proof of Theorem~\ref{thm:stability}}\label{sec:proof2}

In this section we prove Theorem~\ref{thm:stability}. Throughout we will denote the steady-state $\unstar \in \mathbb{R}^n$ to be that which is guaranteed by Theorem~\ref{thm:existence}. Then, our approach in this section will be to first study the stability of $u_n^*\in X_n$, the step function version of $\unstar$ on $[0,1]$, as a solution of the non-local problem $F_n(u_n^*)=0$ and leverage this stability result to the discrete finite-dimensional setting.  Recall that our standing assumption in this section is that $DF(u^*)$ as an operator on $C[0,1]$ has spectrum which is contained entirely in the left half of the complex plane and bounded away from the imaginary axis.  

The linearization of $F_n(u)$ at the steady-state $u_n^*$ acts on $v \in C[0,1]$ according to 
\begin{equation}
    DF_n(u_n^*)v=-Q_n(u_n^*(x))v+\int_0^1 W_n(x,y)D_2(u_n^*(x),u_n^*(y))v(y)\mathrm{d}y, 
\end{equation}
where we recall from the previous section that 
\begin{equation} 
    Q_n(u_n^*(x))=-f'(u_n^*(x))-\int_0^1 W_n(x,y)D_1(u_n^*(x),u_n^*(y))\mathrm{d}y. 
\end{equation}
Our goal is to use spectral convergence results to show that the spectrum of $DF_n(u_n^*)$ is close to that of $DF(u^*)$ for $n$ sufficiently large.  The space $X_n$ turns out to be insufficient for this purpose. This, once again, stems from the fact the $L^\infty \to L^\infty$ operator norm of $\Xi(W_n,W,u^*)$ does not tend to zero as $n \to \infty$. Instead, we will consider the spectrum of $DF_n(u_n^*)$ as an operator on $L^2$ where we have estimates such as \eqref{eq:TWcutnormbounds} available. To carry out this argument we will first show that the spectrum of $DF(u^*)$ is the same on both $C[0,1]$ and $L^2$; see also \cite[Lemma 3.4]{delmas22} for a similar finding.  Operator norm convergence of $DF_n(u_n^*)$ to $DF(u^*)$ is then obtained in $L^2$. This ultimately allows us to conclude spectral stability of $DF_n(u_n^*)$ based upon stability of $DF(u^*)$, finally also obtaining stability of the spectrum of the discrete operator $DG_n(\unstar)$.   


\begin{lem} \label{lem:DF on C = DF on L2}
    The spectrum of the operator $DF(u^*)$ posed on $L^2$ is equivalent to the spectrum on $C[0,1]$, i.e.
    \[
    \spec(DF(u^*))|_{C[0,1]} = \spec(DF(u^*))|_{L^2}.
    \]
\end{lem}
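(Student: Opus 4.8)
The plan is to work from the decomposition
\[
    DF(u^*)v = -Q(x)v + \mathcal{K}v, \qquad \mathcal{K}v := \int_0^1 W(x,y)D_2(u^*(x),u^*(y))v(y)\,\mathrm{d}y ,
\]
which is meaningful on both $C[0,1]$ and $L^2$: multiplication by the bounded continuous function $-Q$ (Corollary~\ref{cor:Qcont}) is bounded on either space, and $\mathcal{K}$ has kernel $W(x,y)D_2(u^*(x),u^*(y)) \in L^\infty([0,1]^2)\subset L^2([0,1]^2)$, so it is compact on $C[0,1]$ (shown in the proof of Lemma~\ref{lem:fredholm}) and Hilbert--Schmidt, hence compact, on $L^2$. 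Thus on either space $DF(u^*)-\lambda I$ is a compact perturbation of multiplication by $-Q-\lambda$ and, by \cite[Theorem~IV.5.26]{kato}, is Fredholm of index zero exactly when that multiplication operator is. The argument then splits into matching the essential spectra and matching the point spectra.

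For the essential spectra, continuity of $Q$ on the compact set $[0,1]$ forces its essential range to equal $\mathrm{Rng}(-Q)$. Hence, if $\lambda\notin \mathrm{Rng}(-Q)$ then $-Q-\lambda$ is bounded away from zero and multiplication by it is invertible (so Fredholm of index zero) on both $C[0,1]$ and $L^2$; and if $\lambda\in \mathrm{Rng}(-Q)$ it is not Fredholm on either space --- on $C[0,1]$ by the infinite-dimensional co-range argument already given in the proof of Lemma~\ref{lem:fredholm}, and on $L^2$ by exhibiting an orthonormal singular sequence of $L^2$-normalized indicators concentrating near a point $x_0$ with $-Q(x_0)=\lambda$. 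This gives $\sigma_{ess}(DF(u^*))|_{C[0,1]} = \mathrm{Rng}(-Q) = \sigma_{ess}(DF(u^*))|_{L^2}$. For the point spectra, it remains to compare the parts of the two spectra lying outside $\mathrm{Rng}(-Q)$; for such $\lambda$ the operator $DF(u^*)-\lambda I$ is Fredholm of index zero on both spaces, so $\lambda$ is in the spectrum iff $\ker(DF(u^*)-\lambda I)$ is nontrivial, and it suffices to show this kernel is the same subset of $L^2$ computed in either space. The inclusion of the $C[0,1]$-kernel into the $L^2$-kernel is immediate; for the converse, if $\psi\in L^2$ satisfies $(DF(u^*)-\lambda I)\psi=0$, dividing by the continuous, non-vanishing $Q(\cdot)+\lambda$ gives
\[
    \psi(x) = \frac{1}{Q(x)+\lambda}\int_0^1 W(x,y)D_2(u^*(x),u^*(y))\psi(y)\,\mathrm{d}y ,
\]
and since $\psi\in L^2([0,1])\subset L^1([0,1])$ with $W$ and $D_2(u^*(\cdot),u^*(\cdot))$ bounded, the right-hand side is bounded in $x$, so $\psi\in L^\infty$; the continuity argument of Corollary~\ref{cor:Qcont} (Hypothesis~\ref{hyp:Graphon}(2) plus uniform continuity of $D_2(u^*(\cdot),u^*(\cdot))$, now applicable since $\psi$ is bounded) then shows the integral is continuous in $x$, whence $\psi\in C[0,1]$. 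Equality of the kernels follows, and with it equality of the point spectra, hence of the full spectra.

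The step I expect to be the main obstacle is this last bootstrap: an element of the $L^2$-kernel need not a priori be bounded, so the continuity estimate of Corollary~\ref{cor:Qcont} does not apply directly, and one must first gain boundedness from the integral representation using only $\psi\in L^1$ and the boundedness of $W$, and only then upgrade to continuity. A secondary technical point to check carefully is the non-Fredholmness of multiplication by $-Q-\lambda$ on $L^2$ when $\lambda\in\mathrm{Rng}(-Q)$, where the continuity of $Q$ --- which makes the essential range coincide with the range --- is precisely what is used. Everything else (compactness of $\mathcal{K}$ on $L^2$, the Fredholm bookkeeping, and the trivial inclusion $C[0,1]\hookrightarrow L^2$) is routine.
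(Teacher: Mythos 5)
Your proposal is correct and follows essentially the same route as the paper's proof: the same multiplication-plus-compact decomposition with the Fredholm bookkeeping via \cite[Theorem~IV.5.26]{kato}, the same identification of the essential spectrum with $\mathrm{Rng}(-Q)$ on both spaces (your normalized-indicator singular sequence is the same computation the paper packages via the open mapping and closed range theorems), and the same $L^2\to L^\infty\to C[0,1]$ bootstrap for kernel elements using Corollary~\ref{cor:Qcont}. The only cosmetic difference is that the paper also remarks that generalized eigenfunctions are continuous, which is not needed for equality of the spectra as sets.
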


\begin{proof}

We begin with the essential spectrum.  Recall that in Lemma~\ref{lem:fredholm} 
we showed that $\lambda \in \spec_{ess}(DF(u^*))|_{C[0,1]}$ if and only if $\lambda \in \mathrm{Rng}(-Q(x))$. This characterization of the essential spectrum also carries over to the space $L^2$, which for completeness we now demonstrate.

Suppose that $\lambda+Q(x)\neq 0$, meaning $\lambda \notin \mathrm{Rng}(-Q(x))$. Since $Q(x)$ is continuous on $[0,1]$, it follows that the function $\frac{1}{\lambda+Q(x)}$ is bounded. Thus, the multiplication operator $v\to -(\lambda+Q(x)) v$ is invertible on $L^2$ and therefore Fredholm with index zero.  The remaining integral portion of the operator $DF(u^*)$ is a Hilbert--Schmidt integral operator and therefore compact. Since the Fredholm index is preserved under compact perturbations, it follows that if $\lambda \notin \mathrm{Rng}(-Q(x))$ then $DF(u^*) - \lambda$ is Fredholm with index zero and $\lambda$ is not an element of the essential spectrum.  

Suppose now that there exists a $c\in [0,1]$ such that $-(\lambda +Q(c))=0$. We will now verify that the multiplication operator $Q_\lambda v = -(\lambda + Q(\cdot))v$ on $L^2$ is not Fredholm for this choice of $\lambda$.  For any $\varepsilon>0$, let $I_\varepsilon= (c-\varepsilon,c+\varepsilon) \cap [0,1]$ and let $\chi_\varepsilon:[0,1] \to \mathbb{R}$ be the indicator function associated to this interval. Then, since $Q(x)$ is continuous the mean value theorem for integrals gives that for any $\varepsilon$ sufficiently small it holds that 
\begin{equation} 
    \frac{\|Q_\lambda\chi_\varepsilon\|_2^2}{\|\chi_\varepsilon\|_2^2} = \frac{1}{2\varepsilon}\int_{c-\varepsilon}^{c+\varepsilon} \left(Q(y)+\lambda\right)^2\mathrm{d}y = \left(Q(\theta(\varepsilon))+\lambda\right)^2,
\end{equation}
for some $\theta \in (c-\varepsilon,c+\varepsilon)$. The right hand side tends to zero as $\varepsilon\to 0$ showing that $Q_\lambda$ is not bounded from below. Therefore, the open mapping theorem gives that $Q_\lambda$ is not onto $L^2$. If the zero set of $Q_\lambda$ has zero measure then the nullspace of the multiplication operator $Q_\lambda$ is trivial, and so by the closed range theorem \cite[Theorem VII.5.1]{yosida}, it follows that the range of $Q_\lambda$ is not closed in $L^2$.  Alternatively, if there is an open interval over which $Q_\lambda(x)\equiv 0$ then the kernel of $Q_\lambda$ does not have finite dimension. This implies that $Q_\lambda$, and by extension $DF(u^*)-\lambda I$, are not Fredholm and thus $\lambda \in \sigma_{ess}(DF(u^*))|_{L^2}$ for any $\lambda\in \mathrm{Rng}(-Q(\cdot))$. Thus, $\sigma_{ess}(DF(u^*))|_{L^2} = \sigma_{ess}(DF(u^*))|_{C[0,1]}$.  

We now turn to the point spectrum of $DF(u^*)$ on $L^2$.  We will show that the point spectrum of $DF(u^*)$ is equivalent on $L^2$ and $C[0,1]$ by verifying that all eigenfunctions and generalized eigenfunctions in $L^2$ are also continuous.  Let $\lambda\in \spec_{pt}(DF(u^*))|_{L^2}$, which from above means that $\lambda \notin \mathrm{Rng}(-Q(x)).$ Then there exists an associated eigenfunction $v\in L^2$ such that 
\begin{equation}
    (-Q(x) - \lambda)v(x) + \int_0^1W(x,y)D_2(u^*(x),u^*(y))v(y)\mathrm{d}y = 0.
\end{equation}
Rearranging this expression means that $v$ satisfies 
\begin{equation} \label{eqn:v rearranged}
    v(x) = \frac{1}{Q(x) + \lambda}\int_0^1W(x,y)D_2(u^*(x),u^*(y))v(y)\mathrm{d}y.
\end{equation}
Since $\lambda \notin \mathrm{Rng}(-Q(x))$ and both $W$ and $D_2$ are bounded, we take the supremum of the right-hand side of \eqref{eqn:v rearranged} to bound $v$ pointwise by
\begin{align*}
    |v(x)| &\leq \sup_{x\in [0,1]} \left| \frac{1}{Q(x) + \lambda}\int_0^1W(x,y)D_2(u^*(x),u^*(y))v(y)\mathrm{d}y \right| \\
    &\leq \sup_{x\in [0,1]} \left| \frac{1}{Q(x) + \lambda}\right| \int_0^1 \left| W(x,y)D_2(u^*(x),u^*(y))v(y) \right| \mathrm{d}y \\
    &\leq \eta \|v\|_1\\
    &\leq \eta \|v\|_2
\end{align*}
for some $\eta>0$. In the above we have used the fact that functions in $L^2$ also belong to $L^1$ and satisfy the inequality $\|v\|_1 \leq \|v\|_2$, coming from the fact that $[0,1]$ has finite (Lesbegue) measure. Thus, we see that the eigenfunction $v\in L^2$ satisfies $\|v\|_\infty \leq \eta \|v\|_2$, showing that $v \in L^\infty$. We can further apply Corollary~\ref{cor:Qcont} to see that both $\frac{1}{Q(x)+\lambda}$ and $\int_0^1W(x,y)D_2(u^*(x),u^*(y))v(y)\mathrm{d}y$ are continuous functions of $x$. Given the representation for $v$ in \eqref{eqn:v rearranged}, we see that $v$ is a product of continuous functions and thus continuous. A similar argument works for generalized eigenfunctions and therefore $\spec_{pt}(DF(u^*))|_{L^2} \subseteq \spec_{pt}(DF(u^*))|_{C[0,1]}$.   

Since elements of $C[0,1]$ are bounded, they also belong to $L^2$ and so trivially we have that $\spec_{pt}(DF(u^*))|_{C[0,1]} \subseteq \spec_{pt}(DF(u^*))|_{L^2}$, which together with the above implies that $\spec_{pt}(DF(u^*))|_{L^2} = \spec_{pt}(DF(u^*))|_{C[0,1]}$. Since the spectrum is decomposed into the point and essential spectrum, we have shown that $\spec(DF(u^*))|_{L^2} = \spec(DF(u^*))|_{C[0,1]}$, concluding the proof.
\end{proof}


\begin{lem} \label{lem: 2,2 operator convergence}
    For any $\varepsilon>0$ there exists an $N\in \N$ such that for all $n\geq N$ we have
    \[
        \|DF_n(u^*_n) - DF(u^*)\|_{2\to 2} <\varepsilon.
    \]
\end{lem}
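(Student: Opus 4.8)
The plan is to split the difference into its three natural pieces and bound each in the $L^2\to L^2$ operator norm, where $u_n^*\in X_n$ denotes the piecewise-constant steady state of $F_n$ furnished by Theorem~\ref{thm:existence} (which lies in $B_1(u^*)\subset X_n$ once $n$ is large). Adding and subtracting $W_n(x,y)D_2(u^*(x),u^*(y))$ inside the integral gives
\begin{align*}
    [DF_n(u_n^*)-DF(u^*)]v &= -\big[Q_n(u_n^*(x))-Q(x)\big]v \\
    &\quad+\int_0^1 W_n(x,y)\big[D_2(u_n^*(x),u_n^*(y))-D_2(u^*(x),u^*(y))\big]v(y)\,\mathrm{d}y \\
    &\quad+\Xi(W_n,W,u^*)v .
\end{align*}
I would bound the first term by $\|Q_n(u_n^*)-Q\|_\infty$, since multiplication by a bounded function has $L^2\to L^2$ norm equal to its sup norm; Lemma~\ref{lem:Qsmall} applied with $u=u_n^*$ then gives $\|Q_n(u_n^*)-Q\|_\infty<\varepsilon'+C\|d_W-d_{W_n}\|_\infty+2L\|u_n^*-u^*\|_\infty$ for a $C$ independent of $n$. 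The second term has a kernel bounded pointwise by $L\|u_n^*-u^*\|_\infty$ (using $0\le W_n\le 1$ and the Lipschitz bound on $D_2$ with respect to the sup norm on $\mathbb{R}^2$), so its $L^2\to L^2$ norm is at most its Hilbert--Schmidt norm, hence at most $L\|u_n^*-u^*\|_\infty$. Both pieces tend to zero: $\|d_W-d_{W_n}\|_\infty\to 0$ by Hypothesis~\ref{hyp:Graphon}(1), and by Theorem~\ref{thm:existence} we may first fix $\rho$ as small as we like and then take $n$ large enough that $\|u_n^*-u^*\|_\infty<\rho$.

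The crux is the third term $\Xi(W_n,W,u^*)$, with kernel $[W_n(x,y)-W(x,y)]D_2(u^*(x),u^*(y))$ --- precisely the term that, as emphasized throughout Section~\ref{sec:proof1}, cannot be controlled in the $L^\infty\to L^\infty$ norm, which is the whole reason for passing to $L^2$. Here I would use \eqref{eq:TWcutnormbounds} with $p=q=2$, giving $\|T_{W_n-W}\|_{L^2\to L^2}\le \sqrt 2\,\|W_n-W\|_{\square,2}^{1/2}\le 2\sqrt 2\,\|W_n-W\|_\square^{1/2}$ (these bounds, stated for graphons, apply verbatim to the bounded symmetric kernel $W_n-W$, exactly as in Lemma~\ref{lem:W-Wnappliedtwice}); since $\|W_n-W\|_\square\to 0$ by Hypothesis~\ref{hyp:Graphon}(1), the bare operator $T_{W_n-W}$ converges to zero in $L^2\to L^2$ norm. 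To absorb the factor $D_2(u^*(\cdot),u^*(\cdot))$, continuous on the compact square $[0,1]^2$, I would approximate it uniformly to within $\delta$ by a step kernel $\Phi_M(x,y)=\sum_{i,j=1}^M d_{ij}\zeta_i(x)\zeta_j(y)$ with $|d_{ij}|\le L$, just as in the proof of Lemma~\ref{lem:W-Wnappliedtwice}. The part of $\Xi(W_n,W,u^*)$ coming from $\Phi_M$ equals $\sum_{i,j}d_{ij}\,M_{\zeta_i}T_{W_n-W}M_{\zeta_j}$, whose $L^2\to L^2$ norm is at most $M^2 L\,\|T_{W_n-W}\|_{L^2\to L^2}$ (multiplication by an indicator is a contraction on $L^2$), and this tends to $0$ as $n\to\infty$ for each fixed $M$; the remaining part has a kernel bounded by $\delta$ in sup norm, hence $L^2\to L^2$ norm at most $\delta$. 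Choosing $\delta$ small and then $n$ large makes $\|\Xi(W_n,W,u^*)\|_{2\to 2}$ arbitrarily small.

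Assembling the estimates: given $\varepsilon>0$, I would take $\varepsilon'=\varepsilon/4$ in Lemma~\ref{lem:Qsmall} (fixing $C$), then $\rho<\min\{\rho_*,1,\varepsilon/(12L)\}$ so that $3L\|u_n^*-u^*\|_\infty<\varepsilon/4$ once $n$ is large enough for Theorem~\ref{thm:existence}, then $\delta=\varepsilon/8$ in the step approximation (fixing $M$), and finally choose $N$ so that for all $n\ge N$ simultaneously $C\|d_W-d_{W_n}\|_\infty<\varepsilon/4$ and $2\sqrt 2\,M^2 L\,\|W_n-W\|_\square^{1/2}<\varepsilon/8$. Summing the four contributions yields $\|DF_n(u_n^*)-DF(u^*)\|_{2\to 2}<\varepsilon$. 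The main obstacle, and the only step beyond routine Lipschitz estimates and results already in hand, is the cut-norm control of $\Xi(W_n,W,u^*)$ in the $L^2\to L^2$ norm via \eqref{eq:TWcutnormbounds} combined with the step-kernel reduction.
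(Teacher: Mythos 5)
Your proposal is correct and follows essentially the same route as the paper's proof: Lipschitz estimates in the sup norm for the pieces involving $u_n^*-u^*$, Lemma~\ref{lem:WWnappliedtocontissmall} (which you access through Lemma~\ref{lem:Qsmall}) for the degree-controlled part, and, for the key term $\Xi(W_n,W,u^*)$, the $L^2\to L^2$ cut-norm bound $\|T_{W_n-W}\|_{2\to2}\le 2\sqrt{2}\|W_n-W\|_\square^{1/2}$ combined with a step-kernel approximation of $D_2(u^*(x),u^*(y))$, exactly as in the paper's $L^2$ adaptation of Lemma~\ref{lem:W-Wnappliedtwice}. The only difference is bookkeeping — you add and subtract the kernel $W_n(x,y)D_2(u^*(x),u^*(y))$ rather than inserting $DF_n(u^*)$ via the triangle inequality — which does not change the substance of the argument.
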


\begin{proof}
Using the triangle inequality we express this difference as
\begin{equation} \label{eqn: DFu - DFnUn Triangle Inequality}
    \|DF_n(u_n^*) - DF(u^*)\|_{2\to 2} \leq  \|DF_n(u_n^*) - DF_n(u^*)\|_{2\to 2} + \|DF_n(u^*) - DF(u^*)\|_{2 \to 2}.
\end{equation}
We will work to show that each term on the right hand side of \eqref{eqn: DFu - DFnUn Triangle Inequality} can be made small individually for large enough $n$, which will in turn prove the lemma.  

Letting $v \in L^2$, using the definition of $DF_n(u_n^*(x))$ and $DF_n(u^*(x))$ we get   
\begin{align*}
    [DF_n(u_n^*(x)) - DF_n(u^*(x))]v(x) &= [-f'(u_n^*(x)) + f'(u^*(x))]v(x)\\
    & \quad - v(x)\int_0^1W_n(x,y) [D_1(u^*_n(x),u^*_n(y)) - D_1(u^*(x),u^*(y))]\mathrm{d}y\\
    & \quad + \int_0^1W_n(x,y) [D_2(u^*_n(x),u^*_n(y)) - D_2(u^*(x),u^*(y))]v(y)\mathrm{d}y.
\end{align*}
Now, recall that Lemma~\ref{lem:unfixedpointT} gives the existence of a $\rho_T > 0$ so that for any $\rho\in (0,\rho_T)$ there exists a $N(\rho) \geq 1$ such that $\|u_n^*-u^*\|_\infty<\rho$ for all $n\geq N(\rho)$. Furthermore, Hypothesis~\ref{hyp:Smooth} imply that $f'$, $D_1,$ and $D_2$ are all Lipschitz continuous.  For simplicity of notation, we assume that all Lipschitz constants are simply $L > 0$. Thus, for any $n \geq N(\rho)$ with $\rho < \rho_T$, the following bounds are immediate:
\begin{align*}
    \|[-f'(u_n^*) + f'(u^*)]v\|_2 &\leq L\rho \|v\|_2 \\
    \left\| v(\cdot) \int_0^1W_n(\cdot,y) [D_1(u^*_n(\cdot),u^*_n(y)) - D_1(u^*(\cdot),u^*(y))]\mathrm{d}y \right\|_2 & \leq L \rho \|v\|_2
\end{align*}
and 
\begin{equation}
    \begin{split}
    \left\| \int_0^1W_n(\cdot,y) [D_2(u^*_n(\cdot),u^*_n(y)) - D_2(u^*(\cdot),u^*(y))]v(y)\mathrm{d}y \right\|_2 &\leq L \rho \left\| \int_0^1 v(y) \mathrm{d}y \right\|_2\\
    &= L \rho \left| \int_0^1 v(y) \mathrm{d}y \right|\\
    &\leq L \rho \left\| v \right\|_1\\
    &\leq L \rho \left\| v \right\|_2,
    \end{split}
\end{equation}
where, as in the proof of the previous lemma, we have used the fact that $\|v\|_1 \leq \|v\|_2$ since $[0,1]$ is a space of finite measure.

We now consider the second term of inequality \eqref{eqn: DFu - DFnUn Triangle Inequality}. Letting $v\in L^2$, we have
\begin{align} \label{eqn: (DFn - DF)v Stability}
    [DF_n(u^*(x)) - DF(u^*(x))]v(x) &= - v(x)\int_0^1[W_n(x,y) - W(x,y)] D_1(u^*(x),u^*(y))\mathrm{d}y \nonumber \\ 
    & \quad + \int_0^1 [W_n(x,y) - W(x,y)] D_2(u^*(x),u^*(y))v(y)\mathrm{d}y.
\end{align}
Observe that $D_1(u^*(x),u^*(y)) \in C([0,1]\times [0,1])$ and directly apply Lemma~\ref{lem:WWnappliedtocontissmall}. So, for any $\varepsilon>0$ there exists a $C_1>0$ such that
\begin{equation}
    \begin{split}
     \bigg\| v\int_0^1[W_n(\cdot,y) - &W(\cdot,y)] D_1(u^*(\cdot),u^*(y))\mathrm{d}y \bigg\|_2 \\ &\leq  \sup_{x\in [0,1]}\left|\int_0^1[W_n(x,y) - W(x,y)] D_1(u^*(x),u^*(y))\mathrm{d}y \right| \|v\|_2\\
     &< \left(\varepsilon + C_1 \|d_{W_n}-d_W\|_\infty\right) \|v\|_2.
    \end{split}
\end{equation}
To control the second term on the right-hand side of (\ref{eqn: (DFn - DF)v Stability}) we require an $L^2$ version of Lemma~\ref{lem:W-Wnappliedtwice} which we construct here. We will also make use of the bound \cite[Lemma~E.6]{janson2010graphons}
\begin{equation} \label{eqn:TW cutnorm bound}
    \|T_W\|_{2,2} \leq 2\sqrt{2}\|W\|_\square^{1/2},
\end{equation}
where $T_W:v\to \int_0^1 W(x,y)v(y)\mathrm{d}y$, as defined in \eqref{eq:2ndcutnorm}.  
We now proceed as in the proof of Lemma~\ref{lem:W-Wnappliedtwice}. 

Take any $\varepsilon>0$. Then, from the continuity of $D_2$ there exists an $M\in\N$ such that 
\begin{equation}\label{eq:D1 step}
    D_2(u^*(x),u^*(y)) = \sum_{i,j = 1}^M D_{ij}\zeta_i(x)\zeta_j(y) + \Delta D(x,y)
\end{equation}
with $\zeta_{i,j}$ defined in \eqref{eq:zeta} above and the remainder term bounded by $|\Delta D(x,y)| < \varepsilon.$ Taking any $x\in [0,1]$, we rewrite the last term of \eqref{eqn: (DFn - DF)v Stability} using the new expansion \eqref{eq:D1 step} to get
\begin{eqnarray*}
    \sum_{i,j=1}^M \int_0^1 [W(x,y)-W_n(x,y)]D_{ij}\zeta_i(x)\zeta_j(y)v(y)\mathrm{d}y + \int_0^1 [W(x,y) - W_n(x,y)] \Delta D(x,y)v(y)\mathrm{d}y.  
\end{eqnarray*}
The first term in the above can be bounded as
\begin{align*}
    \left\| \sum_{i,j=1}^M \int_0^1 [W(\cdot,y)-W_n(\cdot,y)]D_{ij}\zeta_i(\cdot)\zeta_j(y)v(y)\mathrm{d}y \right\|_2 &\leq M^2 \sup_{i,j} |D_{ij}| \|T_{W_n - W}\|_{2,2} \|v\|_2\\
    &\leq M^2 \sup_{i,j} |D_{ij}| 2\sqrt{2}\|W_n - W\|_\square^{1/2} \|v\|_2,
\end{align*}
where we have used \eqref{eqn:TW cutnorm bound}. The second term containing the remainder $\Delta D$ can be controlled directly using $|W(x,y) - W_n(x,y)|\leq 1$ and $|\Delta D(x,y)| < \varepsilon$, wherein one has
\begin{equation*}
    \left\| \int_0^1 [W(x,y) - W_n(x,y)] \Delta D(x,y)v(y)\mathrm{d}y \right\|_2 \leq \varepsilon \left\| \int_0^1 v(y)\mathrm{d}y \right\|_2
    = \varepsilon \left| \int_0^1 v(y)\mathrm{d}y \right|
    \leq \varepsilon \|v\|_2.
\end{equation*}
We conclude that for any $\varepsilon>0$ there is a $C_2$, independent of $n$, such that 
\begin{equation}\label{}
    \left\| \int_0^1 [W(x,y) - W_n(x,y)] D_2(u^*(x),u^*(y)) v(y) \mathrm{d}y \right\|_2 \leq (\varepsilon + C_2\|W - W_n\|^{1/2}_\square) \|v\|_2.
\end{equation}

Now we assemble all these pieces to arrive at the proof of this Lemma. Let $\varepsilon > 0$ and $v\in L^2$. Recall the Lipschitz bound $L > 0$ and take $\rho < \min\{\varepsilon/(7L),\rho_T\}$. Then from Theorem~\ref{thm:existence} there is some $N_1\in \N$ such that for $n\geq N_1$ we have that $\|u_n^* - u^*\|_\infty <\rho.$ Furthermore, there are constants $C_1,C_2 > 0$, independent of $n$, such that 
\begin{align*}
    \left\| v\int_0^1[W_n(\cdot,y) - W(\cdot,y)] D_1(u^*(\cdot),u^*(y))\mathrm{d}y \right\|_2 &\leq \left( \frac{\varepsilon}{7} + C_1 \|d_{W_n}-d_W\|_\infty \right) \|v\|_2\\
    \left\| \int_0^1 [W_n(\cdot,y) - W(\cdot,y)] D_2(u^*(\cdot),u^*(y))v(y)\mathrm{d}y \right\|_2 & \leq \left(\frac{\varepsilon}{7} + C_2\|W - W_n\|^{1/2}_\square \right) \|v\|_2.
\end{align*}
From Hypothesis \ref{hyp:Graphon} there is an $N_2\in \N$ such that for $n\geq N_2$ we have both $\|d_{W_n}-d_W\|_\infty < \varepsilon/(7C_1)$ and $\|W - W_n\|^{1/2}_\square < \varepsilon/(7C_2).$ So, putting this all together gives that for $n\geq\max\{N_1,N_2\}$ we have
\begin{align*}
    \|[DF_n(u^*_n) - DF(u^*)]v\|_2 &=  \|[DF_n(u_n^*) - DF_n(u^*)]v + [DF_n(u^*) - DF(u^*)]v\|_2\\
    &\leq \left( 3L\rho  + \frac{\varepsilon}{7} + C_1 \|d_{W_n}-d_W\|_\infty + \frac{\varepsilon}{7} + C_2\|W - W_n\|^{1/2}_\square \right) \|v\|_2\\
    &< \left( \frac{3L\varepsilon}{7L} + \frac{2\varepsilon}{7} + \frac{C_1\varepsilon}{7C_1} + \frac{C_2\varepsilon}{7C_2} \right) \|v\|_2\\
    &= \varepsilon\|v\|_2.
\end{align*}
Since this holds for any $v\in L^2$, it follows that $\|[DF_n(u^*_n) - DF(u^*)]v\|_{2\to 2} < \varepsilon$, concluding the proof.
\end{proof}


\begin{cor} \label{cor: upper semicontinuous}
    If there exists $\gamma > 0$ so that 
    \[
        \spec(DF(u^*))|_{C[0,1]} \subset \{z \in \mathbb{C}|\ \mathrm{Re}(z)  < -\gamma\},
    \]
    then there exists an $N \in \N$ such that for all $n\geq N$ we have 
    \[
        \spec(DF_n(u_n^*))|_{L^2} \subset \{z \in \mathbb{C}|\ \mathrm{Re}(z)  < -\gamma\},
    \]
\end{cor}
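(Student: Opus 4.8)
The plan is to work entirely in $L^2$, where operator norm convergence of the linearizations is available (this fails on $X_n$ or $C[0,1]$ because of the term $\Xi(W_n,W,u^*)$), and to invoke the classical upper semicontinuity of the spectrum of a bounded operator under operator norm perturbations. First I would transfer the spectral hypothesis to the Hilbert space setting: by Lemma~\ref{lem:DF on C = DF on L2} we have $\spec(DF(u^*))|_{L^2}=\spec(DF(u^*))|_{C[0,1]}\subset\{z\in\mathbb{C}:\mathrm{Re}(z)<-\gamma\}$, so it suffices to establish the claimed inclusion for $\spec(DF_n(u_n^*))|_{L^2}$.

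Next I would record that $DF(u^*)$ is a bounded operator on $L^2$ — it is the sum of the multiplication operator by the continuous, hence bounded, function $-Q$ and a Hilbert--Schmidt integral operator — so $\spec(DF(u^*))|_{L^2}$ is a compact set. Being a compact subset of the open half-plane $\{\mathrm{Re}(z)<-\gamma\}$, it has strictly positive distance from the closed set $\{\mathrm{Re}(z)\geq-\gamma\}$; I would pick $\varepsilon_0>0$ smaller than this distance, so that the $\varepsilon_0$-neighborhood of $\spec(DF(u^*))|_{L^2}$ remains inside $\{\mathrm{Re}(z)<-\gamma\}$.

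Then I would apply upper semicontinuity of the spectrum. The set $K=\{z:\mathrm{dist}(z,\spec(DF(u^*))|_{L^2})\geq\varepsilon_0,\ |z|\leq\|DF(u^*)\|_{2\to 2}+1\}$ is a compact subset of the resolvent set of $DF(u^*)$, so $z\mapsto\|(DF(u^*)-zI)^{-1}\|_{2\to 2}$ is bounded on $K$, say by $M>0$. A Neumann series argument — writing $(A-zI)=(DF(u^*)-zI)\big(I+(DF(u^*)-zI)^{-1}(A-DF(u^*))\big)$ on $K$, and using $\|A\|_{2\to 2}<\|DF(u^*)\|_{2\to 2}+1$ to handle large $|z|$ — shows that any bounded operator $A$ on $L^2$ with $\|A-DF(u^*)\|_{2\to 2}<\min\{1,1/M\}$ has $\spec(A)$ contained in the $\varepsilon_0$-neighborhood of $\spec(DF(u^*))|_{L^2}$; this is precisely the upper semicontinuity of the spectrum (see \cite[Chapter~IV]{kato}). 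Finally, Lemma~\ref{lem: 2,2 operator convergence} supplies an $N\in\mathbb{N}$ so that $\|DF_n(u_n^*)-DF(u^*)\|_{2\to 2}<\min\{1,1/M\}$ for all $n\geq N$, and taking $A=DF_n(u_n^*)$ yields $\spec(DF_n(u_n^*))|_{L^2}\subset\{z:\mathrm{dist}(z,\spec(DF(u^*))|_{L^2})<\varepsilon_0\}\subset\{z:\mathrm{Re}(z)<-\gamma\}$, as claimed.

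The only substantive point is the upper semicontinuity step: since $DF(u^*)$ and $DF_n(u_n^*)$ are not self-adjoint, one cannot appeal to self-adjoint perturbation estimates and must instead combine the uniform resolvent bound on the compact piece $K$ of the resolvent set with the crude bound $\|DF_n(u_n^*)\|_{2\to 2}\leq\|DF(u^*)\|_{2\to 2}+1$ to prevent spectrum from escaping to infinity. Everything else is bookkeeping resting on Lemmas~\ref{lem:DF on C = DF on L2} and \ref{lem: 2,2 operator convergence}.
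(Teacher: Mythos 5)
Your proposal is correct and follows essentially the same route as the paper's proof: transfer the spectral hypothesis to $L^2$ via Lemma~\ref{lem:DF on C = DF on L2}, use the operator norm convergence of Lemma~\ref{lem: 2,2 operator convergence}, control the spectrum at infinity by a uniform operator norm bound, and apply upper semicontinuity of the spectrum on a compact subset of the resolvent set. The only difference is cosmetic: the paper cites \cite[Theorem IV.3.1]{kato} for the semicontinuity step, whereas you unfold its Neumann-series proof explicitly.
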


\begin{proof}
From Lemma~\ref{lem: 2,2 operator convergence} we conclude that $\|DF_n(u_n^*)\|_{2,2}$ is uniformly bounded by some $R>0$ and so its spectrum lies inside the ball of radius $R$ centered at 0 in the complex plane, denoted $B_R(0)$. We define the half-plane $\Lambda_\gamma := \{z\in \C : \mathrm{Re}(z) < -\gamma\}$ and the compact set $K_\gamma = B_R(0)\setminus \Lambda_\gamma$. Clearly $K_\gamma$ lies inside the resolvent set $\mathbb{C}\setminus\sigma(DF(u^*))_{C[0,1]} = \mathbb{C}\setminus\sigma(DF(u^*))_{L^2}$. Then \cite[Theorem IV.3.1]{kato} gives that there is a $\delta>0$ such that the resolvent set $\mathbb{C}\setminus\sigma(DF_n(u_n^*))$ belongs to $K_{\gamma}$ if $\|DF_n(u^*_n) - DF(u^*)\|_{2,2} < \delta$. Lemma~\ref{lem: 2,2 operator convergence} guarantees this is possible for $n$ sufficiently large, so we conclude $\spec(DF_n(u_n^*))|_{L^2} \subseteq \Lambda_{\gamma}$.
\end{proof}

Finally we relate these results about the step-graphon operator $DF_n(u_n^*)$ back to the original discrete operator $DG_n(\textbf{u}_n^*)$ which we claim in Theorem~\ref{thm:stability} has only stable eigenvalues.

\begin{lem}\label{lem:spec DGn in spec DFn}
    Any eigenvalue $\lambda$ of the matrix $DG_n(\textbf{u}_n^*))\in\mathbb{R}^{n\times n}$ necessarily belongs to $\spec(DF_n(u_n^*))_{L^2}$.
\end{lem}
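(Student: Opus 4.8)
The plan is to show that any eigenvector of the matrix $DG_n(\unstar)$ lifts to an eigenfunction of $DF_n(u_n^*)$ on $L^2$ with the same eigenvalue, so that $\lambda$ must belong to the point spectrum of $DF_n(u_n^*)$ posed on $L^2$, and hence to its spectrum.

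First I would fix an eigenvalue $\lambda$ of $DG_n(\unstar)$ together with an eigenvector $\mathbf{v}=(v_1,\dots,v_n)^T\neq 0$, and associate to $\mathbf{v}$ the step function $v\in X_n\subset L^2$ defined by $v(x)=v_i$ for $x\in\left[\frac{i-1}{n},\frac{i}{n}\right)$. Since $\mathbf{v}\neq 0$, the function $v$ is nonzero as an element of $L^2$.

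Next I would evaluate $DF_n(u_n^*)v$ on the subinterval $\left[\frac{i-1}{n},\frac{i}{n}\right)$. There $u_n^*$ takes the constant value $u_i^*$ and $W_n(x,y)$ equals $(A_n)_{i,j}$ for $y\in\left[\frac{j-1}{n},\frac{j}{n}\right)$, so each integral term collapses to a finite sum of the form $\frac{1}{n}\sum_{j=1}^n (A_n)_{i,j}(\cdots)$. In particular $Q_n(u_n^*(x))=-f'(u_i^*)-\frac{1}{n}\sum_j (A_n)_{i,j}D_1(u_i^*,u_j^*)$ on this interval, and a direct computation gives
\[
    (DF_n(u_n^*)v)(x) = f'(u_i^*)v_i + \frac{1}{n}\sum_{j=1}^n (A_n)_{i,j}D_1(u_i^*,u_j^*)v_i + \frac{1}{n}\sum_{j=1}^n (A_n)_{i,j}D_2(u_i^*,u_j^*)v_j.
\]
On the discrete side, differentiating $G_n(\mathbf{u})_i = f(u_i)+\frac{1}{n}\sum_j (A_n)_{i,j}D(u_i,u_j)$ shows that the right-hand side above is exactly $(DG_n(\unstar)\mathbf{v})_i$; one only has to track the diagonal contribution $\frac{1}{n}(A_n)_{i,i}D_2(u_i^*,u_i^*)v_i$, which sits inside the $j=i$ term of the $D_2$-sum on the graphon side and inside the $i$th diagonal entry of the Jacobian on the discrete side. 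Consequently $DF_n(u_n^*)v$ is the step function whose value on $\left[\frac{i-1}{n},\frac{i}{n}\right)$ equals $(DG_n(\unstar)\mathbf{v})_i=\lambda v_i$, i.e. $DF_n(u_n^*)v=\lambda v$ in $L^2$.

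Finally, since $v\neq 0$ lies in the kernel of $DF_n(u_n^*)-\lambda I$ acting on $L^2$, this operator fails to be injective and hence is not invertible, so $\lambda\in\spec_{pt}(DF_n(u_n^*))|_{L^2}\subseteq\spec(DF_n(u_n^*))|_{L^2}$, as claimed. The only place requiring real care is the bookkeeping in the Jacobian computation — matching the multiplication part $-Q_n$ with the diagonal of $DG_n$ and the integral kernel with the off-diagonal entries (and the diagonal term that straddles both) — but this is an elementary verification rather than a genuine obstacle.
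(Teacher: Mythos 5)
Your proposal is correct and follows essentially the same route as the paper: lift the eigenvector to a step function in $X_n\subset L^2$, compute $DF_n(u_n^*)v$ on each subinterval where the integrals collapse to the sums defining $DG_n(\unstar)\mathbf{v}$, and conclude that $v$ is an eigenfunction with the same eigenvalue. The bookkeeping you flag (matching $-Q_n$ with the diagonal and the kernel with the remaining entries) is exactly the verification carried out in the paper's proof.
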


\begin{proof}
    Fix a value of $n\geq 1$ and suppose $\lambda$ is an eigenvalue of $DG_n(u_n^*)$ with associated eigenvector $\textbf{v}\in \R^n$. Using this eigenvector we define the step function $v(x)\in X_n$ which takes the constant value of the $i$th component of $\textbf{v}$ on the subinterval $[(i-1)/n,i/n)$, for each $i = 1 ,\dots, n$. By construction $v\in L^2$ since it is bounded. For an arbitrary $i = 1,\dots,n$ and $x\in [(i-1)/n,i/n)$ we have
    \[
        \begin{split}
        [DF_n(u_n^*)]v(x) &= f'(u_n^*(x))v(x) + v(x) \int_0^1 W_n(x,y)D_1(u_n^*(x),u_n^*(y)) \mathrm{d}y \\ &\qquad + \int_0^1 W_n(x,y)D_2(u_n^*(x) ,u_n^*(y)) v(y)\mathrm{d}y\\
        &= f'((\textbf{u}_n^*)_i)\textbf{v}_i + \frac{\textbf{v}_i}{n}\sum_{j=1}^n (A_n)_{i,j} D_1((\textbf{u}_n^*)_i,(\textbf{u}_n^*)_j) + \frac{1}{n}\sum_{j=1}^n (A_n)_{i,j} D_2((\textbf{u}_n^*)_i,(\textbf{u}_n^*)_j)\textbf{v}_j\\
        &= (DG_n(\textbf{u}_n^*)\textbf{v})_i\\
        &= \lambda \textbf{v}_i\\
        &= \lambda v(x).
        \end{split}
    \]
    This is true for any $x\in [(i-1)/n,i/n)$ and any $i = 1,\dots,n$, thus giving that $v$ is an eigenfunction of $DF_n(u_n^*)$. Hence, $\lambda \in \spec(DF_n(u_n^*))|_{L^2}$, completing the proof. 
\end{proof}


We conclude the section by summarizing how our work here proves Theorem~\ref{thm:stability}.

\begin{proof}[Proof of Theorem~\ref{thm:stability}]
    Assume the conditions for Theorem \ref{thm:existence} are met and let $N\in \N$ be such that for all $n\geq N$ there exists a $\textbf{u}_n^*$ satisfying $G_n(\textbf{u}_n^*) = 0.$ Further assume that $DF(u^*)$ is stable, specifically that there exists a $\gamma>0$ such that $\spec(DF(u^*))_{C[0,1]} \subseteq \Lambda_\gamma := \{z\in \C: \mathrm{Re}(z) < -\gamma\}$. Lemma~\ref{lem:DF on C = DF on L2} guarantees that $\spec(DF(u^*))_{L^2} \subseteq \Lambda_\gamma$. Then from Lemma~\ref{lem: 2,2 operator convergence} and Corollary~\ref{cor: upper semicontinuous}, we know that there exists an $M\in \N$ such that for $n\geq M$ we have $\spec(DF_n(u_n^*))\subseteq \Lambda_\gamma.$ Finally we can apply Lemma~\ref{lem:spec DGn in spec DFn} to conclude that $\spec(DG_n(\textbf{u}_n^*)) \subseteq \Lambda_\gamma$. Therefore, for $n\geq \max(N,M)$ the steady-state $\textbf{u}_n^*$ to the dynamical system \eqref{eq:main} is stable.
\end{proof}

\begin{rmk}
    While we do not state a specific result here, we remark that the operator norm convergence presented in Lemma~\ref{lem: 2,2 operator convergence} could be used to prove spectral convergence results for steady-states which have a finite number of unstable eigenvalues. We refer to Section~\ref{sec: wilsoncowan} for an example of such a system.   
\end{rmk}


\section{Examples}\label{sec:examples}

In this section, we study several examples that will illustrate our main results.  In Section~\ref{sec:Kuramoto}, we study the famous Kuramoto model and show the existence of twisted state solutions for a certain class of ring graphons. In Section~\ref{sec: wilsoncowan} we study the existence of steady-states in a version of the Wilson-Cowan model. Section~\ref{sec:lotka} provides an application of our results to a Lotka--Volterra model from population ecology. The main goal of this example is to demonstrate that for certain parameters we can show that $\mathcal{T}_n$ is itself a contraction without going to $\mathcal{S}_n$ as in our proofs. Our final example in Section~\ref{sec:bipartite} is meant to demonstrate how our results can be extended to situations beyond those covered by our main results. That is, we again study a Lotka--Volterra model, but now on a bipartite graphon which does not directly satisfy Hypothesis~\ref{hyp:Graphon}(2). We show that if the problem is broken up properly then the individual components do satisfy our hypotheses, allowing for the extension of our results.  


\subsection{Twisted states in the Kuramoto Model}\label{sec:Kuramoto}

As discussed in the introduction, graphons have long been applied to the study of coupled oscillators. Our goal here is therefore to showcase how our results can be applied to complement and extend well-known results from the literature. In particular, we return to the illustrative Kuramoto model \cite{kuramoto84} from the introduction, which describes the behavior of a system of oscillators whose coupling is encoded as a graph on $n$ vertices as follows
\begin{equation}\label{FiniteKuramoto}
    \frac{du_i}{dt} = \frac{1}{n} \sum_{j = 1}^n A_{ij}\sin(2\pi(u_j - u_i)).
\end{equation}
Here $\textbf{u} = (u_1,\dots,u_n)\in \R^n$ describes the phases, taken modulo 1, of $n$ oscillators with pairwise coupling strength which is encoded in the $n\times n$ adjacency matrix $A = [A_{i,j}]_{1 \leq i,j \leq n}$. 

A common setting for \eqref{FiniteKuramoto} is to arrange the oscillators on a ring with distance-dependent coupling between them. Thus, our application here will involve ring graphons to mimic this scenario. The graphon analogue of \eqref{FiniteKuramoto} is given by
\begin{equation}\label{GraphonKuramoto}
    \frac{du}{dt} = \int_0^{1} R(|x-y|)\sin(2\pi(u(y) - u(x)))\, dy,
\end{equation}
where $R$ denotes our ring graphon, as given in Section \ref{sec:GraphonExamples}. This equation was studied previously in \cite{SyncBasin}, although without reference to graphons, with the intention of having the results for \eqref{GraphonKuramoto} hold for \eqref{FiniteKuramoto} when the adjacency matrix is a deterministic weighted graph derived from $R$ and $n$ is large. Our results make this connection rigorous by showing that solutions in the infinite-dimensional setting persist to the large network. Moreover, our results further provide that the same results hold (with high probability) when the underlying network structure in \eqref{FiniteKuramoto} is a random graph whose connection probabilities come from the graphon $R$.

We now proceed by following the procedure of \cite{SyncBasin}. The equation \eqref{GraphonKuramoto} has an explicit family of stationary solutions given by $u^*(x)=m\left(x-\frac{1}{2}\right)$ for any integer $m$. These states are called {\em $m$-twisted states} as the solution covers the circle $m$ times in one cycle around the ring. Here the constant shift of $-m/2$ is chosen to guarantee that the solution has mean zero over $x \in [0,1]$, but the shift invariance of \eqref{FiniteKuramoto} allows for any shift to be chosen, including $0$ which was the choice in the introduction.

The linearization of \eqref{GraphonKuramoto} about a twisted state results in the linear operator $DF(u^*):C[0,1] \to C[0,1]$ acting on $v \in C[0,1]$ by
\begin{equation}
    DF(u^*)v = 2\pi \int_0^1 R(|x - y|)\cos(2\pi m(y-x))[v(y) - v(x)]\drm y.
\end{equation}
As detailed in Section~\ref{sec:GraphonExamples}, ring graphons have Fourier series expansions of the form
\begin{equation}\label{eq:Rfourier}
    R(|x-y|)=\sum_{k\in\mathbb{Z}} c_k \me^{2\pi \mbi k (x-y)},
\end{equation}
with $c_{k} = c_{-k} \in \mathbb{R}$. This allows for a precise characterization of the spectrum of $DF(u^*)$ since the Fourier basis functions are eigenfunctions\footnote{The Fourier basis functions completely characterize the spectrum since they form an orthogonal basis for $L^2$ and Lemma~\ref{lem:DF on C = DF on L2} proves that the spectrum of $DF(u^*)$ is equivalent on $C[0,1]$ and $L^2$.}. To observe this, note that  
\[ 
    DF(u^*)v=-2\pi c_m v +2\pi \int_0^1 \sum_{k\in\mathbb{Z}} c_k \mathrm{e}^{2\pi\mbi k (x-y)} \cos(2\pi m (y-x)) v(y)\mathrm{d} y,
\]
where we see that in this case we have $Q(x)=2\pi c_m$. Then, 
\begin{equation}\label{eq:kuramotoL}
    DF(u^*)\left( \me^{2\pi \mbi \ell x}\right)=\lambda_\ell \me^{2\pi \mbi \ell x}, \qquad    \lambda_\ell=\pi \left( c_{\ell+m}+c_{\ell-m}-2c_m\right).
\end{equation}
Note that $\lambda_0=0$ for any $m \in \mathbb{Z}$, coming from the symmetry $c_{m} = c_{-m}$. This is a consequence of the aforementioned translation invariance of solutions to \eqref{GraphonKuramoto}, giving that the spectrum of $DF(u^*)$ always includes 0. To account for this symmetry we restrict solutions to the space of mean-zero functions, defined as
\[  
    Y_n = \left\{ u\in X_n \ \bigg| \ \int_0^1 u(x) \mathrm{d} x =0 \right\}.   
\]
This removes the translational eigenvalue, and the twisted state is a spectrally stable solution of \eqref{GraphonKuramoto} if $\lambda_\ell<0$ for all $\ell\neq 0$. We comment on the details of using the subspace $Y_n$ instead of $X_n$ in our proofs in Appendix~\ref{sec:KuramotoApp}, while here only noting that all of our results go through after quotienting out the translational symmetry of \eqref{GraphonKuramoto}.     

Details for guaranteeing $\lambda_\ell < 0$ for all $\ell \in \mathbb{Z}\backslash \{0\}$ were worked out for the small-world graphon \eqref{SmallWorldGraphon} with $q = 0$ and $p,\alpha \in (0,1)$ in \cite{SyncBasin}. Precisely, it is shown that $\lambda_\ell < 0$ for all $\ell$ if $|m|\alpha < \mu\pi$, where $\mu \approx 0.6626$ is obtained by solving 
\begin{equation}
    \tan(\pi \mu) = \frac{2 \pi \mu}{2 - (\pi\mu)^2}.
\end{equation}
Furthermore, a proof of nonlinear stability of the twisted state, as a solution of the graphon equation \eqref{GraphonKuramoto} was obtained in \cite{wrightex}. These ideas combine to imply that there exist ring graphons $R(|x-y|)$ for which stable twisted state solutions of \eqref{GraphonKuramoto} are known to exist. These results can further be leveraged to apply the results of this manuscript. For example, in the introduction we saw Figure~\ref{twisted_state_graphs} that presented twisted states as solutions to \eqref{GraphonKuramoto} with $\alpha = 0.2$, $p = 1/(2\pi\alpha) \approx 0.8$, and $q = 0$, as well as the solutions guaranteed (with high probability) by Theorem~\ref{thm:existence} on a random graph with $n = 200$ vertices. With this value of $\alpha$ all twisted states in Figure~\ref{twisted_state_graphs} are stable, as expected from Theorem~\ref{thm:stability}. 

\begin{rmk} 
One ring graphon structure that can immediately be ruled out from the application of our results is that of Erd\H{o}s--R\'eyni graphons where $W(x,y)$ is constant. The reason for this can be traced to the fact that in \eqref{eq:Rfourier} the coefficients $c_k$ are all zero aside from $c_0=p$. Consulting \eqref{eq:kuramotoL} one observes that $\lambda=0$ is an eigenvalue of infinite multiplicity which prevents the existence of a bounded inverse for the linearization. It turns out that \eqref{FiniteKuramoto} posed on sequences of finite graphs converging to an Erd\H{o}s--R\'eyni graphon cannot be guaranteed to support the same structures as the limiting graphon equation. We refer the interested reader to \cite{Nonex} where \eqref{FiniteKuramoto} is considered on both complete and Paley graphs which converge to the same constant graphon in cut norm. On complete graphs the twisted states are shown to always be stable, while on Paley graphs they are always unstable. This discrepancy between graphs with the same limiting graphon highlights the necessity of $DF(u^*)$ having a bounded inverse in our results. 
\end{rmk}


\subsection{The Wilson-Cowan Model}\label{sec: wilsoncowan}

As a second example we consider a model of excitation in neural networks. It takes the form of a modified Wilson-Cowan model, as defined in \cite{sanhedrai23},
\begin{equation} \label{discrete neuro}
    \frac{\drm u_i}{\drm t} = -u_i + \frac{\lambda}{n} \sum_{j = 1}^n A_{ij} \frac{1}{1+\exp(\mu - \delta u_j)}, \qquad i = 1,\dots,n.
\end{equation}
Here $u_i$ represents the excitation level of the $i$-th neuron, which decays due to the linear self-interaction term and is sustained by couplings to other nodes. Here $\mu$ and $\delta$ are parameters that define the activation threshold. Natural settings for neuronal models such as \eqref{discrete neuro} are over large networks, which motivates analyzing the mean-field graphon version, given by
\begin{equation}\label{continuous neuro}
    \frac{du}{dt} = -u(x) + \lambda \int_0^1 \frac{W(x,y)}{1+\exp(\mu - \delta u(y))} \, dy.
\end{equation}  
In this subsection we will identify stable steady-states of \eqref{continuous neuro} and apply our results to demonstrate the existence of steady-states to \eqref{discrete neuro} over classes of large, random networks.

Prior to stating our first result, we recall that a graph whose vertices all have the same degree is called regular. Similarly a graphon can be called regular, or degree-constant, if $d_W(x) = \int_0^1 W(x,y)\, dy = \Omega\in \R$ for all $x \in [0,1]$. This family of graphons include many commonly studied types like Erd\H{o}s-R\'enyi and ring graphons.  The next lemma shows that this system has up to three constant steady-state solutions when $W$ is regular. We will omit the proof since it is a straightforward computation. 

\begin{lem} \label{lem:ConstantSolution}
    Let $W$ be a regular graphon so that $d_W(x) = \Omega \in [0,1]$. Then, \eqref{continuous neuro} has a family of constant steady-state solutions defined implicitly by
        \begin{equation} \label{eq:Gamma}
            u^* = \frac{\lambda\Omega}{1+\exp(\mu - \delta u^* )},
        \end{equation}
    which for each $\Omega\in [0,1]$ has either one, two, or three solutions.  
\end{lem}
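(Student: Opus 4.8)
The plan is to reduce \eqref{eq:Gamma} to a one-dimensional root-counting problem and then exploit the geometry of a logistic sigmoid meeting the identity line. First I would substitute the constant ansatz $u\equiv c$ into the equilibrium condition $-u(x)+\lambda\int_0^1 W(x,y)(1+\exp(\mu-\delta u(y)))^{-1}\,dy=0$ associated with \eqref{continuous neuro}. Because $W$ is regular with $d_W(x)=\Omega$, the integral evaluates to $\Omega(1+\exp(\mu-\delta c))^{-1}$, which is independent of $x$, so the constant function $u\equiv c$ is an equilibrium of \eqref{continuous neuro} exactly when $c$ solves \eqref{eq:Gamma}. It therefore suffices to count the real roots of $\phi(c):=h(c)-c$, where $h(c):=\lambda\Omega(1+\exp(\mu-\delta c))^{-1}$.

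Next I would record the shape of $h$. If $\Omega=0$ then $h\equiv 0$ and $c=0$ is the unique root, so assume $\Omega>0$ (recalling that $\lambda,\delta>0$ in this model). Then $h$ is a bounded, strictly increasing logistic function with $h(c)\to 0$ as $c\to-\infty$ and $h(c)\to\lambda\Omega$ as $c\to+\infty$. Consequently $\phi(c)\to+\infty$ as $c\to-\infty$ and $\phi(c)\to-\infty$ as $c\to+\infty$, so the intermediate value theorem already guarantees at least one root.

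For the upper bound of three roots I would differentiate, obtaining $\phi'(c)=h'(c)-1$ with $h'(c)=\lambda\Omega\delta\,\exp(\mu-\delta c)(1+\exp(\mu-\delta c))^{-2}$. Setting $s=\exp(\mu-\delta c)$ one has $h'=\lambda\Omega\delta\,s(1+s)^{-2}$, and since $s\mapsto s(1+s)^{-2}$ is unimodal on $(0,\infty)$ with maximum $1/4$ at $s=1$, and $s$ is monotone in $c$, the derivative $h'$ is a strictly positive, unimodal bump in $c$ (increasing, then decreasing, with a single interior maximum $\lambda\Omega\delta/4$ at $c=\mu/\delta$). Hence $h'(c)=1$ has at most two solutions, so $\phi'$ changes sign at most twice and $\phi$ is monotone on at most three intervals: decreasing, then possibly increasing, then decreasing. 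A continuous function with this profile that runs from $+\infty$ down to $-\infty$ can vanish at most three times; equivalently, the graph of the identity meets the graph of the sigmoid $h$ in at most three points. Combining with the existence statement gives that \eqref{eq:Gamma} has $1$, $2$, or $3$ solutions, the count $2$ corresponding precisely to a tangency between the identity and the sigmoid.

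The only delicate point, and the step I would watch most carefully, is the shape argument: one must verify that $h'$ is genuinely unimodal so that the level set $\{h'=1\}$ has at most two elements, and then correctly convert the statement that $\phi'$ has at most two sign changes into the conclusion that $\phi$ has at most three zeros, keeping track of the end behaviour $\phi(\mp\infty)=\pm\infty$. Since all of this is elementary single-variable calculus, the detailed verification is omitted.
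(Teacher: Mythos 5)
Your argument is correct and complete. The paper itself omits the proof of this lemma (calling it ``a straightforward computation''), so there is no official argument to compare against; what you have written supplies exactly the missing content. The reduction to the scalar fixed-point equation via $d_W(x)\equiv\Omega$ is right, and the root count is sound: $h'(c)=\lambda\Omega\delta\,s(1+s)^{-2}$ with $s=\exp(\mu-\delta c)$ is indeed a strictly positive unimodal bump (since $s\mapsto s(1+s)^{-2}$ is unimodal and $s$ is monotone in $c$), so $\phi'=h'-1$ has at most two sign changes, and a function that is piecewise monotone on three intervals and runs from $+\infty$ to $-\infty$ has at most three zeros, with existence from the intermediate value theorem. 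The paper's surrounding discussion takes a slightly different, non-rigorous view of the same fact: it inverts the relation to write $\Omega=\tfrac{1}{\lambda}u^*\bigl(1+e^{\mu-\delta u^*}\bigr)$ and reads the solution count off the S-shaped bifurcation curve (Figure~\ref{Scurve}); your calculus argument is the rigorous version of that picture, with the two-solution case corresponding to the saddle-node tangencies. The only cosmetic caveat is that you implicitly use $\lambda,\delta>0$, which the paper assumes throughout this example (e.g.\ $(\mu,\delta,\lambda)=(4,1,22)$) but never states formally; it would be worth one clause to say so, and to note that the degenerate cases $\lambda=0$ or $\Omega=0$ give the unique root $u^*=0$.
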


\begin{figure}[t]
    \centering
    \includegraphics[width = 0.75\textwidth]{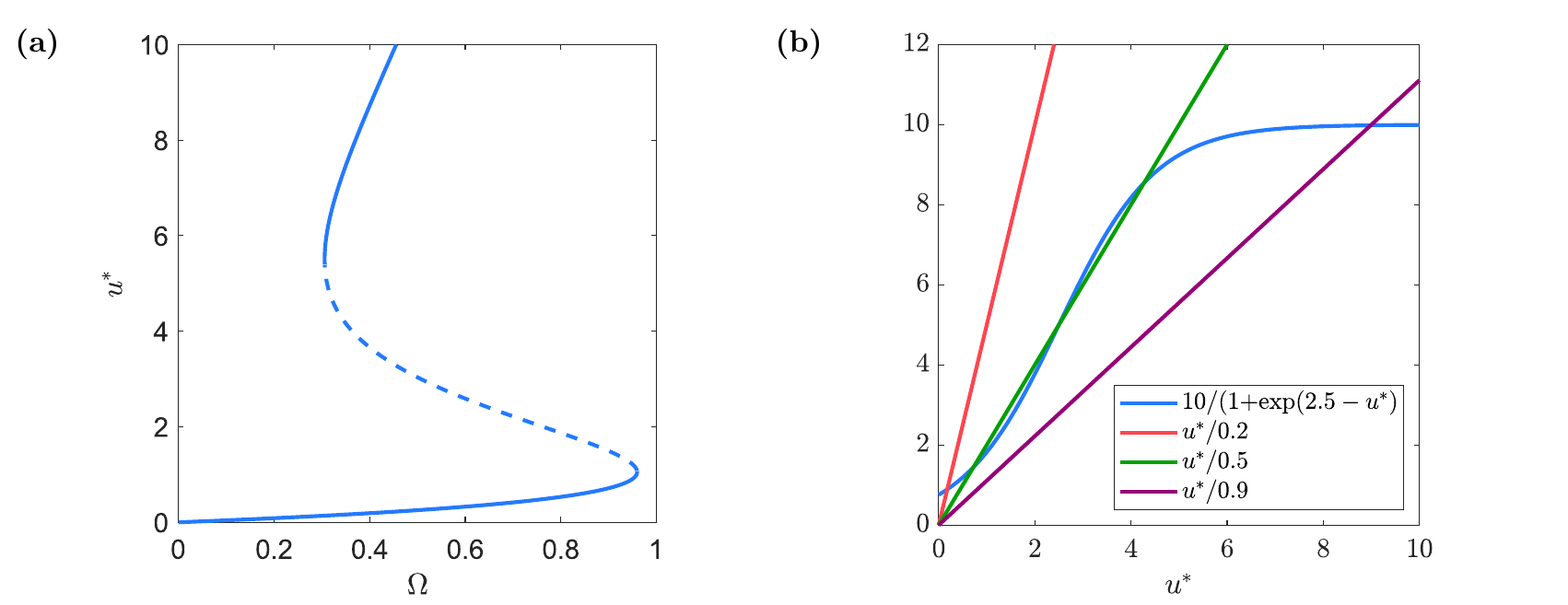}
    \caption{Homogeneous steady-states for the Wilson-Cowan type model (\ref{continuous neuro}).  The left panel plots homogeneous steady-states for degree constant graphons as a function of the degree.  Note the region of bistability. In the right panel we plot $\lambda/ (1+\mathrm{exp}(\mu-\delta u^*)$ and $\frac{u^*}{\Omega}$.  Intersections of these curves represent steady-states and bistability is observed as $\Omega$ is varied.
    }
    \label{Scurve}
\end{figure}

We can visualize the presence of multiple solutions to the implicit equation \eqref{eq:Gamma} by treating $\Omega$ as a bifurcation parameter. Indeed, for fixed $(\mu,\delta,\lambda)$ we can rearrange to get
\begin{equation}
    \Omega=\frac{1}{\lambda}u^*\left(1+e^{\mu-\delta u^*}\right).
\end{equation}
This bifurcation curve is plotted in Figure~\ref{Scurve} with $(\mu,\delta,\lambda) = (4,1,22)$, where one sees the presence of a region of bistability wherein two co-existing stable steady-states can be found. For example, at the value $\Omega = 0.5$ which is firmly inside the region of bistability, one finds a stable homogeneous state $u^* \approx 0.25$ of low excitation and another at $u^* \approx 11$ for high excitation. These stable states are separated by an intermediate unstable homogeneous state. 

For exposition, let us consider the simple case of an Erd\H{o}s--R\'eyni graphon $W(x,y) = p \in (0,1)$. Then $\Omega = p$, so we can investigate the stability of the homogeneous states for fixed parameters $(\mu,\delta,\lambda)$ depending on the value of $p$. The linearization about a homogeneous steady-state to \eqref{continuous neuro} takes the form
\begin{equation}
    [DF(u^*)v](x) = -v(x) + \lambda   \int_0^1 \frac{W(x,y) \delta \exp(\mu - \delta u^*)}{(1+\exp(\mu - \delta u^*))^2} v(y) \, dy.
\end{equation}
In the case of an Erd\H{o}s--R\'eyni graphon we can greatly simply the above linearization to get 
\begin{equation}
    [DF(u^*)]v(x) = -v(x) + R(u^*,p,\lambda,\delta,\mu) \int_0^1v(y)dy ,
\end{equation}
where we denote the constant, parametrically-dependent term
\begin{equation}
    R(u^*,p,\lambda,\delta,\mu)=\frac{\delta (u^*)^2 e^{\mu-\delta u^* }}{\lambda p}.
\end{equation}
As was the case with the Kuramoto model, the eigenfunctions of $DF(u^*)$ are the Fourier basis functions. We state our results on the spectrum of $DF(u^*)$ in the following lemma, which again is stated without proof.

\begin{lem}\label{lem:NeuroStability}
    Let $W(x,y) = p \in[0,1]$ for all $(x,y) \in [0,1]^2$ and suppose $u^*$ is a homogeneous steady-state solution to \eqref{continuous neuro}. Then, the spectrum of the linearization of \eqref{continuous neuro} consists of only two points $-1$ and $-1+R(u^*,p,\lambda,\delta,\mu)$. The multiplicity of the eigenvalue $-1+R(u^*,p,\lambda,\delta,\mu)$ is one with an eigenspace spanned by the constant function on $[0,1]$. The eigenvalue $-1$ has infinite multiplicity. 
\end{lem}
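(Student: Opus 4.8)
The plan is to reduce $DF(u^*)$ to a rank-one perturbation of $-I$ and then read off the spectrum directly. First I would record that for the Wilson--Cowan nonlinearity we have $f(u)=-u$ and $D(u(x),u(y))=\lambda/(1+\exp(\mu-\delta u(y)))$, so that $f'(u^*)=-1$ and $D_1\equiv 0$; hence $Q(x)\equiv 1$ and $\mathrm{Rng}(-Q(x))=\{-1\}$. By Lemma~\ref{lem:fredholm} this already tells us that $-1\in\sigma_{ess}(DF(u^*))$, while every other point of the spectrum can only be an eigenvalue of an operator differing from $-I$ by a rank-one term, so there can be at most one of them.

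Next I would confirm the stated formula $[DF(u^*)v](x)=-v(x)+R\int_0^1 v(y)\,dy$; the only substantive step is the algebraic identity showing the constant coefficient $\lambda p\,\delta\, e^{\mu-\delta u^*}/(1+e^{\mu-\delta u^*})^2$ equals $R(u^*,p,\lambda,\delta,\mu)$, which follows by substituting $1/(1+e^{\mu-\delta u^*})=u^*/(\lambda p)$, i.e.\ the steady-state relation \eqref{eq:Gamma} with $\Omega=p$. Writing $\Pi v:=\bigl(\int_0^1 v(y)\,dy\bigr)\mathbf{1}$, one checks $\Pi^2=\Pi$, so $\Pi$ is a rank-one projection with range $\mathrm{span}\{\mathbf{1}\}$ and kernel the mean-zero functions; on $L^2$ it is simply the orthogonal projection onto the constants. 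Thus $DF(u^*)=-I+R\,\Pi$.

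The spectral description is then immediate. At the point $-1$ we have $DF(u^*)+I=R\Pi$, whose kernel is the infinite-dimensional space of mean-zero functions, so $-1$ is an eigenvalue of infinite multiplicity (consistent with $-1\in\sigma_{ess}$). For any $z\neq-1$ we factor $DF(u^*)-zI=-(1+z)\bigl(I-\tfrac{R}{1+z}\Pi\bigr)$; since $\Pi$ is a projection, $I-c\Pi$ is invertible with inverse $I+\tfrac{c}{1-c}\Pi$ whenever $c\neq1$, so $DF(u^*)-zI$ is invertible unless $R/(1+z)=1$, i.e.\ $z=-1+R$. Hence $\sigma(DF(u^*))=\{-1,-1+R\}$. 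Solving $DF(u^*)v=(-1+R)v$ reduces to $R(\Pi v-v)=0$, i.e.\ $v\in\mathrm{range}(\Pi)=\mathrm{span}\{\mathbf{1}\}$, so the eigenspace is one-dimensional and spanned by the constant function, and since $DF(u^*)=(-1)(I-\Pi)+(-1+R)\Pi$ is an honest spectral decomposition with rank-one projection $\Pi$, the algebraic multiplicity of $-1+R$ is also one. Finally Lemma~\ref{lem:DF on C = DF on L2} transfers this spectrum from $L^2$ to $C[0,1]$, and the constant eigenfunction is of course continuous.

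I do not anticipate a genuine obstacle: the whole computation is elementary once $DF(u^*)=-I+R\,\Pi$ is in hand. The two places that warrant a little care are the substitution that identifies the linearization's coefficient with $R$ (this is exactly where the implicit steady-state equation gets used), and -- if one prefers not to lean on self-adjointness of $-I+R\Pi$ on $L^2$ -- the explicit inverse of $I-c\Pi$ used to rule out any further spectrum.
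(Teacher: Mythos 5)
Your proof is correct. Note that the paper itself states Lemma~\ref{lem:NeuroStability} without proof, only gesturing at the fact that (as in the Kuramoto example) the Fourier basis functions are eigenfunctions of $DF(u^*)$ and that completeness of that basis in $L^2$ together with Lemma~\ref{lem:DF on C = DF on L2} pins down the spectrum. Your argument is the same computation in substance -- the mean-zero functions (spanned by the nonconstant Fourier modes) give the eigenvalue $-1$ and the constants give $-1+R$ -- but your packaging as $DF(u^*)=-I+R\,\Pi$ with $\Pi$ a rank-one projection, plus the explicit inverse $(I-c\Pi)^{-1}=I+\tfrac{c}{1-c}\Pi$, has the advantage of ruling out any additional spectrum directly on either $C[0,1]$ or $L^2$ without invoking completeness of the Fourier basis, and it cleanly separates the two places where actual content enters: the identification of the linearization's coefficient with $R$ via the steady-state relation \eqref{eq:Gamma}, and the consistency check with Lemma~\ref{lem:fredholm} that $\mathrm{Rng}(-Q)=\{-1\}$ forces $-1$ into the essential spectrum. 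The only degenerate situation your argument (and the lemma's statement) implicitly excludes is $p=0$, where $R$ is not defined and the spectrum collapses to $\{-1\}$; this is harmless but worth a parenthetical if you want the statement to cover all of $p\in[0,1]$.
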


\begin{figure}
    \centering
    \includegraphics[width = \textwidth]{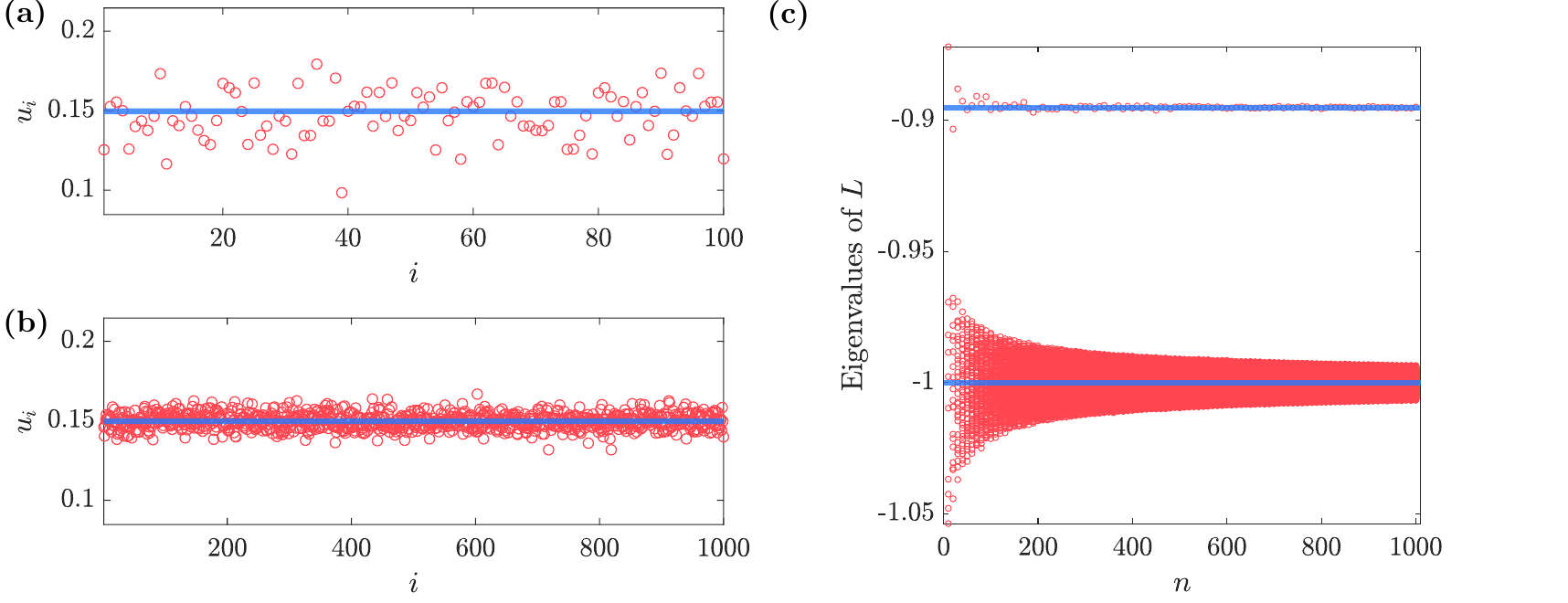}
    \caption{Left: Constant steady-state for (\ref{continuous neuro}) using $W(x,y) =p=0.5$ and $\lambda = \mu = \delta = 1$ gives $u^*\approx 0.15,$ shown in blue. Nearby steady-states for (\ref{discrete neuro}) with ER random graphs on 10 and 200 vertices shown in red. Right: Eigenvalues for $DF(u^*)$ using ER random graphs on $n$ vertices arrayed vertically for each $n$.}
    \label{HugeNeuro}
\end{figure}

Therefore, from Lemma~\ref{lem:NeuroStability} we see that if $R(u^*,p,\lambda,\delta,\mu)<1$ then the homogeneous steady-state $u^*$ to \eqref{continuous neuro} is stable. In this case, Theorems~\ref{thm:existence} and \ref{thm:stability} can be applied, guaranteeing, with high probability, the existence of a nearly homogeneous steady steady-state for \eqref{discrete neuro} posed on a sufficiently large Erd\H{o}s--R\'eyni random graph. We can further validate our analysis with numerical experiments. Some findings are presented Figure~\ref{HugeNeuro} where we compare identified steady-states of \eqref{discrete neuro} with $n = 100$ and $1000$ nodes. Realization of the network with fewer nodes results in a numerical solution that is not particularly close to the homogeneous solution of the continuum model \eqref{continuous neuro}. However, for larger values of $n$ the steady-state solution on the random graph is found to closely resemble the homogeneous solution from the graphon equation. Furthermore, in accordance with Theorem~\ref{thm:stability}, we see that the steady-state of the discrete problem \eqref{discrete neuro} inherits the stability of homogeneous solution and the eigenvalues of $DG_n$ appear to converge to the spectrum of $DF$, as depicted in Figure~\ref{HugeNeuro}.

\begin{rmk}
  As we mentioned above, system (\ref{continuous neuro}) exhibits bistability for some choices of parameters.  This is illustrated in Figure~\ref{Scurve} where two stable branches of homogeneous states are connected by a branch of unstable homogeneous states.  The branches meet at saddle-node bifurcations where $R(u^*,p,\lambda,\delta,\mu)=1$ and $DF(u^*)$ has a zero eigenvalue with multiplicity one.  Since $DF(u^*)$ is not invertible at these points our results do not apply. The study of tracking bifurcations from a continuum model back down to the discrete model will the focus of future research.  
\end{rmk}

\subsection{Lotka-Volterra Competition Model} \label{sec:lotka}

The Lotka--Volterra model can be employed to describe the interaction of competing or cooperating species \cite{volterra39,hu22}. Letting $u_i$ denote the abundance of the $i$th species, then a (competitive) Lotka--Volterra model for $n \geq 1$ interacting species is given by  
\begin{equation} \label{eq:finiteLV}
    \frac{\drm u_i}{\drm t}=u_i(1-u_i)-\frac{\lambda}{n} \sum_{j=1}^n A_{ij} u_iu_j . 
\end{equation}
Here $A = [A_{ij}]_{1 \leq i,j \leq n}$ is a matrix whose non-zero entries represent the existence of a competitive interaction between two species. Here we are assuming that each species undergoes simple logistic growth in the absence of the other species, while we have state-dependent quadratic interactions between the species.

As in the previous subsection, we will focus on the case of Erd\H{o}s--R\'eyni graphons. The purpose for this is primarily to illustrate that the proofs of our main results can be simplified in this limited scenario. First,  for pair-wise interactions that  occur randomly with some probability $p \in [0,1]$, the limiting graphon system for \eqref{eq:finiteLV} is given by
\begin{equation}\label{eq:LV}
    u_t = u(1-u)-\lambda p\int_0^1 u(x)u(y)\mathrm{d}y.  
\end{equation}
We summarize our findings for \eqref{eq:LV} with the following lemma, which is again stated without proof due to its simplicity.

\begin{lem}\label{lem:LV}
    The constant function $u^* = 1/(1 + \lambda p)$ is a steady-state solution to \eqref{eq:LV}. Furthermore, the linearization about this steady-state results in the linear operator whose action on $C[0,1]$ is given by
    \begin{equation}
        [DF(u^*)v](x) = \frac{-1}{1+\lambda p}v(x)-\frac{\lambda }{1+\lambda p}\int_0^1 p v(y)\mathrm{d}y, 
    \end{equation}
    and has spectrum consisting of two points: $\frac{-1}{1+\lambda p}$ and $-1$. The spectral element $\frac{-1}{1+\lambda p}$ has infinite multiplicity, while $-1$ is an eigenvalue of multiplicity 1 with eigenspace spanned by the constant functions.      
\end{lem}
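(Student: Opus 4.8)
The plan is to verify the three assertions directly, exploiting the fact that the graphon here is the constant $p$. Writing \eqref{eq:LV} in the form \eqref{eq:main2} we have $f(u)=u(1-u)$, $D(a,b)=-\lambda ab$, and $W(x,y)\equiv p$. Evaluating $F$ on a constant function $u\equiv c$ gives $F(c)=c(1-c)-\lambda p c^2 = c\bigl(1-(1+\lambda p)c\bigr)$, which vanishes exactly when $c=0$ or $c=1/(1+\lambda p)$; since a constant is continuous, this shows $u^*=1/(1+\lambda p)$ is the continuous steady state required. For the linearization I would use the general formula $DF(u^*)v = f'(u^*)v + \int_0^1 W(x,y)\bigl[D_1(u^*(x),u^*(y))v(x)+D_2(u^*(x),u^*(y))v(y)\bigr]\mathrm{d}y$ with $f'(u)=1-2u$, $D_1(a,b)=-\lambda b$, $D_2(a,b)=-\lambda a$. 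Substituting $u^*=1/(1+\lambda p)$ and $\int_0^1 p\,\mathrm{d}y=p$, the pointwise coefficients collapse to $-1/(1+\lambda p)$ and one obtains precisely $[DF(u^*)v](x)=\tfrac{-1}{1+\lambda p}v(x)-\tfrac{\lambda}{1+\lambda p}\int_0^1 p v(y)\,\mathrm{d}y$. In passing this computation records $Q(x)\equiv 1/(1+\lambda p)>0$, so Hypothesis~\ref{hyp:ContSol} is satisfied (and Hypothesis~\ref{hyp:Graphon}(2) is trivial for a constant graphon).

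For the spectrum, the key observation is that $DF(u^*)$ is a rank-one perturbation of the scalar multiplication operator $-\tfrac{1}{1+\lambda p}I$, since $\int_0^1 p v(y)\,\mathrm{d}y$ depends on $v$ only through its mean. By Lemma~\ref{lem:fredholm}, because $Q$ is the constant $1/(1+\lambda p)$ we have $\sigma_{ess}(DF(u^*))=\{-1/(1+\lambda p)\}$; moreover this point is an eigenvalue of infinite multiplicity, since $DF(u^*)v = -\tfrac{1}{1+\lambda p}v$ for every mean-zero $v\in C[0,1]$, an infinite-dimensional space. To find the point spectrum outside $\{-1/(1+\lambda p)\}$ I would solve $DF(u^*)v=\mu v$: rearranging, $(\mu+\tfrac{1}{1+\lambda p})v$ is a constant multiple of the constant function, so $v$ itself must be a nonzero constant; plugging $v\equiv 1$ into the eigenvalue equation forces $\mu=-1$, with eigenspace exactly the span of the constants. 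Hence $\sigma(DF(u^*))=\{-1/(1+\lambda p),\,-1\}$ (distinct whenever $\lambda p>0$) with the stated multiplicities.

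There is essentially no obstacle here: everything reduces to elementary algebra precisely because the Erd\H{o}s--R\'enyi graphon is constant, which is why the statement can be recorded without a detailed proof. The only point requiring a little care -- rather than a naive spectral argument -- is to invoke Lemma~\ref{lem:fredholm} to confirm that the essential spectrum contributes nothing beyond the single point $-1/(1+\lambda p)$, and to note that the mean-zero subspace of $C[0,1]$ furnishes the claimed infinite-dimensional eigenspace.
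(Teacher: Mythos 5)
Your proof is correct and is exactly the routine direct computation the authors have in mind when they state the lemma "without proof due to its simplicity": the algebra for the steady state, the collapse of the pointwise coefficients to $-1/(1+\lambda p)$ (equivalently $Q\equiv 1/(1+\lambda p)>0$), and the spectral analysis via the rank-one perturbation all check out. Your use of Lemma~\ref{lem:fredholm} to pin the essential spectrum at $\mathrm{Rng}(-Q)=\{-1/(1+\lambda p)\}$ and the mean-zero subspace as the infinite-dimensional eigenspace is precisely the intended argument.
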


\begin{figure} 
    \centering 
    \includegraphics[width = 0.9\textwidth]{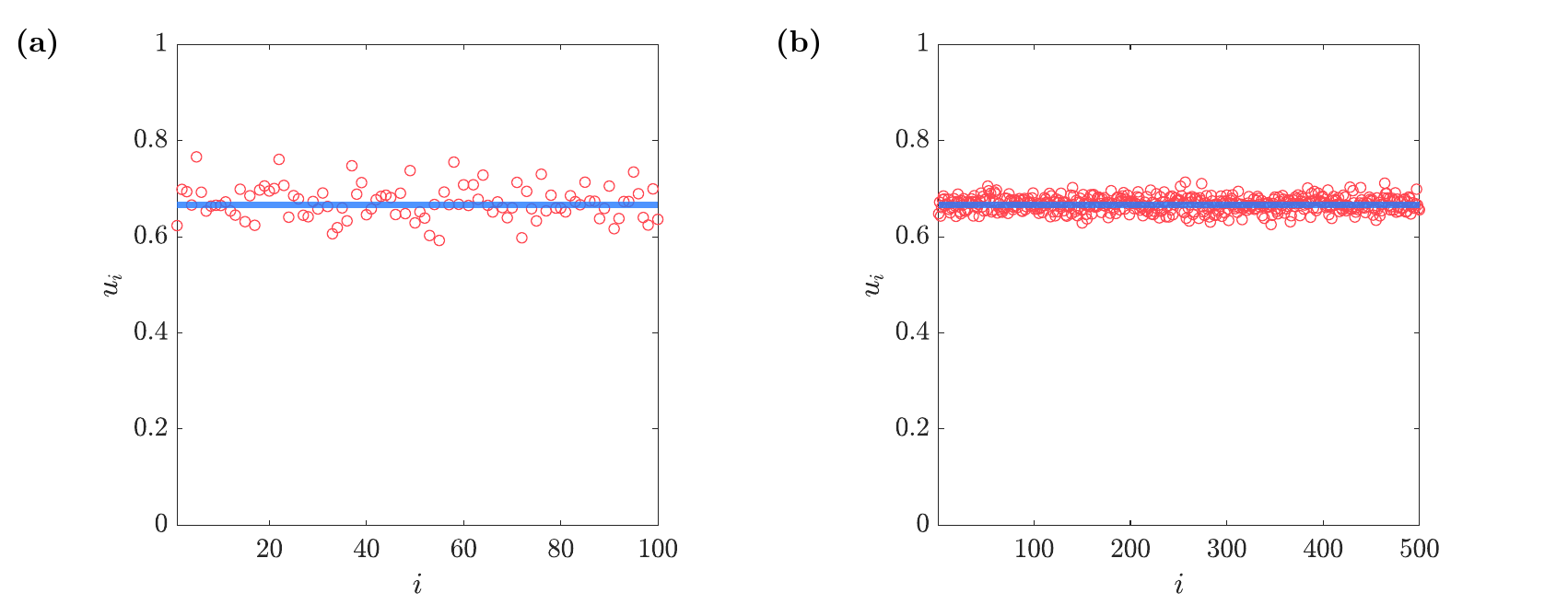}
    \caption{Left: steady-state solutions of the Lotka-Volterra model.  For a constant graphon with $p = 1/2$ and $\lambda = 1$ the graphon equation in (\ref{eq:LV}) has the solution $u^* = 2/3$ which is depicted in blue. For a random realization on $n=200$ nodes we find a nearby solution vector to (\ref{eq:finiteLV}) shown in red. Right: 
    The steady-state for the system (\ref{eq:finiteLV}) with a random realization on $n=1000$ nodes.  } \label{fig:LV}
\end{figure}

From Lemma~\ref{lem:LV} we see that for any $\lambda > 0$ we have that the homogeneous steady-state $u^* = 1/(1 + \lambda p)$ of \eqref{eq:LV} is stable. Theorem~\ref{thm:existence} and Theorem~\ref{thm:stability} then imply the existence and stability of steady-states of (\ref{eq:finiteLV}) for $n$ large and with high probability.  Steady state solutions of (\ref{eq:finiteLV}) are located numerically in Figure~\ref{fig:LV}.

In terms of the mathematical analysis presented in this paper, one interesting aspect of this example is that it sometimes allows for a simpler route to prove the  existence result Theorem~\ref{thm:existence}. The homogeneous steady-state in Lemma~\ref{lem:LV} exists for all $\lambda > 0$, but when $\lambda \in (0,1)$ we can show that $\mathcal{T}_n$ is a contraction on $X_n$, using the notation of Section~\ref{sec:proof1}. Indeed, here we have 
\[ 
    \frac{\Xi(W_n,W,u^*)}{Q(x)}v=\int_0^1 \lambda \left[p-W_n(x,y)\right]v(y)\mathrm{d}y. 
\]
If $W_n(x,y)$ only takes values of zero or one, as it would in the case of a random graph, then the operator norm
\[ 
    \left\|\frac{\Xi(W_n,W,u^*)}{Q(\cdot)}\right\|_{X_n\to X_n}\leq \lambda\ \mathrm{max}\{p,1-p\}   
\]
and the operator $\mathcal{T}_n$ will be a contraction if this operator norm is less one. Of course, one way to guarantee this is to have $\lambda \in (0,1)$, while one can take larger values of $\lambda$ so long as $p$ is taken so that $\lambda\ \mathrm{max}\{p,1-p\} < 1$. Therefore, in this limited case one does not require using the second-iterate mapping $\mathcal{S}_n$ to prove the result of Theorem~\ref{thm:existence}.


\subsection{Ecological Competition with Mutualistic Interactions}\label{sec:bipartite}

Our final example is one of practical interest, but also lies outside the direct scope of the analysis presented already.  We discuss this example briefly, both to illustrate the issues with directly applying our main result to this class of problems, but also to demonstrate how our methods could be generalized to study systems of this form.  

The model in question is one of ecological dynamics under mutualistic interactions motivated by models presented in \cite{gao16}.  We again consider a Lotka--Volterra model, similar to \eqref{eq:finiteLV}, but now with cooperative interaction and interaction matrix given in the form of a bi-partite graph. Precisely, the model takes the form 
\begin{equation} \label{eq:finitemutual}
    \frac{du_i}{dt}=u_i(1-u_i)+\frac{\lambda}{n} \sum_{j=1}^n A_{ij} u_iu_j,  
\end{equation}
and the non-local graphon counterpart is
\begin{equation} \label{eq:graphonmutual}
    u_t=u(1-u)+\lambda \int_0^1 W(x,y) u(x)u(y)\mathrm{d}y,  
\end{equation}
where we recall that the bi-partite graphon is defined for any $p \in [0,1]$ by
 \begin{equation}\label{eq:bipartiterepeat}
        W(x,y) = \begin{cases}
            p & \mathrm{if}\ \min\{x,y\}\leq \alpha,\ \max\{x,y\} > \alpha, \\
            0 & \mathrm{otherwise}.
        \end{cases} 
    \end{equation}
We summarize our findings with the follow lemma.

\begin{lem}\label{lem:LVBipart}
    Consider \eqref{eq:graphonmutual} and suppose that $(\lambda,p,\alpha)$ are such that $\lambda^2p^2\alpha (1-\alpha)<1$. Then, \eqref{eq:graphonmutual} has a piecewise constant steady-state solution given by
    \[
        u^*(x) = \begin{cases} 
        \frac{1+\lambda p (1-\alpha)}{1-\lambda^2p^2\alpha (1-\alpha) } & 0 \leq x < \alpha, \\
        \frac{1+\lambda p \alpha}{1-\lambda^2p^2\alpha (1-\alpha) } & \alpha \leq x \leq 1.
    \end{cases}
    \]
\end{lem}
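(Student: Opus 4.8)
The plan is a direct verification. A steady-state of \eqref{eq:graphonmutual} is a function $u$ satisfying
\[
    0 = u(x)\left(1 - u(x) + \lambda \int_0^1 W(x,y) u(y)\,\drm y\right), \qquad x \in [0,1].
\]
Since the bipartite graphon \eqref{eq:bipartiterepeat} takes the single nonzero value $p$ on the rectangles $[0,\alpha)\times(\alpha,1]$ and $(\alpha,1]\times[0,\alpha)$ and is zero otherwise, the natural ansatz is the two-valued step function $u^*(x)=a$ for $x\in[0,\alpha)$ and $u^*(x)=b$ for $x\in[\alpha,1]$. I would substitute this ansatz and evaluate the nonlocal term on each piece: for $x<\alpha$ one has $\int_0^1 W(x,y)u^*(y)\,\drm y = p\,b\,(1-\alpha)$, while for $x\geq\alpha$ one has $\int_0^1 W(x,y)u^*(y)\,\drm y = p\,a\,\alpha$.

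With these expressions in hand, and seeking a solution with $a,b\neq 0$, the steady-state equations reduce (after dividing through by $u^*(x)$ on each subinterval) to the linear system $a = 1 + \lambda p(1-\alpha)\,b$ and $b = 1 + \lambda p\alpha\,a$. Eliminating $b$ gives $a\bigl(1 - \lambda^2 p^2 \alpha(1-\alpha)\bigr) = 1 + \lambda p(1-\alpha)$, and symmetrically for $b$, yielding
\[
    a = \frac{1+\lambda p(1-\alpha)}{1-\lambda^2 p^2\alpha(1-\alpha)}, \qquad
    b = \frac{1+\lambda p\alpha}{1-\lambda^2 p^2\alpha(1-\alpha)},
\]
which is exactly the claimed $u^*$. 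The hypothesis $\lambda^2 p^2 \alpha(1-\alpha)<1$ is used precisely to ensure that the determinant $1-\lambda^2 p^2\alpha(1-\alpha)$ of this $2\times 2$ system is nonzero (indeed positive), so that the system is uniquely solvable and $u^*$ is a well-defined (and positive) piecewise constant function.

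There is essentially no obstacle here: the argument is a short computation, which is why the lemma is stated without proof in the body. The only point meriting a word of care is the division by $u^*(x)$; one either observes that the displayed formulas give strictly positive $a,b$ under the stated hypothesis so that the division is legitimate, or one verifies the original undivided identity $0 = u^*(x)\bigl(1 - u^*(x) + \lambda\int_0^1 W(x,y)u^*(y)\,\drm y\bigr)$ directly with these values — either way the conclusion is immediate.
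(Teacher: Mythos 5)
Your proposal is correct and matches the paper's proof: the paper uses the same two-valued step-function ansatz and reduces to the same $2\times 2$ linear system (written in matrix form), with the hypothesis $\lambda^2p^2\alpha(1-\alpha)<1$ ensuring a nonzero determinant. Your extra remark about the legitimacy of dividing by $u^*(x)$ is a fine point of care that the paper leaves implicit.
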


\begin{proof}
    The result is obtained by assuming the form
    \[ 
        u^*(x)= \begin{cases} 
            u_1^* & 0 \leq x < \alpha \\
            u_2^* & \alpha \leq x \leq 1.
        \end{cases}
    \]
    for a solution to \eqref{eq:graphonmutual}. Plugging this into \eqref{eq:graphonmutual} results in the linear matrix equation
    \begin{equation}
        \left(\begin{array}{cc} -1 & \lambda p (1-\alpha) \\ \lambda p \alpha & -1\end{array}\right)\left(\begin{array}{c} u_1^*\\ u_2^* \end{array}\right)= \left(\begin{array}{c} -1\\ -1 \end{array}\right),
    \end{equation}
    for which the solution is given in the statement of the lemma. This concludes the proof.
\end{proof} 

The main obstacles in leveraging the result of Lemma~\ref{lem:LVBipart} to apply our results in Section~\ref{sec:mainresults} are as follows. First, bipartite graphons as given in \eqref{eq:bipartiterepeat} are neither degree constant when $\alpha \neq 1/2$ nor ring graphons and so it might not be the case that degree convergence in the $L^\infty$ norm can be obtained to satisfy Hypothesis~\ref{hyp:Graphon}(1). Precisely, if the discontinuities of the associated sequence of step graphons $W_n$ do not align with the one at $x = \alpha$ in the generating graphon $W$, then uniform degree convergence cannot be obtained. This would be the case if $x = \alpha$ lies between $(i-1)/n$ and $i/n$ for some $n \geq 1$ and $i = 1,\dots,n$. Second, it is no longer clear whether the operator $T_K$ in our proofs is compact since bipartite graphons do not satisfy Hypothesis~\ref{hyp:Graphon}(2). Again, this comes from the jump discontinuity at $x = \alpha$. Finally, there is the issue that the steady-state solution in Lemma~\ref{lem:LVBipart} is not continuous over $x \in [0,1]$, thus not satisfying Hypothesis~\ref{hyp:ContSol}.

Nonetheless, we believe that our analysis could be adapted to study the persistence of steady-states in \eqref{eq:graphonmutual}. The first step is properly constructing the step graphon $W_n(x,y)$ to achieve degree convergence. Fixing a number of vertices $n\geq 1$, define $n_1=\lfloor \alpha n\rfloor$ and $n_2=n-n_1$. One then defines a partition of $[0,1]$ using the points
\[ 
    x_i= \begin{cases}
        \alpha \bigg(\frac{i - 1}{n}\bigg) & i = 1,\dots, n_1, \\
        \alpha + (1 - \alpha)\bigg(\frac{i - n_1 - 1}{n}\bigg) & i = n_1 + 1, \dots, n.
    \end{cases}
\]
Notice that this discretizes the subintervals $[0,\alpha)$ and $[\alpha,1]$ separately with a potentially different step size for the different subintervals. However, this discretization allows for the construction of random graphs and associated step-graphons which will converge (with high probability) to the bipartite graphon; in both the cut norm and uniformly in the degree function. 

\begin{figure}
    \centering
    \includegraphics[width = 0.9\textwidth]{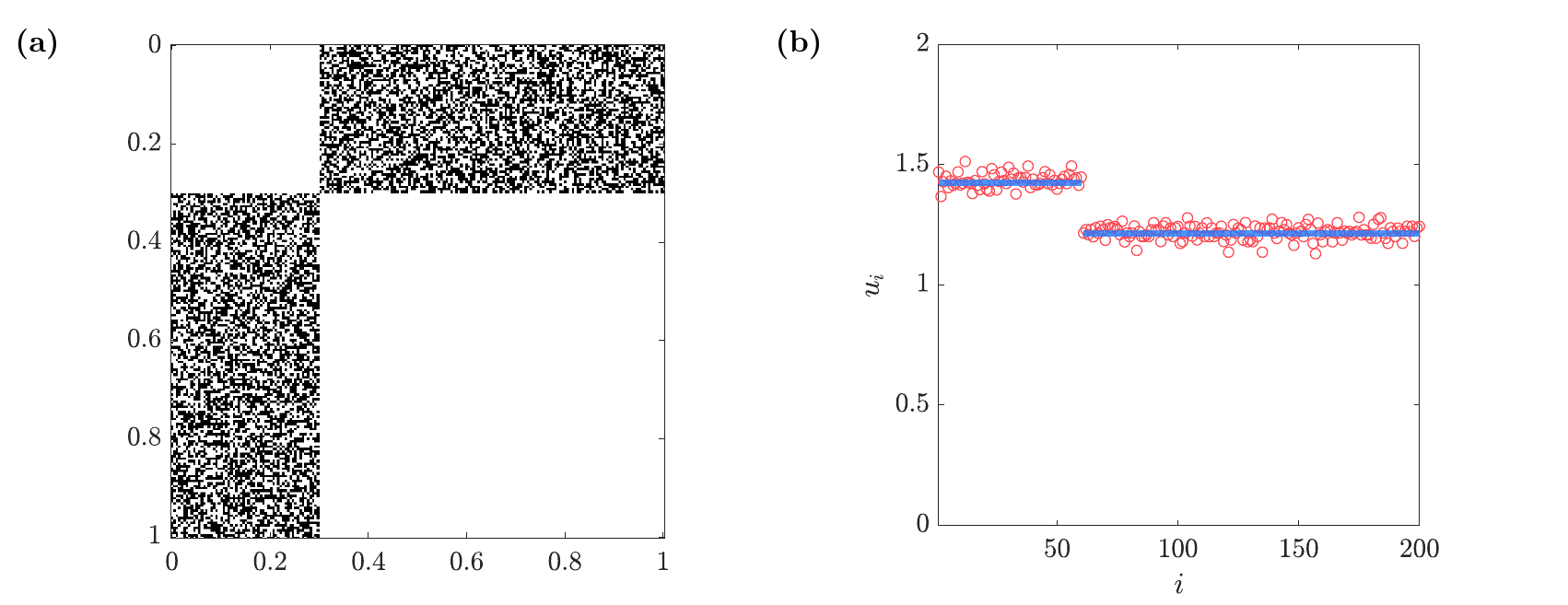}
    \caption{(a) A pixel plot for a random realization of a bipartite graph on 200 vertices generated from the graphon \eqref{eq:bipartiterepeat} with $(p,\alpha)=(0.5,0.3)$. (b) The steady-state solution from Lemma~\ref{lem:LVBipart} (blue, solid) compared to the steady-state solution to the discrete system \eqref{eq:finitemutual} on the random bipartite random graph from the left panel (red, dots).}
    \label{fig:LVbipartite}
\end{figure}

The other two outstanding issues pertain to the jump discontinuity at $x = \alpha$ in both the bipartite graphon and the steady-state $u^*(x)$ in Lemma~\ref{lem:LVBipart}. To circumvent this we could instead replace statements and assumptions using the Banach space $C[0,1]$ with the Banach space
\[ 
    Y=\left\{ u\in L^\infty[0,1] \  \bigg|  \ u(x) \ \text{is continuous on $[0,\alpha)$ and $[\alpha,1]$} \right\}. 
\]
Essentially this Banach space would break the analysis up into two pieces, one using continuous functions on the interval $[0,\alpha)$ and the other using continuous functions on $[\alpha,1]$. Since the bipartite graphon is continuous on each of these subintervals, Hypothesis~\ref{hyp:Graphon}(2) could be verified independently for each subinterval. Thus, we  anticipate that our results could be recovered by breaking the problem into two in this way, although we do not pursue this analysis here. In Figure~\ref{fig:LVbipartite} we present a numerical computation of a steady-state solution for \eqref{eq:finitemutual} defined on a bipartite graph as compared to the graphon equilibrium which provides further evidence for our hypothesis. Similar generalizations should be able to be made for a graphon with any finite number of jump discontinuities.

\section{Discussion}\label{sec:discussion}

Reaction-diffusion equations over networks of the form \eqref{eq:main} provide a general framework to model a number of systems of applied interest. Many realistic systems require complex interaction matrices $A$ that may only be described in a probabilistic sense, thus often making general statements about the system difficult. To avoid case-by-case investigations of these discrete reaction-diffusion systems it is common to (formally) let the number of nodes tend to infinity and study the associated mean-field equations, generally taking the form \eqref{eq:main2}. Working with the spatially continuous limiting problem is often easier (see Section~\ref{sec:examples} for a few examples) and it provides only one equation as opposed to the numerous variations possible for the choices of interaction matrices $A$ within a certain class. However, after completing the study of the mean-field equation \eqref{eq:main2} one is still tasked with leveraging this information to say something about the behavior of the finite-dimensional system \eqref{eq:main} that initiated the investigation, at least for sufficiently large networks.  

The purpose of this research study was to demonstrate that important structures of the mean-field equations provide the existence of related structures in large discrete systems that are close to the limiting problem. Importantly, these results allow one to study a single infinite-dimensional problem to almost surely provide information about networked reaction-diffusion equations with large random interaction matrices. In our case the structures of interest were steady-states and we used the now well-developed theory of graphons to arrive at our results. These results are complementary to related investigations in \cite{medvedev14} that leverage graphon theories to guarantee finite-time proximity of solutions between finite networked dynamical systems and their limiting mean-field graphon equation.

There are many possible extensions we believe are immediately amenable to our methods. First, while we focused only on systems of scalar equations, there is little doubt that these methods can be adapted to apply to settings where there are multiple dependent variables for each node $n$. Second, we sketched out in Section~\ref{sec:bipartite} that while our compactness assumption Hypothesis~\ref{hyp:Graphon}(2) does not hold for bipartite graphons, our approach can be adapted to handle these graphons as well. Thus, it seems that a relatively minor variation of our hypotheses could incorporate bipartite, stochastic block model, and various other graphons that arise in application which do not satisfy our assumptions in their current form. Third, we could have considered inhomogeneous reaction functions, i.e. state-dependent reaction terms of the form $f_i(u_i)$ for each $i$.

Beyond these extensions, there remains much larger questions that we wish to pursue in follow-up investigations. For example, there are now two notable extensions of graphons, graphops \cite{gkogkas2022graphop2} and embedded vertexon-graphons \cite{caines2022embedded}, that are able to capture more diverse graph structures. Thus, formulating our theory in terms of one or both of these graphon extensions would significantly extend the applicability of our results. In terms of dynamical assumptions, the most obvious generalization is to cases where $DF(u^*)$ fails to be invertible due to the existence of an isolated zero eigenvalue of finite multiplicity. Such investigations would likely include a parameter dependence in \eqref{eq:main} to unfold various bifurcations in the mean-field limit and provide insight into the behavior of the nearby finite-dimensional systems, similar to what was done in \cite{bramburger2021pattern}. Cases where the operator $DF(u^*)$ fails to be invertible due to the essential spectrum including 0 are more challenging and determining stability or instability without knowing particular information about the discrete network structure may not be possible, as illustrated in \cite{Nonex}.

\section*{Acknowledgements}  The research of MH and JW was partially supported by the National Science Foundation through DMS-2007759. JB was supported by an NSERC Discovery Grant.


\appendix

\section{Verifying Hypothesis~\ref{hyp:Graphon}(2) for ring graphons}

At first glance, it may appear that Hypothesis~\ref{hyp:Graphon}(2) is difficult to verify if given a graphon. Here we show that this assumption can always be shown to hold for ring graphons. We begin with their definition.

\begin{defn}
    A graphon $W:[0,1]\times [0,1] \to [0,1]$ is said to be a {\bf ring graphon} if there exists a function $R:[0,1] \to [0,1]$ which is 1-periodic, piecewise continuous, and satisfies $W(x,y) = R(|x - y|)$ for all $x,y \in [0,1]$.
\end{defn}

Notable examples of ring graphons are Erd\"{o}s--R\'eyni and small-world graphons, while bipartitie graphons are not rings. We now demonstrate that Hypothesis~\ref{hyp:Graphon}(2) holds for ring graphons.

\begin{lem}\label{lem: 1 disc ring case}
    Let $W$ be a ring graphon. Then for any $\varepsilon > 0$, there exists a $\delta > 0$ so that for every $x_0 \in [0,1]$ it holds that
    \[
         \int_0^1|W(x_0,y) - W(x,y)| \mathrm{d}y < \varepsilon
    \]
    when $|x - x_0| < \delta$, where $|\cdot|$ is taken modulo 1 by the periodicity of the ring graphon.
\end{lem}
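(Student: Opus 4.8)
The key point is that a ring graphon $W(x,y) = R(|x-y|)$ with $R$ one-periodic and piecewise continuous has the property that the map $x \mapsto W(x,\cdot)$ is \emph{uniformly continuous in the $L^1$ norm}, because translating $x$ by a small amount simply translates the function $R$ along the circle. The plan is to reduce the statement to the claim that $\|R(\cdot + h) - R(\cdot)\|_{L^1(\mathbb{R}/\mathbb{Z})} \to 0$ as $h \to 0$, i.e. continuity of translation in $L^1$ on the circle, which is a standard fact.

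First I would make the change of variables explicit: for fixed $x_0, x \in [0,1]$,
\begin{equation*}
    \int_0^1 |W(x_0,y) - W(x,y)|\,\mathrm{d}y = \int_0^1 |R(|x_0 - y|) - R(|x - y|)|\,\mathrm{d}y.
\end{equation*}
Since $R$ is $1$-periodic and even in the sense that $R(|t|)$ only depends on $t$ modulo $1$ up to the folding $t \mapsto |t|$, but more usefully since $W(x,y) = R(|x-y|)$ can be rewritten using a genuinely $1$-periodic function $\tilde R$ on $\mathbb{R}/\mathbb{Z}$ with $\tilde R(t) = R(|t|)$ for $t \in [-1/2,1/2]$ extended periodically (note $\tilde R(t) = \tilde R(-t)$), we get $W(x,y) = \tilde R(x - y)$. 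Then the integral becomes, after substituting $s = x_0 - y$ (and using periodicity to keep the domain an interval of length $1$),
\begin{equation*}
    \int_0^1 |\tilde R(s) - \tilde R(s - (x - x_0))|\,\mathrm{d}s = \|\tilde R - \tau_{h}\tilde R\|_{L^1(\mathbb{R}/\mathbb{Z})},
\end{equation*}
where $h = x - x_0$ and $\tau_h$ denotes translation. Crucially, the right-hand side depends only on $h$ and not on $x_0$, which is exactly what is needed for the \emph{uniform-in-$x_0$} conclusion.

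Next I would invoke continuity of translation in $L^1$: since $\tilde R \in L^1(\mathbb{R}/\mathbb{Z})$ (it is bounded and measurable on a finite-measure space), for any $\varepsilon > 0$ there is $\delta > 0$ so that $\|\tilde R - \tau_h \tilde R\|_{L^1} < \varepsilon$ whenever $|h| < \delta$. This is a classical result proved by approximating $\tilde R$ by a continuous function (or a step function) in $L^1$, for which translation continuity is elementary, and then using a $3\varepsilon$ argument together with the fact that $\|\tau_h\|_{L^1 \to L^1} = 1$. Combining this with the identity above gives $\int_0^1 |W(x_0,y) - W(x,y)|\,\mathrm{d}y < \varepsilon$ for all $x_0$ whenever $|x - x_0| < \delta$, completing the proof.

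The only mild subtlety — and the one place to be careful rather than a genuine obstacle — is the bookkeeping around the absolute value $|x - y|$ versus a clean periodic shift: one must check that $W(x,y)$ really does extend to a function of the single variable $x - y$ modulo $1$, i.e. that the folding built into $R(|x-y|)$ is consistent with $1$-periodicity. This works because $R$ is assumed $1$-periodic, so $R(|x-y|) = R(|x-y| \bmod 1)$ and the even extension $\tilde R$ is well-defined; after that the argument is purely the standard $L^1$-translation-continuity fact and carries no real difficulty.
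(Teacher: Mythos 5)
Your overall strategy---rewrite $W(x,y)$ as $\tilde R(x-y)$ for a single $1$-periodic $\tilde R$ and invoke continuity of translation in $L^1(\mathbb{R}/\mathbb{Z})$---is clean and would work, but the one step you flagged as a ``mild subtlety'' is in fact where the argument has a real gap as written. You justify the identity $W(x,y)=\tilde R(x-y)$ by saying it ``works because $R$ is assumed $1$-periodic.'' It does not: for $x-y\in(1/2,1]$ your periodic even extension gives $\tilde R(x-y)=R(1-(x-y))$, whereas $W(x,y)=R(x-y)$, so you need the additional symmetry $R(t)=R(1-t)$ on $[0,1]$ (equivalently, that the periodic extension of $R$ is even). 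This is not a consequence of ``$1$-periodic and piecewise continuous'': take $R=\mathbb{1}_{[1/4,1/2)}$ extended periodically. Then $W=R(|x-y|)$ is a legitimate graphon under the stated definition, but $W(\varepsilon,\cdot)$ and $W(1-\varepsilon,\cdot)$ are indicators of nearly disjoint intervals, so the $L^1$ difference stays near $1/2$ even though the mod-$1$ distance between $\varepsilon$ and $1-\varepsilon$ is tiny --- i.e.\ without the symmetry both your reduction and the mod-$1$ form of the conclusion fail. The fix is to note that the ring graphons the paper actually works with are exactly those admitting the Fourier representation $\sum_k c_k e^{2\pi\mathbf{i}k(x-y)}$ with $c_k=c_{-k}$, which are precisely even $1$-periodic functions of $x-y$; once you assume $R(t)=R(1-t)$, your change of variables, the $x_0$-independence of $\|\tilde R-\tau_h\tilde R\|_{L^1}$, and the standard $3\varepsilon$ approximation argument all go through.

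It is also worth noting that your route is genuinely different from the paper's. The paper proves the lemma by induction on the number of jump discontinuities of $R$: the base case uses uniform continuity, and each inductive step excises a $\delta$-neighborhood of one discontinuity (contributing $2|l-r|\delta$) and applies the inductive hypothesis on the two remaining pieces. That argument manipulates $R(|x_0-y|)$ and $R(|x-y|)$ directly via the reverse triangle inequality and so does not need the translation structure when $|x-x_0|$ is the ordinary absolute value. Your argument, once the symmetry is in place, is shorter and strictly more general in the regularity it requires ($\tilde R\in L^1$ suffices, piecewise continuity is irrelevant), and it delivers the $x_0$-uniformity for free since the bound depends only on $h=x-x_0$; the price is that it leans entirely on the circle-translation structure that the paper's induction avoids.
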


\begin{proof}
    First, by definition since $W$ is a ring graphon, there exists a 1-periodic, piecewise continuous function $R$ so that $W(x,y) = R(|x - y|)$. This proof is then carried out by induction on the number of jump discontinuities in the function $R$.

    Let us begin by assuming that $R$ is continuous. That is, $R$ has 0 jump discontinuities. Continuity of $R:[0,1] \to [0,1]$ further implies uniform continuity since $[0,1]$ is compact, and the result follows immediately from this. Thus, the base case of $k = 0$ jump discontinuities holds.

    Now, let us assume that if $R$ has $k \in \mathbb{N} \cup \{0\}$ or less jump discontinuities the lemma holds. We now consider the case that there are $k + 1$ jump discontinuities in $R$. Let $\xi \in [0,1]$ be the location of one such jump discontinuity. Without loss of generality we can assume that $\xi \in(0,1)$ since $R$ is 1-periodic. Then, $R$ restricted to the sub-intervals $[0,\xi]$ and $[\xi,1]$ has $k$ or fewer jump discontinuities. Therefore, by our inductive hypothesis we have that for all $\varepsilon > 0$ there is a $\delta_1 > 0$ so that for all $x_0 \in [0,1]$ we have
    \[
        \int_0^{\xi - \delta_1}|R(|x_0 - y|) - R(|x - y|)| \mathrm{d}y < \frac{\varepsilon}{3}, 
    \]
    when $|x_0 - x| < \delta_1$ and $|x_0 - y| < \xi$, and 
    \[
        \int_{\xi+\delta_1}^1|R(|x_0 - y|) - R(|x - y|)| \mathrm{d}y < \frac{\varepsilon}{3}, 
    \]
    when $|x_0 - x| < \delta_1$ and $|x_0 - y| > \xi$.
    Furthermore, letting 
    \[
        l = \lim_{x \to \xi^-} R(x), \quad r = \lim_{x \to \xi^+} R(x),
    \]
    we further have that there exists a $\delta_2 > 0$ which guarantees that $2|r - l|\delta_2< \varepsilon/3$. Thus, setting $\delta = \min\{\delta_1,\delta_2,\frac{1-\xi}{2},\frac{\xi}{2}\}$, it follows that for all $|x_0 - x| < \delta$ we have
    
    \[
        \begin{split}
            \int_0^1|W(x_0,y) - W(x,y)|\, \text{d}y &= \int_0^1|R(|a-y|) - R(|x-y|)|\, \text{d}y \\
            &\leq \int_0^{\xi-\delta}|R(|x_0-y|) - R(|x-y|)|\, \text{d}y \\ 
            &+ \int_{\xi+\delta}^1|R(|x_0-y|) - R(|x-y|)|\, \text{d}y + 2|l-r|\delta\\
            &< \frac{\varepsilon}{3} + \frac{\varepsilon}{3} + \frac{\varepsilon}{3}\\
            &= \varepsilon,
        \end{split}   
    \]
    where we have used the fact that $|W(x_0,y) - W(x,y)| \leq 1$ for all $x_0,x,y \in [0,1]$. Thus, we have proven the inductive step and completed the proof.
\end{proof}

\section{Proof of Lemma~\ref{lem:MinvProjected}}\label{app:BigProof}

In this appendix we provide the proof of Lemma~\ref{lem:MinvProjected}. The proof is broken down into components that correspond to the items enumerated in the statement of the lemma.  We remark that the continuity of the functions $\psi_{j,k}(y)$ in the spectral projections is essential to our proof of Theorem~\ref{thm:existence}.  We suspect this fact is perhaps already known in the literature, but absent a suitable reference and for the benefit of the reader we provide a full proof here in part (3) below.  

(1)\ By the definition \eqref{eq:TK}, $T_K$ is given by 
\[ 
    T_Kv=\frac{1}{Q(x)}\int_0^1 W(x,y)D_2(u^*(x),u^*(y))v(y)\mathrm{d}y.
\]
Since $Q(x)$ is continuous, nonzero, and positive, we have that compactness of $T_K:X_n \to X_n$ will follow from compactness of the operator 
\[ 
    X_n \ni v \mapsto \int_0^1 W(x,y)D_2(u^*(x),u^*(y))v(y)\mathrm{d}y,
\]
which was proven to be compact in the proof of Lemma~\ref{lem:fredholm}. Thus, $T_K : X_n \to X_n$ for all $n \geq 1$ is compact, proving point (1).

(2)\ Since we have now shown that $T_K:X_n \to X_n$ is compact, the existence of only a finite number of eigenvalues with real part greater than or equal to one is an immediate corollary. Compactness further implies that each eigenvalue has a finite algebraic multiplicity. An analogous argument as performed in Lemma~\ref{lem:Spectrum} shows that the spectrum of $T_K$ is independent of the space $X_n$ and the corresponding eigenfunctions and generalized eigenfunctions are all continuous.  

(3) \ Let $J \geq 0$ be the number of eigenvalues of $T_K$ with real part greater than or equal to 1. Let $m_j \geq 1$ be the multiplicity of the $j$th eigenvalue, denoted $\lambda_j \in \mathbb{C}$, for each $j = 1,\dots, J$.  For any fixed $n \geq 1$, let $P_{\lambda_j}:X_n \to X_n$ be the spectral projection associated to $\lambda_j$ and define
\[ 
    P=\sum_{j=1}^J P_{\lambda_j}, \quad \tilde{P}=I-P.
\]
Then the eigenvalues $\lambda_j$ and their generalized eigenspaces decompose $X_n$ as
\begin{equation}\label{eq:XnOplus}
    X_n=X_{\lambda_1}\oplus X_{\lambda_2}\oplus \dots \oplus X_{\lambda_J}\oplus \tilde{X}_n, 
\end{equation}
where each $X_{\lambda_j} = P_{\lambda_j}X_n$ is finite-dimensional.  As a result of point (2) we have that $X_{\lambda_j}\subset C[0,1]$ for all $j$.

We now focus on a specific $\lambda_j$ and construct the spectral projection associated to this eigenvalue.  We work first in the space $C[0,1]$ and then show that this expression remains valid on the larger space $X_n$.

To condense notation, we will set $L_K := I-T_K$. Since $\lambda_j \neq 1$ for all $ j = 1,\dots, J$ we have that the multiplication operator $v\to (1-\lambda_j) v$ is invertible and hence Fredholm with index zero. Then, since $T_K$ is compact, it further follows that $L_k-\lambda_jI$ is also Fredholm with index zero.  Identical arguments can be applied to show that the operator $(L_K-\lambda_jI)^{m_j}$ is also Fredholm with index zero.  Consequently, since $m_j$ is the algebraic multiplicity of the eigenvalue $\lambda_j$ there necessarily exists $m_j$, linearly independent functions spanning $\mathrm{ker}\left(( L_K-\lambda_jI)^{m_j}\right)$. These (generalized) eigenfunctions which we denote $\{\varphi_{j,k}(x)\}_{k = 1}^{m_j}$ span $X_{\lambda_j}$.  Recall again that  $\varphi_{j,k}\in C[0,1]$ from point (2).

Next, since $(L_K-\lambda_jI)$ is Fredholm index zero then $\mathrm{dim}\mathrm{ker}\left((L_K^*-\lambda_j I)^{m_j}\right)=m_j$, where the dual operator $L_K^*$ acts on the dual space $C[0,1]^*$.  The Riesz Representation Theorem \cite[Theorem~36.6]{kolmogorov75} guarantees that for each $\theta_\alpha\in C[0,1]^*$ there exists an $\alpha\in BV[0,1]$ such that $\theta_\alpha$ can be expressed as the integral 
\[ 
    \theta_\alpha(v)=\int_0^1 v(x)\mathrm{d}\alpha(x). 
\]
The above representation of the dual space of $C[0,1]$ together with the identity $\theta_\alpha [L_K(v)]=L^*_K[\theta_\alpha](v)$ then implies that 
\begin{equation} \label{eq:LKstar}  
    L_K^*[\theta_\alpha](v)=\int_0^1 v(x)\mathrm{d}\alpha(x)-\int_0^1 \int_0^1 K(x,y)v(y)\mathrm{d}y\mathrm{d}\alpha(x), 
\end{equation}
where $K(x,y) = W(x,y)D_2(u^*(x),u^*(y))$.  Since $W(x,y)$ is non-negative and $|D_2(u^*(x),u^*(y))|$ is continuous it follows from nearly identical arguments to those in Corollary~\ref{cor:Qcont} that $x \mapsto \int_0^1 |K(x,y)v(y)|\mathrm{d}y$ is continuous as  function of $x$.  Then $\int_0^1 \int_0^1 |K(x,y)v(y)|\mathrm{d}y\mathrm{d}\alpha(x)$ exists and Fubini's Theorem \cite[Theorem~35.4]{kolmogorov75} allows one to switch the order of integration in \eqref{eq:LKstar}. Thus, \eqref{eq:LKstar} becomes 
\[ 
    L_K^*[\theta_\alpha](v)=\int_0^1 v(x)\mathrm{d}\alpha(x)-\int_0^1 v(y)  \int_0^1 K(y,x)\mathrm{d}\alpha(y)\mathrm{d}x  
\]
for an arbitrary $v \in C[0,1]$ .

Next, for $\theta_\alpha$ to be an element of $\mathrm{ker}(L_K^*-\lambda_jI)$ it must hold that
\begin{equation}\label{eq:LKkerneldef} 
    0 = (L_K^*-\lambda_jI)\theta_\alpha (v) = (1-\lambda_j) \int_0^1v(x)\mathrm{d}\alpha(x) -\int_0^1 v(y)  \int_0^1 K(y,x)\mathrm{d}\alpha(y)\mathrm{d}x, 
\end{equation}
for all $v\in C[0,1]$.  It therefore holds that  there exists a function $\psi_\alpha(x)$ such that $\theta_\alpha$ admits the representation 
\[ 
    \theta_\alpha v= \int_0^1 v(x)\psi_\alpha(x) \mathrm{d}x .
\]
Putting this together with \eqref{eq:LKkerneldef} implies that this function $\psi_\alpha$ must also satisfy 
\begin{equation} \label{eq:toshowpsicont}
    (1-\lambda_j) \psi_\alpha(x)-\int_0^1 K(y,x)\psi_\alpha(y)\mathrm{d}y =0. 
\end{equation}
Since $W(x,y)=W(y,x)$, we use Hypothesis \ref{hyp:Graphon}-2 and repeat the compactness argument in Lemma~\ref{lem:fredholm} culminating in (\ref{eq:psicont}) to conclude that  $\int_0^1 K(y,x)\psi_\alpha(y)\mathrm{d}y$ is a continuous function of $x$.  From (\ref{eq:toshowpsicont}) we then obtain that
\[ \psi_\alpha(x)=\frac{1}{1-\lambda_j}\int_0^1 K(y,x)\psi_\alpha(y)\mathrm{d}y\]
and therefore $\psi_\alpha(x)\in C[0,1]$.

We now construct the spectral projection $P$ in \eqref{eq:P}.  If $m_j=1$, we will show that there exist $\varphi_j\in \mathrm{ker}(L_K-\lambda_jI)$ and a $\psi_j\in C[0,1]$ such that 
\[ 
    P_{\lambda_j} v=\int_0^1 \varphi_j(x) v(y)\psi_j(y) \mathrm{d} y. 
\]
We require $P_{\lambda_j} \varphi_j=\varphi_j$ while $P_{\lambda_j} \left[(L_K-\lambda_j) w\right]=0$ for all $w\in C[0,1]$.  The first condition requires 
\begin{equation} \label{eq:int1}
    \int_0^1 \varphi_j(y)\psi_j(y)\mathrm{d}y =1,  
\end{equation}
while the second requires 
\[ 
    \int_0^1 \left( (1-\lambda_j)w(y)-\int_0^1 K(y,z) w(z)\mathrm{d}z \right) \psi_j(y)  \mathrm{d}y =0,
\]
for any $w\in C[0,1]$.  For the second condition, since $w$ and $\psi_j$ are continuous we can change the order of integration so that this condition assumes the form
\[ 
    \int_0^1 w(y) \left( (1-\lambda_j)\psi_j(y)-\int_0^1 K(z,y) \psi_j(z)\mathrm{d}z \right) \mathrm{d}y=0,
\]
which is satisfied for all $w\in C[0,1]$ if $\psi_j$ is chosen to be the unique (since we are assuming momentarily that the $\mathrm{dimker}(L_K^*-\lambda_jI)=1$), up to scalar multiplication, function that  satisfies  \eqref{eq:toshowpsicont}. The integral on the left hand side of \eqref{eq:int1} is always non-zero as otherwise the operator $v \mapsto \int_0^1 v(y)\psi_j(y)\mathrm{d}y \in C[0,1]^*$ would be trivial since we can decompose $C[0,1]=\mathrm{span}\{\varphi_j\}\oplus \mathrm{Rng}\{ L_k-\lambda_jI\}$. The condition \eqref{eq:int1} effectively selects a unique scalar multiple of the  function $\psi_j(x)$.

We now consider the case where the algebraic multiplicity of $\lambda_j$ exceeds one, i.e. the case where $m_j>1$. Since $\mathrm{dimker}\left((L_K^*-\lambda_j)^{m_j}\right)=m_j$, one can verify by a similar argument as to the case of $m_j=1$ that there exists $m_j$ linearly independent functions $\{\psi_{j,k}\}_{k = 1}^{m_j}\subset C[0,1]$ such that the bounded linear functionals
\[ 
    v \mapsto \int_0^1 v(y) \psi_{j,k}(y)\mathrm{d}y 
\]
span the kernel of $(L_K^*-\lambda_jI)^{m_j}$.

Since $X_{\lambda_j}$ is finite-dimensional, there exist $g_{j,k}\in C[0,1]^*$, $k = 1,\dots,m_j$, so that the spectral projection onto $X_{\lambda_j}$ will therefore take the form
\[ 
    P_{\lambda_j}v= \sum_{k=1}^{m_j} g_{j,k}[v] \varphi_{j,k}(x). 
\]
Furthermore, each $g_{j,k}\in C[0,1]^*$ can be represented as a linear combination of the $m_j$ functionals spanning $\mathrm{ker}\left((L_K^*-\lambda_jI)^{m_j}\right)$:
\[ 
    g_{j,k}[v]=\int_0^1 v(y) \left(\sum_{l=1}^{m_j} b_{k,l}\psi_{j,l}(y)\right)\mathrm{d}y,
\]
for some constants $b_{k,l}$.  Each $g_{j,k}$ must satisfy 
\begin{equation}
     v\in \mathrm{Rng}\left(\left(L_K-\lambda_jI\right)^{m_j}\right) \implies g_k[v]=0
\end{equation}
and $g_{j,k}[\varphi_{j,l}]=\delta_{kl}$ for all $1\leq k,l\leq m_j$.

It therefore remains to uniquely solve, for each $k$, the $m_j$ conditions $g_k[\varphi_{j,l}]=\delta_{kl}$ for the $m_j$ coefficients $b_{k,l}$.  Solvability of this system of equations relies on the invertibility of a Gram-like matrix which follows from the linear independence of the $\varphi_{k,j}(x)$ and $\psi_{j,l}(y)$.  With this solution we can define $\tilde{\psi}_{k,l}(y)=\sum_{l=1}^{m_j} b_{k,l}\psi_{j,l}(y)$ and then drop the tildes so that the spectral projection assumes the form written in \eqref{eq:P}.

The spectral projection formula in \eqref{eq:P} has been derived for $v\in C[0,1]$, but the same operator describes the projection for $v\in X_n$, for any $n \geq 1$.  To see this, let $v\in X_n$ and consider $w=(L_K-\lambda_j)v$. Then we have 
\begin{equation} \begin{split}
    g_{j,k}[w]&=\int_0^1[(L_K-\lambda_j)v] \psi_{j,l}(y)\mathrm{d}y \\
    &= \int_0^1 (1-\lambda_j)v(y)\psi_{j,l}(y)\mathrm{d}y- \int_0^1 \int_0^1 K(y,z)v(z)\mathrm{d}z \psi_{j,l}(y)\mathrm{d}y\\
    &= \int_0^1 (1-\lambda_j)v(y)\psi_{j,l}(y)\mathrm{d}y- \int_0^1 v(y)  \int_0^1 K(z,y)\psi_{j,l}(z)\mathrm{d}z \mathrm{d}y \\
    &= \int_0^1 v(y)  \left((1-\lambda_j)\psi_{j,l}(y) -\int_0^1 K(z,y)\psi_{j,l}(z)\mathrm{d}z \right)\mathrm{d}y, 
\end{split} \end{equation}
where we have used Fubini's theorem to switch the order of integration in the second to last line, owing to the fact that $y \mapsto \int_0^1 K(y,z)v(z)\mathrm{d}z$ is continuous in $y$ for $v\in X_n$.  The final line implies that $g_{j,k}[(L_K-\lambda_j)v]=0$ for all $v\in X_n$.  A similar argument applies to generalized eigenfunctions. Since $X_{\lambda_j}\subset C[0,1]$ the range of $P_{\lambda_j}$ is again $X_{\lambda_j}$ yielding the spectral projection. This concludes the proof of point (3).

(4)\ We now return to the problem of inverting $DF(u^*)$ and obtaining the formula \eqref{eq:Minvgen}. Let $v \in X_n$ and set $w = DF(u^*)v$. Our goal is to obtain an expression for $v$ in terms of $w$. 

We begin by dividing $DF(u^*)v=w$ by the non-zero function $-Q(x)$ to reduce the problem of inverting $DF(u^*)$ to that of solving 
\[ 
    (I-T_K) v=-\frac{w}{Q(\cdot)}. 
\]
Recall the decomposition of the space $X_n$ into a finite sum of invariant subspaces.  Write
\begin{equation} 
\begin{split}
    v&=v_1+v_2+\dots v_J+\tilde{v}  \\
    -\frac{w}{Q(\cdot)}&=w_1+w_2+\dots +w_J+\tilde{w},
\end{split}
\end{equation}
where $v_j=P_{\lambda_j}v\in X_{\lambda_j}$, $w_j=P_{\lambda_j}(-\frac{w}{Q}) \in X_{\lambda_j}$, $\tilde{v}=\tilde{P}v\in \tilde{X}_n$ and $\tilde{w}\in \tilde{X_n}$.  Owing to invariance of these subspaces, we can invert $I-T_K$ on all of $X_n$ by inverting the operator restricted to each subspace.  

Consider first $X_{\lambda_j}$.  We must solve  $(I-T_K)v_j=w_j$.  But $X_{\lambda_j}$ is simply the span of the eigenfunctions and generalized eigenfunctions of $T_K$ associated to the eigenvalue $\lambda_j$.  Therefore $(I-T_K)v_j=(1-\lambda_j)v_j+N_{\lambda_j}[v_j]$ where the linear operator  $N_{\lambda_j}:X_{\lambda_j}\to X_{\lambda_j}$ is nilpotent.  This means that we can solve $(I-T_K)v_j=w_j$ for any $w_j\in X_{\lambda_j}$ by reduction to a finite-dimensional problem.  In particular we can write $v_j=\sum s_{j,k} \varphi_{j,k}(x)$ and $w_j=\sum r_{j,k}\varphi_{j,x}(x)$.  Note that $r_{j,k}=g_{j,k}\left[-\frac{w}{Q(\cdot)}\right]$.  Invertibility within this subspace implies that we can write each $s_{j,k}$ as a linear combination of the $r_{j,k}$. This implies that there exist coefficients $c_{j,k,l}$ such that 
\[ 
    v_j=\sum_{k,l=1}^{m_j} \varphi_{j,k}(x)\int_0^1 c_{j,k,l} \psi_{j,l}(y)\left(\frac{-w(y)}{Q(y)}\right)\mathrm{d}y. 
\]
Repeating this procedure over all $v_j$ we obtain the second expression in (\ref{eq:Minvgen}).

It remains to invert $(I-T_K)$ restricted to the invariant subspace $\tilde{X}_n$.  The spectrum of $T_K$ restricted to this subspace lies strictly to the left of the line $\mathrm{Re}(\lambda)=1$ by construction.  By point (2), the spectrum of $T_K$ is independent of the space $X_n$ and therefore the spectral radius of $T_K$, restricted to $\tilde{X}_n$ is independent of $n$.  Thus, there exists a constant $\xi>0$, independent of $n$ such that $\left.\sigma\left(T_K\right)\right|_{\tilde{X}_n}$ is contained inside a ball centered at $-\xi$ with radius less than $\xi+1$.  
 Then the spectral radius of the rescaled operator $\frac{\xi+T_K}{1+\xi}$, restricted to $\tilde{X}_n$ is strictly less than one and therefore by re-arranging 
\begin{equation}
    (I-T_K)=(I+\xi)-(\xi+T_K)=(1+\xi)\left(I-\frac{T_K+\xi}{1+\xi}\right), \label{eq:Nuemannrescaling}
\end{equation}
then we can solve $(1-T_K)\tilde{v}=\tilde{w}$ using Neumann series.  Writing 
\[ \tilde{w}=\tilde{P}\left(\frac{-w}{Q(\cdot)}\right) \]
the Neumann series provides the first term in the summation on the right-hand-side of \eqref{eq:Minvgen}. Thus, we have proved point (4) of the lemma. 

\section{Details for the Kuramoto Model}\label{sec:KuramotoApp}

In Section~\ref{sec:Kuramoto} we commented that to apply Theorem~\ref{thm:existence} to twisted states in the Kuramoto model one is required to quotient out the translational symmetry of the model. We proposed that the analysis undertaken in this work can be generalized to the Kuramoto model by replacing the function space $X_n$ with a subspace $Y_n$ of mean-zero functions. We now provide the necessary details that substantiate our statements.

First we show that $DF(u^*)$ is invertible on $Y_n$. Linearizing the Kuramoto model about a twisted state $u^*(x) = m (x - 1/2)$ with any $m \in \mathbb{Z}$ yields
\begin{equation}
    DF(u^*)v=-2\pi c_m v +2\pi \int_0^1 W(x,y) \cos(2\pi m(y-x)) v(y)\mathrm{d}y. 
\end{equation}
In Section~\ref{sec:Kuramoto} we have assumed that all elements of the spectrum of $DF(u^*)$ are negative aside from the isolated eigenvalue of algebraic multiplicity one at $\lambda=0$ whose eigenspace is spanned by constant functions. This zero eigenvalue comes exactly from the translational symmetry. To invert $DF(u^*)$ we have $DF(u^*)v=w$, requiring one to solve
\begin{equation} 
    (I-T_K)v=-\frac{w}{2\pi c_m},
\end{equation}
where in the case of the Kuramoto model we have
\begin{equation}
    T_K=\frac{1}{c_m} \int_0^1 W(x,y)\cos(2\pi m (y-x))v(y)\mathrm{d} y, 
\end{equation}
which is a compact as an operator on $Y_n$. For a spectrally stable twisted state solution of the Kuramoto model, the spectrum of $T_K$ is real and assumes values strictly less than one with the exception an isolated eigenvalue at $\lambda=1$ with constant eigenfunction. The spectral projection onto this eigenfunction is simply $Pv=\int_0^1 v(y)\mathrm{d}y$. Then, 
\begin{equation}
    DF(u^*)v=DF(u^*) P v+DF(u^*)(I-P)v=DF(u^*)(I-P)v.
\end{equation}  
Since the range of $I - P$ is exactly $Y_n$ and the spectrum of $DF(u^*)$ on $Y_n$ is bounded away from zero, it therefore holds that $DF(u^*)$ is invertible on $Y_n$ and can be expressed via Neumann series after perhaps shifting and rescaling the operator in a analogous manner as (\ref{eq:Nuemannrescaling}).  

With the invertibility of $DF(u^*)$ on $Y_n$, we now recall that $\mathcal{T}_n$ is defined by 
\[ 
    \mathcal{T}_n[u]=u-DF(u^*)^{-1} F_n(u). 
\]
We claim that $\mathcal{T}_n$ maps $Y_n$ back into itself. To verify this, we must show that $F_n(u)\in Y_n$ for any $u\in Y_n$. This fact follows from the following calculation 
\begin{equation}
\begin{split}
    \int_0^1 F_n(u)\mathrm{d}x &= \int_0^1\int_0^1 W_n(x,y) \sin (2\pi(u(y)-u(x))) \mathrm{d}y\mathrm{d}x \\
    &=  -\int_0^1\int_0^1 W_n(x,y) \sin (2\pi(u(x)-u(y))) \mathrm{d}y \mathrm{d}x \\
    &=  -\int_0^1\int_0^1 W_n(y,x) \sin (2\pi(u(y)-u(x))) \mathrm{d}y\mathrm{d}x \\
    &=  -\int_0^1\int_0^1 W_n(x,y) \sin (2\pi(u(y)-u(x))) \mathrm{d}y\mathrm{d}x \\ 
    &= -\int_0^1 F_n(u) \mathrm{d}x. 
\end{split}
\end{equation}
Therefore, $\int_0^1 F_n(u)\mathrm{d}x = 0$, meaning $F_n(u)$ is mean-zero and is in turn an element of $Y_n$ for any $u\in Y_n$. This makes the mapping $\mathcal{T}_n:Y_n\to Y_n $ well defined and the remainder of the proof proceeds as in the proof of Theorem~\ref{thm:existence}.

\bibliographystyle{abbrv}
\bibliography{references.bib}

\end{document}